\newtheorem{thm}{Theorem}[section]
\newtheorem{cor}[thm]{Corollary}
\newtheorem{lem}[thm]{Lemma}
\newtheorem{prop}[thm]{Proposition}
\theoremstyle{definition}
\newtheorem{defn}[thm]{Definition}
\newtheorem{rem}[thm]{Remark}
\newtheorem*{ack}{Acknowledgement}
\numberwithin{equation}{section}
\numberwithin{figure}{section}
\def\Hom{{\mathrm{\rm{Hom}}}}
\def\End{{\mathrm{\rm{End}}}}
\def\rchi{{\hbox{\raise1.5pt\hbox{$\chi$}}}}
\def\vol{{\mathrm{\rm{vol}}}}
\newcommand{\cat}[1]{{\mathcal{#1}}}
\newcommand{\Nu}{{\mathcal{V}}}
\newcommand{\mF}{{\mathcal{F}}}
\newcommand{\cK}{{\mathcal{K}}}
\newcommand{\cC}{{\mathcal{C}}}
\newcommand{\cA}{{\mathcal{A}}}
\newcommand{\cDA}{{\mathcal{DA}}}
\newcommand{\cO}{{\mathcal{O}}}
\newcommand{\cM}{{\mathcal{M}}}
\newcommand{\mL}{{\mathcal{L}}}
\newcommand{\cE}{{\mathcal{E}}}
\newcommand{\mG}{{\mathcal{G}}}
\newcommand{\sL}{{\mathscr{L}}}
\newcommand{\bid}{{\mathbbm{1}}}
\newcommand{\bP}{{\mathbb{P}}}
\newcommand{\bC}{{\mathbb{C}}}
\newcommand{\bQ}{{\mathbb{Q}}}
\newcommand{\bR}{{\mathbb{R}}}
\newcommand{\bZ}{{\mathbb{Z}}}
\newcommand{\vphi}{{\varphi}}
\newcommand{\pd}{{\partial}}
\newcommand{\ddt}{{\frac{d}{dt}}}
\newcommand{\ddz}{{\frac{d}{dz}}}
\newcommand{\im}{{\mathrm{Im}\,}}
\newcommand{\zeb}{{\overline{\zeta}}}
\newcommand{\ze}{{\zeta}}
\newcommand{\spanc}[1]{\mathrm{span}_\bC\{#1\}}
\newcommand{\ppi}{{\xrightarrow{\pi}}}
\newcommand{\ptr}{{\mathcal{P}}}
\newcommand{\hol}{{Hol}}
\newcommand{\pic}{{\mathrm{Pic}}}
\newcommand{\Gl}{{GL}}
\newcommand{\dfr}[2]{{\displaystyle\frac{#1}{#2}}}
\newcommand{\inv}{{^{-1}}}
\newcommand{\pws}[1]{\begin{cases}#1\end{cases}}
\newcommand{\evat}[1]{{\left.#1\right|}}
\newcommand{\rest}[1]{{\left.#1\right|}}
\newcommand{\stb}{{\: | \:}}
\newcommand{\an}{{\: \text{and} \:}}
\newcommand{\rimp}{{\:\Rightarrow\:}}
\newcommand{\rlimp}{{\:\iff\:}}
\newcommand{\arriso}{{\xrightarrow{\sim}}}
\newcommand{\ex}{{\noindent\emph{Example:  }}}
\newcommand{\ith}{{^\text{th}}}
\newcommand{\fttext}[1]{{\footnotetext{#1}}}
\newcommand{\ftmark}{{\footnotemark}}
\newcommand{\kah}{{K\"{a}hler}}
\newcommand{\reo}{{\mathrm{Re}(\Omega)}}
\newcommand{\imm}{{\looparrowright}}
\newcommand{\ob}{{\mathfrak{Ob}}}
\newcommand{\fm}{{\mathfrak{m}}}
\newcommand{\Fuk}{{Fuk}}
\newcommand{\fuk}{{Fuk}}
\newcommand{\coh}{{\mathrm{Coh}}}
\newcommand{\db}{{\mathcal{D}^b}}
\newcommand{\dbcoh}{{\db\coh}}
\newcommand{\Ainf}{{$A_\infty$}}
\newcommand{\ext}{{\mathrm{Ext}}}
\newcommand{\thu}{{\theta_\tau}}
\newcommand{\thux}{{\theta^{(x)}_\tau}}
\newcommand{\Et}{{E_\tau}}
\newcommand{\Er}{{E^\rho}}
\newcommand{\Ert}{{E^\tau_\rho}}
\newcommand{\Ets}{E^\tau}
\newcommand{\Eq}{E_q}
\newcommand{\Lp}[1]{{\mL_{\varphi_0}^{#1}}}
\begin{document}
\allowdisplaybreaks
\large


\title[An Intro to HMS and the Case of Elliptic Curves]{An Introduction to Homological Mirror Symmetry\\
and the Case of Elliptic Curves}
\author{Andrew Port}
\address{Department of Mathematics, University of California, Davis, CA 95616--8633, U.S.A.}
\email{AndyAPort@gmail.com}  

\date{September 16, 2012}

\begin{abstract}
Here we carefully construct an equivalence between the derived category of coherent sheaves on an elliptic curve and a version of the Fukaya category on its mirror.  This is the most accessible case of homological mirror symmetry.  We also provide introductory background on the general Calabi-Yau case of The Homological Mirror Symmetry Conjecture.
\end{abstract}

\maketitle

\tableofcontents
\setcounter{part}{-1}
\setcounter{section}{-1}

\hbadness = 10000

\section{Introduction}

The concept of mirror symmetry\ftmark{}, in its earliest forms, was conceived in the late 1980s by physicists who observed that topologically distinct manifolds could give rise to equivalent quantum field theories and thus equivalent notions of physics.  
\fttext{The term mirror symmetry was of course used to describe other concepts in mathematics far earlier.  The concept of mirror symmetry studied in this thesis got its name from a symmetry observed in hodge diamonds of certain pairs of manifolds.  Since its naming the field has evolved quite drastically, but the name remains.}  
In 1994, roughly fifteen years after these first observations\footnote{There are many historic accounts of field's evolutions over these fifteen years (e.g. chapter 7  of \cite{yau2010shape}).}, Kontsevich proposed a mathematically rigorous framework for this symmetry based on what is now known as The Homological Mirror Symmetry Conjecture \cite{Ko94}.  The principle conjecture made in \cite{Ko94} can be thought of as a definition of what it means to be the ``mirror dual" to a Calabi-Yau manifold.  This definition (slightly weakened from its original form) can be stated as follows.\\

\emph{Let $X$ be a Calabi-Yau manifold.  A complex algebraic manifold, $\tilde{X}$, is said to be mirror dual to $X$ if the bounded\ftmark{} derived category of coherent sheaves on $\tilde{X}$ is equivalent to the bounded derived category constructed from the Fukaya category of $X$ ``(or a suitable enlargement of it)".}\\
\fttext{We will generally drop the term bounded from ``bounded derived category" as is often done in the context of mirror symmetry.}

The (derived) Fukaya category of $X$ is built from only the \kah{} structure (coupled with a B-field) of $X$, whereas the (derived) category of coherent sheaves on $\tilde X$ depends on only the complex structure of $\tilde X$.  In particular, homological mirror symmetry (HMS) is a relationship between symplectic and algebraic geometry.\footnote{We will throughout this thesis often refer to these two (symplectic and algebro-geometric) sides as the A-side and B-side respectively.  This terminology comes from a string-theoretic interpretation of mirror symmetry being a duality between type IIA and IIB string theory.}  Currently it is ``widely believed" (\cite{FOOO2009} section 1.4) that to each Calabi-Yau manifold, such a mirror dual exists and is also Calabi-Yau.\\

In the case of Calabi-Yau manifolds, as it is stated above, this equivalence (or a close version of it) has been proven completely in the cases of elliptic curves \cite{PZ} and (for $X$ a) quartic surface \cite{SeQuartic}.  Aspects of the conjecture have been proven in other cases, particularly in the cases of abelian varieties and Lagrangian torus fibrations\cite{FuAbel,KoSoTorusFibrations}.  There are also conjectured extensions of HMS to non-Calabi-Yau manifolds (e.g. \cite{ArrAntican, KKOY}), where the mirror dual of a symplectic manifold is taken to be a pair\footnote{The pair $(\tilde X,W)$ is called a \emph{Landau-Ginzberg model} and the function $W$ is called a \emph{Landau-Ginzberg superpotential}.} $(\tilde X, W)$ of a complex variety $\tilde X$ and a holomorphic function $W$.  Complete proofs of this more general HMS exist for all surfaces of genus $g\geq 2$ \cite{SeGenus2, Ef})\footnote{It is possible a complete proof of the genus zero case also exists or is implicit in related works (see \cite{Ballard} for an overview of this case).}\footnote{The author apologizes any complete proofs he is unaware of and also for the many partial results not included in the above references for both the Calabi-Yau and non-Calabi-Yau cases.}\\

%
In this thesis we will concentrate on the Calabi-Yau case of HMS.  We will give an introduction to the subject through the example where $X$ is an elliptic curve.\ftmark{}  This case is particularly accessible thanks to the fact that elliptic curves are particularly well-understood geometric objects and the intention of this document will be to provide enough detail (presented at an introductory enough level) for a geometrically-oriented mathematician to attain a good understanding of this case and its more general underlying conjecture (without the substantial and diverse prerequisites typically associated with HMS).  We will pay particular attention to explanations complementary to those included in \cite{PZ,Kreuss} and we will make effort to give our explanation in terms of the general Calabi-Yau case whenever it is possible to do so without straying too far from our goal of understanding the case of elliptic curves.\\
\fttext{Elliptic curves (i.e. one-dimensional complex tori) are the only one-dimensional Calabi-Yau manifolds.}

In the ICM talk \cite{Ko94} where Kontsevich gave his original HMS conjecture, he also gave a rough comparison of the two sides of this categorical equivalence in the case of elliptic curves.  About five years later Polishchuk and Zaslow gave a proof of this case in a weakened context, which we will describe in this thesis.  To summarize quickly, let $\tau = b+iA\in \bR\oplus i\bR_{>0}$, and let $\Ets$ denote the smooth 2-torus equipped with the symplectic form $Adx\wedge dy$ and another 2-form $bdx\wedge dy$ called the ``B-field".  The mirror dual of $\Ets$ is the complex 1-torus $\Et: = \bC/\Lambda$ given by the natural action of lattice $\Lambda = <1,\tau>$.  This ``weakened context" mentioned above is an equivalence between zeroth cohomology of the Fukaya category, $H^0(\fuk(\Et))$, (with biproducts formally added) and the derived category of coherent sheaves, $\dbcoh(\Et)$.\footnote{To be specific, this will be an equivalence between additive categories.  Generally speaking, the Fukaya category is not a true category but instead an \Ainf{}-category (or worse a curved \Ainf{}-category) only associative at the level of cohomologies.  In this one-dimensional case, however, the obstruction to its associativity vanishes and, after formally adding biproducts, we can construct an equivalence with the derived category of coherent sheaves without considering the enlargement to the derived category constructed from the Fukaya category.}  
The functor between these categories will be constructed from a surprisingly strange relation sending closed geodesics of $\Ets$ equipped with flat complex vector bundles (and some additional structure) to  holomorphic vector bundles and torsion sheaves.  In fact the indecomposable torsion sheaves will be the images of the vertical geodesics (i.e. projections of vertical lines through the quotient $\bR^2/\bZ^2$).  The image of morphisms will be described in terms of a relation sending intersection points (of geodesics) to theta functions!  Perhaps most importantly, as the composition of morphisms in the Fukaya category is given by formal sums of intersection points with coefficients determined from counting pseudo-holomorphic disks (marked at the intersection points), the functor will encode this enumerative information in (compositions of morphisms of certain coherent sheaves, which can be described in terms of) linear combinations of theta functions (with constant matrix coefficients).\\

This thesis is organized as follow.  In section 1 we give a brief review of the necessary background/definitions from symplectic geometry and complex geometry, then in section 2 we continue to a brief discussion of the necessary background from the theory of complex/holomorphic vector bundles.  Sections 3 and 4 given detailed constructions all necessary geometric structures and then all holomorphic line bundles on elliptic curves.  Section 5 and 6 discuss the Fukaya category and together give a detailed exposition of \cite{PZ}'s ``simplest example" of the simplest example of HMS.  In sections 7 and 8 we then discuss the derived category of coherent sheaves on an elliptic curve and construct the above mentioned functor between this category and an enlargement of $H^0(\fuk(\Ets))$.  We also include two appendices, the first giving a minimalist review of general derived categories and the second giving an introduction to the topic of special Lagrangian manifolds and related geometries.\\
\begin{ack}
I'd like to thank my thesis committee members, Jerry Kaminker, Motohico Mulase, and Andrew Waldron.  Also I'd like to thank the many professors and other graduate students who've spent time teaching me mathematics.  In particular Andrew Waldron and David Cherney, who spent countless hours attempting to teach me quantum field theory, and Michael Penkava, who first introduced me to the homological viewpoint of mirror symmetry.  Lastly I'd like to again thank thank my advisor, Motohico Mulase, who's put a great deal of effort into developing me as a mathematician.\end{ack}
\section{Complex, Symplectic and \kah{} manifolds}

Here we will give some very basic review of complex, symplectic and \kah{} manifolds.\\

\bigskip\bigskip
\subsection{Complex manifolds}\label{sec:complex}\hfill\\

\indent A \emph{complex manifold} is a smooth, $2n$-dimensional manifold, $M$, with an atlas charting $M$ into $\bC^{n}$ such that the transition functions are holomorphic.  Equivalently, $M$ is a smooth manifold which admits a complex structure (defined below).\\

\emph{Example:  } $\bC^n$ is a complex manifold with one chart atlas given by the identity map.\\

\emph{Example:  } $\bC\bP^n$ is a complex manifold with an atlas given by the maps\\ $\phi_i:[t_0,...,t_{i-1},1,t_{i+1},...,t_n]\to(t_1,...,t_{i-1},t_{i+1},...t_n)$.

\indent An important distinction between complex and smooth manifolds is that not all complex manifolds are submanifolds of $\bC^m$.  In fact, by Liouville's theorem, any compact connected submanifold of $\bC^m$ is a point.  

\bigskip\bigskip
\subsection{Almost Complex Structures}\hfill\\

We can ``complexify" any even dimensional vector space $V$ by choosing a linear map $J:V\to V$ such that $J^2=-1$.  This gives $V$ the structure of a complex vector space with multiplication defined by $(a+ib)\cdot v = (a+Jb)v$. 

A smooth manifold $M$ is said to be \emph{almost complex} if it admits a smooth tangent bundle isomorphism $J:TM\to TM$ (called an \emph{almost complex structure}) that squares to the negative identity.  All complex (and symplectic) manifolds admit an almost complex structure, but the opposite is not true (for complex or symplectic).\\

More specifically, on an almost complex manifold, $(M,J)$, we can always find complex coordinates $z^\mu = x^\mu + iy^\mu:M\to \bC$ such that at a given point, $p\in M$, we have
$$\begin{array}{lcr}
J\dfr{\partial}{\partial x_\mu} = i\dfr{\partial}{\partial y_\mu}& \an &
J\dfr{\partial}{\partial y_\mu} = -\dfr{\partial}{\partial x_\mu}
\end{array}$$

However, it is in general not possible to find coordinates such that this is true in an entire neighborhood of $p$.  If we can find such coordinates in a neighborhood of every point $p$, then these glue together as a holomorphic atlas for $M$, and $J$ is called a \emph{complex structure}.\\

\begin{thm}[The Newlander-Niremberger Theorem \cite{NN}]
An {almost complex manifold} $(M,J)$ is a complex manifold with complex structure $J$ if and only if the Nijenhuis tensor vanishes.  I.e.
$$N_J(v,w): = [v,w] + J[v,Jw]+J[Jv,w]-[Jv,Jw]\equiv 0$$
for all $v,w\in \Gamma(TM)$, where $[\cdot,\cdot]$ is the Lie bracket of vector fields over $M$.
\end{thm}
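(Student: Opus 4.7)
The plan is to prove the two directions separately after first reformulating the Nijenhuis condition in terms of the $(0,1)$-tangent bundle. Extend $J$ complex-linearly to $TM\otimes_\bR \bC$ and split
$$TM\otimes_\bR \bC = T^{1,0}M \oplus T^{0,1}M,$$
where $T^{1,0}M$ and $T^{0,1}M$ are the $(+i)$- and $(-i)$-eigenbundles of $J$, so that $v^{1,0} := \tfrac{1}{2}(v-iJv)$ and $v^{0,1} := \tfrac{1}{2}(v+iJv)$. A short bilinear algebra computation shows that for real vector fields $v,w$, the $T^{1,0}M$-component of $[v^{0,1},w^{0,1}]$ is (up to a factor of $\tfrac{1}{4}$) equal to $(N_J(v,w))^{1,0}$. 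Consequently, $N_J\equiv 0$ is equivalent to $T^{0,1}M$ being involutive under the $\bC$-linear extension of the Lie bracket. I would isolate this equivalence as a lemma and use it as the working criterion throughout.

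For the ($\Rightarrow$) direction, I would pick local holomorphic coordinates $z^\mu = x^\mu+iy^\mu$ in which $J\partial_{x^\mu} = \partial_{y^\mu}$ and $J\partial_{y^\mu} = -\partial_{x^\mu}$ hold in a whole neighborhood (which exists precisely because $J$ is a complex structure), and then evaluate $N_J$ on the four combinations $(\partial_{x^\mu},\partial_{x^\nu})$, $(\partial_{x^\mu},\partial_{y^\nu})$, $(\partial_{y^\mu},\partial_{x^\nu})$, $(\partial_{y^\mu},\partial_{y^\nu})$. Each term of $N_J$ is a Lie bracket of coordinate vector fields (possibly after applying $J$, which in these coordinates just permutes $\partial_{x}$ and $\partial_{y}$), hence vanishes identically. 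Since $N_J$ is a $(1,2)$-tensor, vanishing on a local frame is enough. Equivalently, one may observe that $T^{0,1}M$ is locally spanned by the commuting frame $\partial_{\bar z^\mu}$, so involutivity is automatic, which by the lemma gives $N_J\equiv 0$.

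For the ($\Leftarrow$) direction, the strategy is to produce local complex coordinates from the involutivity of $T^{0,1}M$, i.e.\ to find $\bC$-valued functions $z^1,\ldots,z^n$ near each point such that $dz^\mu$ vanishes on $T^{0,1}M$ and the $z^\mu$ are $\bC$-linearly independent in the cotangent space. Once we have such $z^\mu$, the transition maps between two such charts automatically satisfy the Cauchy-Riemann equations (their differentials annihilate $T^{0,1}M$), giving the desired holomorphic atlas, and along the way $J$ turns out to act as multiplication by $i$ on the $dz^\mu$-coframe. The construction of the $z^\mu$ is a \emph{complex Frobenius theorem}: in the real-analytic case one extends everything to a complexification of $M$ and applies the classical holomorphic Frobenius theorem to the involutive distribution, which yields the coordinates almost for free. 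The genuine obstacle, and the reason the theorem is nontrivial, is the merely smooth case: there one must solve the overdetermined inhomogeneous Cauchy-Riemann system $\bar\partial_J u = 0$ locally for enough functions $u$, which requires the elliptic PDE estimates of Newlander-Nirenberger (later simplified by Malgrange, Nirenberger, Hörmander and Webster). I would state this analytic step carefully and, given the introductory nature of the thesis, cite it rather than reprove it.
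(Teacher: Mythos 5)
The paper does not actually prove this theorem: it states it and cites the original Newlander--Nirenberg reference, with no argument given. So there is no proof in the paper to match your proposal against; your sketch supplies strictly more than the thesis does. On its own terms your outline is the standard and correct one. The algebraic lemma is right: with $v^{0,1}=\tfrac12(v+iJv)$ one computes that the $T^{1,0}M$-component of $[v^{0,1},w^{0,1}]$ equals $\tfrac14\bigl(N_J(v,w)\bigr)^{1,0}$, so vanishing of $N_J$ is indeed equivalent to involutivity of $T^{0,1}M$ (tensoriality of $N_J$ lets you pass from fields of the form $v^{0,1}$ to arbitrary sections). The forward direction is fine as written: in honest holomorphic coordinates $J$ sends coordinate fields to coordinate fields, all the Lie brackets in $N_J$ vanish, and since $N_J$ is a tensor, vanishing on a frame suffices. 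The only caveat is the one you already flag yourself: the reverse direction's analytic core --- producing the functions $z^\mu$ annihilating $T^{0,1}M$ in the merely smooth case, i.e.\ the elliptic estimates for the $\bar\partial_J$-system --- is cited rather than proved, so what you have is a proof modulo the complex Frobenius/Newlander--Nirenberg analysis. That is entirely consistent with the paper, which defers the whole theorem to the same citation, and it is the appropriate level of detail for an introductory treatment; just make sure the statement of the cited analytic result (existence of $n$ independent local solutions of $\bar\partial_J u=0$ under involutivity) is quoted precisely, since that is where all the difficulty lives.
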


\ex All almost complex structures on a surface are integrable.\\

\begin{rem} All symplectic manifolds admit an almost complex structure.  In fact, given a symplectic manifold $(M,\omega)$, choosing either an almost complex structure, $J$, or a Riemannian metric $g$, defines a ``compatibility triple" $(\omega,J,g)$ such that $g(u,v) = \omega(u,Jv)$.
\end{rem}

\begin{defn}
Let $f:M\to M'$ be a map between to almost complex manifolds $(M,J)$ and $(M,J')$.  $f$ is said to be $(J,J')$-holomorphic if $dfJ = J'f$.\\
\end{defn}

\bigskip\bigskip
\subsection{Symplectic Geometry}\hfill\\

The study of symplectic geometry was originally motivated by the field of Hamiltonian mechanics, which sought to describe the laws of classical mechanics in terms of ``conserved quantities" (see section \ref{sec:Ham}) on a symplectic manifold representing the space of all possible values of position and momentum for some system.  The term symplectic was introduced by Weyl in 1939 (see the footnote on p. 165 of \cite{WeylSymp}) as a greek analog to the latin root of ``complex" meaning ``braded together".  Despite having such early origins, little was understood about symplectic geometry until Gromov introduced the idea of pseudo-holomorphic curves in 1985.  Since then the fields of Gromov-Witten invariants and Floer homologies have greatly expanded our ability to understand the mathematical structure of these geometries.\\

\begin{defn}
A \emph{symplectic manifold} is a pair $(M,\omega)$ of a smooth manifold, $M$, and a closed non-degenerate 2-form on $M$, $\omega$.  We call such an $\omega$ a \emph{symplectic form}.
\end{defn}

\rem{The non-degeneracy of this anti-symmetric form enforces that all symplectic manifolds are even dimensional and at each $p\in M$ there exists a basis ${e_1,...e_n,f_1,...,f_n}$ of $T_pM$ such that $\omega(e_i,f_j) = \delta_{ij}$ and $\omega(e_i,e_j) = \omega(f_i,f_j) = 0$ for all $i,j$.}

\noindent\emph{Examples:}\\
\begin{itemize}
\item $2n$-dimensional Euclidean space: $\bR^{2n}$ with its standard coordinates and\\
$\omega = \sum_{i=1}^{n}dx_i\wedge dx_{i+n}$.\\

\item Any 2-surface with any non-vanishing 2-form.\\

\item  The $2n$-torus\footnote{After putting a complex structure on it, we will refer to this manifold as a complex $n$-torus or, in the case $n=1$, an elliptic curve}:  $\bC^{n}/\bZ^{2n} \overbrace{\cong S^1\times ...\times S^1}^\text{$2n$-times}$ with its standard angular coordinates and $\sum_{i=1}^n\omega = d\theta_i\wedge d\theta_{i+n}$\\

\item The cotangent bundle $T^*M$ of any smooth $n$-dimensional manifold $M$ can be given a symplectic structure.  Letting $(x_1,...x_n)$ denote local coordinates on $M$, we can define local coordinates on $T^*M$ by the map\\
$(x,\xi)\mapsto (x_1,...,x_n,\xi_1,...,\xi_n)$, where $\xi_i$ are the components of $\xi$ with respect to the basis local coordinate from, i.e. $\xi = \sum\xi_i dx_i$.\\

$\omega = \sum dx_i\wedge d\xi_i$ then definines a symplectic form on $T^*M$.
\end{itemize}
\hfill\\

\noindent\emph{Non-Examples:}\\
\begin{itemize}
\item Any odd dimensional or non-orientable manifold.\\

\item \emph{By Stoke's theorem:} Any compact connected manifold with vanishing second cohomology (e.g. $S^n$ for $n>2$).  A symplectic form would then have to be exact, and thus, letting $\eta$ denote its primitive, give rise to the exact volume form $\omega^n = d(\eta\wedge \omega^{n-1})$.\\
\end{itemize}

\begin{thm}[Darboux's theorem]  
Let $(M,\omega)$ be a $2n$-dimensional symplectic manifold and $p\in M$.  Then there exist local coordinate chart $(U,(x_1,...,x_n,y_1,...,y_n))$ such that $p\in U$ and, on $U$,
$$\omega = \sum_{i=1}^n dx_i\wedge dy_i$$
\end{thm}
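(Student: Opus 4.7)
The plan is to use the classical Moser trick, reducing the theorem to the pointwise linear normal form plus a time-dependent flow argument. First I would establish the linear Darboux result at the single point $p$: by the remark following the definition of symplectic manifold, $\omega_p$ admits a symplectic basis of $T_pM$, so after a linear change of coordinates on any smooth chart around $p$ I may assume there are local coordinates $(x_1,\dots,x_n,y_1,\dots,y_n)$ centered at $p$ in which $\omega$ and the standard form $\omega_1:=\sum dx_i\wedge dy_i$ agree at $p$. Denote $\omega_0:=\omega$; the remaining task is to find a local diffeomorphism $\phi$ fixing $p$ with $\phi^*\omega_1=\omega_0$.

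Next I would interpolate via $\omega_t:=(1-t)\omega_0+t\omega_1$ for $t\in[0,1]$. Each $\omega_t$ is closed, and at $p$ we have $\omega_t|_p=\omega_0|_p$, which is non-degenerate; since non-degeneracy is an open condition, $\omega_t$ is symplectic on some neighborhood $U$ of $p$ uniformly in $t$. Shrinking $U$ to a star-shaped neighborhood and invoking the Poincar\'e lemma for the closed 2-form $\omega_1-\omega_0$, I obtain a 1-form $\alpha$ on $U$ with $d\alpha=\omega_1-\omega_0$; by subtracting the constant $\alpha|_p$ I may further assume $\alpha|_p=0$.

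Now apply Moser's trick: define a time-dependent vector field $X_t$ on $U$ by the equation $\iota_{X_t}\omega_t=-\alpha$, which has a unique solution because $\omega_t$ is non-degenerate. Since $\alpha|_p=0$, we have $X_t|_p=0$ for all $t$, so $p$ is a fixed point of the (time-dependent) flow $\phi_t$ of $X_t$; by continuity and the standard ODE existence theorem, this flow exists on $[0,1]$ on some possibly smaller neighborhood $V\subset U$ of $p$. Then compute, using Cartan's magic formula, closedness of $\omega_t$, and the definition of $X_t$:
\[
\frac{d}{dt}\phi_t^*\omega_t
=\phi_t^*\!\left(\mathcal{L}_{X_t}\omega_t+\tfrac{d}{dt}\omega_t\right)
=\phi_t^*\bigl(d\iota_{X_t}\omega_t+(\omega_1-\omega_0)\bigr)
=\phi_t^*\bigl(-d\alpha+d\alpha\bigr)=0.
\]
Hence $\phi_1^*\omega_1=\phi_0^*\omega_0=\omega_0$, so the coordinates obtained by pulling the $(x_i,y_i)$ back along $\phi_1$ realize $\omega$ in the desired standard form on a neighborhood of $p$.

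The main obstacle, and the only non-mechanical step, is recognizing that the Moser interpolation works: the art is in choosing $\omega_t$ linearly and then trading the problem of diagonalizing a 2-form for the much easier problem of integrating a vector field determined algebraically from a primitive $\alpha$. Verifying non-degeneracy on a uniform neighborhood and the existence of the flow up to $t=1$ are routine once one observes that $p$ is a fixed point; the Poincar\'e-lemma step is where one uses that both forms agree at $p$, so that the primitive can be chosen vanishing at $p$ and the flow is well-defined in a neighborhood.
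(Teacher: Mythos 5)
Your proposal is correct: it is the standard Moser-trick proof of Darboux's theorem, and each step (linear normal form at $p$, uniform non-degeneracy of the interpolation $\omega_t$ near $p$ via compactness of $[0,1]$, a primitive $\alpha$ vanishing at $p$, and the time-dependent flow computation) is sound. The paper itself gives no argument, deferring to standard texts such as da Silva, where this Moser argument is exactly the proof presented, so your route matches the intended one.
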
\begin{proof}A proof of this theorem can be found in most introductory texts on symplectic geometry (e.g. \cite{daSilva}).
\end{proof}
\hfill\\
\begin{rem}  This says that all symplectic manifolds locally all look alike.  For purpose of comparison, any Riemannian metric can be made (by choice of coordinates) to look like the Euclidean at a single point, but not necessarily in an entire neighborhood.  If we can find such a local coordinate patch around every point, then our manifold is, by definition, \emph{flat}.\end{rem}    

\bigskip\bigskip
\subsection{Lagrangian submanifolds and Hamiltonian flows}\label{sec:Ham}\hfill\\

The objects of the Fukaya category are closed Lagrangian submanifolds (with some additional structure we will describe later).  The submanifolds are usually either required to be ``special" (in the sense described in appendix \ref{sec:SL}) or else only considered up to ``Hamiltonian equivalence" in the sense we will describe now.\\
 
\begin{defn}  Let $(M,\omega)$ be a symplectic manifold.  A submanifold $L\subset M$ is called \emph{Lagrangian} if $\rest{\omega}_L\equiv 0$.
\end{defn}

Let $V$ be any vector field on a smooth manifold $M$.  We can locally (in time) solve for the ``flow" of a point $x\in M$ in the direction of $V$ (i.e. solve $\dot \gamma(t) = V_{\gamma(t)}$, $\gamma(0) = x$).  It is simple to show that this gives us a family of diffeomorphisms $\psi_t:M\to M$ called the \emph{flow} of $V$.  We call $V$ \emph{complete} if its flow exists for all time.  A vector field, $V$, on a symplectic manifold, $(M,\omega)$, is called a \emph{symplectic} if its flow preserves $\omega$, i.e. $\psi_t^*\omega = \omega$.\\
By Cartan's formula for the Lie derivative we have 
$$\ddt \psi_t^*\omega = \mathrm{Lie}_V \omega = d(V\llcorner \omega) +V\llcorner d\omega = d(V\llcorner \omega)$$
Where we use $\llcorner$ to denote the interior product, i.e. $V\llcorner \omega:W\mapsto \omega(V,W)$.\\

This tells us that $V$ is symplectic if and only if $V\llcorner \omega$ is closed.  If $V\llcorner \omega$ is exact, then there exists some function $H:M\to \bR$ such that $dH = V\llcorner \omega$.  Moreover, this function $H$ is preserved by the flow, i.e. $\ddt \psi_t^* H = dH(V) = 0$.  In this case, $\psi_t$ is called a \emph{Hamiltonian flow}, $H$ a \emph{Hamiltonian function}, and $V$, a \emph{Hamiltonian vector field}.\\

Notice that $\omega$ defines a perfect pairing between vectors and covectors.  Thus given any function $H$, we can find a vector field $V$ such that $dH = V\llcorner \omega$.\\

\begin{defn}
Two Lagrangian submanifolds, $L$ and $L'$ are said to be \emph{Hamiltonian equivalent} if there exists a Hamiltonian flow $\psi:I\times M\to M$ such that $\psi_t(L) = L'$ for some $t\in I$.\\ 
\end{defn}

\bigskip\bigskip
\subsection{\kah{} Geometry}\hfill\\

A smoothly varying positive-definite hermitian inner product on the tangent bundle of an almost complex manifold is called a \emph{Hermitian metric}.  A (resp. almost) complex manifold equipped with such a structure is called a \emph{(almost) Hermitian manifold}.\\

An almost Hermitian manifold $(M,J,h)$ has an associated Riemmanian metric and non-degenerate 2-form given by its symmetric and anti-symmetric parts, i.e. $g = \frac{1}{2}(h+\bar{h})$ and $\omega = \frac{i}{2}(h-\bar{h})$.  Morover, as any smooth manifold admits a Riemannian metric $g$, the existence of an almost complex structure $J$ gives us the existence of a non-degenerate 2-form $\omega(X,Y) = g(JX,Y)$ and thus a Hermitian metric $h = g-i\omega$.\\

\begin{defn}
A \emph{\kah{}} manifold is an almost Hermitian manifold that satisfies an integrability condition that can be stated in the following three equivalent ways:
\begin{itemize}
\item $\omega$ is closed and $J$ is integrable
\item $\nabla J = 0$
\item $\nabla \omega = 0$
\end{itemize}
where $\nabla$ is the Levi-Cevita connection of $g$.\\
\end{defn}

\ex All Riemann surfaces are \kah{}.\\

%
%



\bigskip\bigskip
\subsection{Calabi-Yau Manifolds}\label{sec:CY}\hfill\\

Studied by mathematicians since at least the 1950s \cite{Calabi54,Calabi57}, Calabi-Yau manifolds have, since the 1980s \cite{CHS85}, been of particular importance to physicists studying the subject of superstring theory.  Superstring theory is a unified theory seeking to describe all elementary particles and fundamental forces of nature by modeling particles as vibrating strings.  For the superstring model to be a consistent physical theory (e.g. to predict massless photons) it is necessary for space-time to be ten-dimensional.  Experimental observation has led us to believe that our space-time locally looks like four-dimensional Minkowski space, $M^4$, (i.e. $\bR^4$ equipped with the pseudo-Riemannian metric $d\vec{x}^2-dt^2$).  Thus if these extra six dimensions do exist, it is expected that space-time locally look like the product $M^4\times X$ for $X$, speaking informally, some compact space with dimensions of tiny length (on the order of the Planck length).  Different choices of $X$ (and, of course, product structure) lead to different ``effective" four-dimensional theories.  It was shown in \cite{CHS85}, that (under certain assumptions about the product structure of space-time) $X$ must be taken to be a Calabi-Yau manifold in order for this effictive four-dimensional theory to admit an $N=1$ supersymmetry (a property popularly desired by physicists).\\

We define an $n$-dimensional \emph{Calabi-Yau} manifold to be a compact $n$-dimensional \kah{} manifold admitting a nowhere vanishing holomorphic $n$-form, which we call its \emph{Calabi-Yau form}.\\

This definition of Calabi-Yau implies that our \kah{} structure has a vanishing first Chern class and thus can be thought of as Ricci-flat by ``The Calabi Conjecture", which we state here as proven by Yau \cite{Yau77,Yau78}.\\
\begin{thm}\label{thm:CY}Let $M$ be compact complex manifold admitting \kah{} form $\omega$ and suppose $\rho'$ is some real, closed $(1,1)$-form on $M$ such that $[\rho']=2\pi c_1(M)$.  Then there is a unique \kah{} form $\omega'$ on $M$  such that $[\omega]=[\omega']\in H^2(M,\bR)$ and $\rho'$ is the Ricci form of $\omega'$.\\
In particular, if the first Chern class vanishes on $M$ (w.r.t. a \kah{} metric), then $M$ admits a unique cohomologically equivalent Ricci-flat (i.e. $\rho=0$) \kah{} structure.\end{thm}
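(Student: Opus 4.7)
The plan is to reduce the problem to the existence of a smooth solution of a complex Monge--Amp\`ere equation and then attack that PDE via the continuity method combined with a priori estimates, following Yau's original approach.

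First I would use the $\partial\bar\partial$-lemma on the compact K\"ahler manifold $M$: since the Ricci form $\rho$ of $\omega$ and the prescribed $(1,1)$-form $\rho'$ both represent $2\pi c_1(M)$, we have $\rho - \rho' = i\partial\bar\partial f$ for some $f\in C^\infty(M,\mathbb{R})$, and I fix the additive constant by requiring $\int_M (e^f-1)\,\omega^n = 0$. Any cohomologous K\"ahler form can, by the same lemma, be written as $\omega' = \omega + i\partial\bar\partial\varphi$ for some real $\varphi$, unique up to a constant. A direct computation using $\rho = -i\partial\bar\partial\log\det(g)$ shows that imposing $\mathrm{Ric}(\omega')=\rho'$ is equivalent to the complex Monge--Amp\`ere equation
$$(\omega + i\partial\bar\partial\varphi)^n = e^f\,\omega^n,$$
subject to the positivity constraint $\omega + i\partial\bar\partial\varphi > 0$. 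Uniqueness of $\varphi$ (up to additive constants) follows quickly: subtracting two solutions $\varphi_1,\varphi_2$ and factoring $(\omega_1)^n - (\omega_2)^n$ as $i\partial\bar\partial(\varphi_1-\varphi_2) \wedge T$ for a positive current $T$ of bidegree $(n-1,n-1)$ lets the maximum principle conclude $\varphi_1\equiv\varphi_2+\text{const}$.

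For existence I would set up the continuity method on the one-parameter family
$$(\omega + i\partial\bar\partial\varphi_t)^n = e^{tf + c_t}\,\omega^n, \qquad t\in [0,1],$$
with $c_t$ chosen so that both sides have equal total volume. Let $S\subset [0,1]$ be the set of parameters admitting a smooth solution with $\omega+i\partial\bar\partial\varphi_t$ K\"ahler; clearly $0\in S$ via $\varphi_0\equiv 0$. Openness of $S$ is the easy half: linearizing the Monge--Amp\`ere operator at $\varphi_t$ gives, up to a constant, the Laplacian $\Delta_{\omega_t}$ on real functions, which is an isomorphism from mean-zero $C^{k+2,\alpha}$ functions onto mean-zero $C^{k,\alpha}$ functions, so the implicit function theorem in H\"older spaces perturbs any solution to nearby $t$.

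The hard part, and the historical main obstacle, is closedness of $S$, which demands a priori estimates on $\varphi_t$ uniform in $t$. One proves in turn: a $C^0$ bound on $\varphi_t$ (via Moser iteration applied to powers $|\varphi_t|^p$, or equivalently via Ko\l odziej's pluripotential comparison principle); a $C^2$ bound, namely an upper bound on $n + \Delta_\omega\varphi_t$, obtained by applying the maximum principle to a quantity of the form $\log(n+\Delta_\omega\varphi_t) - A\varphi_t$ with $A$ chosen large in terms of the holomorphic bisectional curvature of $\omega$; and a $C^{2,\alpha}$ bound via Evans--Krylov, exploiting the concavity of $\det^{1/n}$ on positive Hermitian matrices. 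With $\|\varphi_t\|_{C^{2,\alpha}}$ controlled, the Monge--Amp\`ere operator becomes uniformly elliptic along the family, and Schauder bootstrapping upgrades the bound to $C^{k,\alpha}$ for every $k$; Arzel\`a--Ascoli then extracts smooth limits, so $S$ is closed. Combined with connectedness of $[0,1]$ this gives $1\in S$, producing the desired $\omega'$; the main technical obstruction throughout is the $C^2$ estimate.
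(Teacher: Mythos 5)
Your proposal is a correct outline of exactly the argument the paper relies on: the paper does not prove this theorem itself but cites Yau's resolution of the Calabi conjecture, and your reduction to the complex Monge--Amp\`ere equation $(\omega+i\partial\bar\partial\varphi)^n=e^f\omega^n$, uniqueness via the maximum principle, and existence by the continuity method with $C^0$, $C^2$, and $C^{2,\alpha}$ (Evans--Krylov) a priori estimates is precisely Yau's proof in its modern form. The analytic heart (the estimates) is only named rather than carried out, but since the paper defers entirely to the cited references, your sketch matches the intended proof.
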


\ex In one-dimension this implies that that our compact Riemann surface is parallelizable.  Thus the only examples are those of genus one.\\


\section{Complex and Holomorphic Vector Bundles}\label{sec:VB}

Studying HMS requires a good understanding of both complex and holomorphic vector bundles.  We will give some back review of these subjects here as relevant to this thesis.\\
\bigskip\bigskip
\subsection{Complex Vector Bundles}\hfill\\
\begin{defn}{A rank $k$ \emph{complex vector bundle} is a vector bundle $\bC^k\to E\xrightarrow{\pi} M$ whose transition functions are $\bC$-linear.}\end{defn}

\ex  \emph{The trivial bundle}, $M\times\bC^k\xrightarrow{\text{proj}_1} M$, is a rank $k$ complex vector bundle.  The complexified tangent bundle of an almost complex manifold is also a complex vector bundle.  These two examples are rarely the same.  In fact, the complex (one) dimensional torus is the only Riemann surface where this is true (for $k=1$).

Let us classify all complex vector bundles on the $k$-sphere.\\
\begin{lem}{Every vector bundle over a contractible base is trivial.}\end{lem}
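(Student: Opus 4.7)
The plan is to reduce the statement to the standard homotopy invariance of pullback vector bundles: if $f_0, f_1 : X \to Y$ are homotopic continuous (resp.\ smooth) maps and $E \to Y$ is a vector bundle, then $f_0^* E \cong f_1^* E$. Granting this, the proof of the lemma is immediate. Let $\pi : E \to B$ be a vector bundle over a contractible base $B$, and pick a basepoint $b_0 \in B$. Contractibility gives a homotopy between $\mathrm{id}_B$ and the constant map $c_{b_0} : B \to B$, so
\[
E \;\cong\; \mathrm{id}_B^* E \;\cong\; c_{b_0}^* E \;\cong\; B \times E_{b_0},
\]
where the last isomorphism holds because a pullback by a constant map is manifestly trivial (the total space is $\{(b,e) : \pi(e) = b_0\} = B \times E_{b_0}$).

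So the real content to be proved is the homotopy invariance statement. My plan is to prove the slightly stronger assertion that for any vector bundle $F \to X \times [0,1]$, the restrictions $F|_{X \times \{0\}}$ and $F|_{X \times \{1\}}$ are isomorphic; specializing to $F = H^*E$ where $H$ is a homotopy between $f_0$ and $f_1$ yields the invariance. I would prove this by constructing a bundle isomorphism via parallel transport along the $[0,1]$-direction. Concretely, choose a connection $\nabla$ on $F$ (which exists by a partition of unity argument on $X \times [0,1]$), and define the isomorphism $F|_{X \times \{0\}} \to F|_{X \times \{1\}}$ fiberwise by parallel transporting each vector along the path $t \mapsto (x,t)$. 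Smoothness of the resulting map in $x$ follows from smooth dependence of ODE solutions on parameters, and the fact that $\nabla$ is linear in the fiber makes the map $\bC$-linear fiberwise. In the purely topological (rather than smooth) category, one replaces parallel transport by a direct covering-homotopy argument using local trivializations and a partition of unity on $X$.

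The main obstacle is therefore the construction and global well-definedness of this ``parallel transport'' isomorphism. Over a single trivializing chart of $F$ it is a triviality, but one must patch such local trivializations together. This is handled exactly by the existence of a global connection: connections form an affine space modeled on $1$-forms with values in $\mathrm{End}(F)$, and this affine space is nonempty because local connections (e.g.\ the trivial connection on each chart) can be averaged by a partition of unity subordinate to a trivializing open cover. Once a global $\nabla$ is in hand, parallel transport along the fixed curves $t \mapsto (x,t)$ is unambiguous and gives the required isomorphism, completing the proof.
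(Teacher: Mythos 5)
Your proof is correct, and it is genuinely different from what the paper offers: the paper's ``proof'' of this lemma is the one-line quip ``If this were not true, our definition of vector bundle would not make sense,'' which is not an argument at all (local triviality of a bundle says nothing a priori about global triviality, even over a contractible base --- that is precisely the content of the lemma). You instead give the standard substantive argument: reduce to homotopy invariance of pullbacks, note $E \cong \mathrm{id}_B^*E \cong c_{b_0}^*E \cong B \times E_{b_0}$, and prove the invariance by showing that for a bundle $F \to X\times[0,1]$ the two end restrictions agree, via parallel transport along the interval direction for a connection built with a partition of unity (or a covering-homotopy/clutching argument in the topological category). This is essentially the textbook proof (cf.\ Hatcher's vector bundle notes, which the paper itself cites as \cite{HatcherVBK}), and it buys an actual proof where the paper has none; the only hypothesis you are implicitly using is paracompactness of the base (to get partitions of unity, hence a global connection or the topological patching), which is automatic for the manifolds considered here, so your argument is complete in the paper's context. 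One could shorten your write-up by citing the homotopy invariance theorem directly, but deriving it from parallel transport as you do is self-contained and appropriate for the smooth setting the thesis works in.
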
\begin{proof}{If this were not true, our definition of vector bundle would not make sense.}\end{proof}
\begin{thm}{There is a 1-1 equivalence between homotopy classes of functions $f:S^{k-1}\to \Gl_k(\bC)$ and complex vector bundles on $S^k$.}\end{thm}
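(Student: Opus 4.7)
The plan is to use the clutching construction. Write $S^k = D^k_+ \cup D^k_-$ as the union of its closed upper and lower hemispheres, which meet in the equator $S^{k-1}$. Since each hemisphere is contractible, by the previous lemma any complex vector bundle $E \to S^k$ is trivial when restricted to either hemisphere. Thus we may choose trivializations $\varphi_\pm : E|_{D^k_\pm} \xrightarrow{\sim} D^k_\pm \times \bC^n$, and on the equator the composition $\varphi_+ \circ \varphi_-^{-1}$ determines a \emph{clutching function} $f_E : S^{k-1} \to GL_n(\bC)$. Conversely, given any continuous $f : S^{k-1} \to GL_n(\bC)$, we define a bundle $E_f$ on $S^k$ by taking the disjoint union $(D^k_+ \times \bC^n) \sqcup (D^k_- \times \bC^n)$ and identifying $(x,v) \in D^k_- \times \bC^n$ with $(x, f(x)v) \in D^k_+ \times \bC^n$ for $x \in S^{k-1}$.

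The proof then proceeds in the standard four steps. First, I would verify that $E_f$ is a well-defined complex vector bundle by checking local triviality near the equator (using a small collar neighborhood of $S^{k-1}$ inside $D^k_+$). Second, I would show that the clutching function $f_E$ extracted from a bundle $E$ is well-defined up to homotopy: any two choices of trivializations $\varphi_\pm, \varphi'_\pm$ differ by maps $g_\pm : D^k_\pm \to GL_n(\bC)$, and since each $D^k_\pm$ is contractible, the restrictions $g_\pm|_{S^{k-1}}$ are null-homotopic, so $f_E$ and $f'_E = g_+ \cdot f_E \cdot g_-^{-1}$ are homotopic. Third, for the inverse direction, I would show that homotopic clutching functions $f \simeq f'$ give isomorphic bundles: a homotopy $F : S^{k-1} \times [0,1] \to GL_n(\bC)$ yields a bundle on $S^k \times [0,1]$ whose restrictions to $S^k \times \{0\}$ and $S^k \times \{1\}$ are $E_f$ and $E_{f'}$, and these are isomorphic by the homotopy invariance of pullback bundles on paracompact spaces. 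Finally, I would verify that the two constructions are mutually inverse at the level of homotopy/isomorphism classes: starting from a bundle $E$, the bundle $E_{f_E}$ is isomorphic to $E$ via the trivializations used to define $f_E$; starting from $f$, the clutching function of $E_f$ with respect to the obvious trivializations is exactly $f$.

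The main obstacle is the third step, which requires a little care: one needs the fact that isomorphism classes of vector bundles over a paracompact base are invariant under homotopy of the base, typically proven via a covering homotopy argument. Everything else is essentially bookkeeping with trivializations, and the contractibility of the hemispheres — which was the content of the preceding lemma — is what makes the clutching description manageable.
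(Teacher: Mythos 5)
Your proposal is correct and follows essentially the same route as the paper: the clutching construction over the two hemispheres, with the preceding lemma supplying the trivializations and contractibility of $D^k_\pm$ plus path-connectedness of $\Gl_n(\bC)$ giving well-definedness of the homotopy class. In fact your write-up is more complete than the paper's, since you also verify the converse direction (homotopic clutching functions yield isomorphic bundles, via homotopy invariance of pullbacks) and the mutual inverseness of the two constructions, which the paper leaves implicit.
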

\begin{proof}{Let $E\to S^k$ be any rank $n$ complex vector bundle.  By the above lemma, $E$ can be covered by two trivializations, $\psi_\pm:\pi\inv(D^k_\pm)\to D^k_\pm\times \bC^n$,  where $D_+\cap D_- \cong (-\epsilon,\epsilon)\times S^{k-1}$.  So the transition functions of this two chart atlas are defined by a homotopy $f_t:S^{k-1}\to \Gl_k(\bC)$ (for $t\in (-\epsilon,\epsilon)$).\footnote{Given a complete set of transition functions, we can, of course, reconstruct the bundle they came from.  This homotopy, $f_t$, is sometimes called a \emph{clutching function}.}  All that is left to do then is show this homotopy class is well defined, i.e. if we take any two chart atlas, it gives us the same homotopy class.  To see this notice that we can contract $\phi_\pm$ to a constant map over a point.  As $\Gl_n(\bC)$ is path connected, all such charts are homotopy equivalent.\footnote{$\Gl_n(\bR)$, on the other hand, has not one but two path connected components.  Though we similarly classify all \emph{orientable} real vector bundles on the $k$-sphere.}}\end{proof}

\begin{rem}
From this we have that all complex vector bundles on the circle are trivial (we can also see from the proof that we have two choices of real bundle structure - the cylinder or M\"{o}bius band).  The above technique can be used for any manifold that can be covered by two contractible charts.  For example, all bundles on $\bC^*$ are also trivial.  Actually, we can generalize this (as is done in the finally of \cite{BottTu}) to manifolds having a ``good" cover of $r$ charts (i.e. manifolds of finite type) to find a bijection between rank $k$ complex vector bundles and homotopy classes of maps $M\to Gr_k(\bC^n)$ for any $n\geq rk$.  In fact, we can classify complex vector bundles over an arbitrary manifold using the infinite Grassmannian (see \cite{HatcherVBK}).
\end{rem}

Let $E\to M$ be a real rank $k$ vector bundle over an $n$ dimensional manifold $M$.\\
If $k>n$, then there exists a nonvanishing global section of $E$.  Similarly if $E$ is complex and has fiber $\bC^k$, then there exists a global section if $k>n/2$.  In particular this tells us then any vector bundle looks like $E = E'\oplus I_{k-n}$ where $I$ is the trivial bundle and $E'$ is some bundle with an $n$ dimensional fiber.\\

Note that taking any section of $E$ we can use the transversality theorem to assume it is transversal to the zero section and thus, locally, has finite zeros.\\
\begin{defn}\label{def:deg}
The \emph{degree} of a complex vector bundle $E$ is measured using as section $\sigma$ which is transverse to the zero section.  $\deg(E) = \sum (-1)^{sgn(det(d\sigma_{p}))}$
\end{defn}  
This is well-defined and equivalent to the degree homomorphism given by the first Chern class (or the exponential sheaf sequence).\\


\bigskip\bigskip
\subsection{Connections on Vector Bundles}\hfill\\


\begin{defn}Let $G$ be a topological group and $M$ a manifold.  A \emph{principle $G$-bundle} over $M$ is a fiber bundle $P\ppi M$ equipped with a fiber-preserving continuous right action of $G$ on $P\ppi M$ which acts freely and transitively on its fibers.
In other words, we have a continuous right action of $G$, $\rho:P\times G\to P$, such that $\pi\circ \rho = \pi$ and at any point in $p\in P$, the induced map $\rho_p:G\to \pi^{-1}(\pi(p))$ is a homeomorphism.
\end{defn}

\ex Consider the frame bundle, $GL(E)$, of some vector bundle $F\to E\to M$.  $GL(E)$ is a principle $\Gl(F)$-bundle under the action $(p\cdot g)(v) = p(g\cdot v)$.  This is a useful way to think about vector bundles as is shown by the following theorem.\\


\begin{thm}
\label{thm:monodromyguage}
The gauge equivalence classes of flat connections on a principle $G$-bundle over a connected manifold $M$ are in one-to-one correspondence with the conjugacy classes of representations of $\pi_1(M)\to G$.
\end{thm}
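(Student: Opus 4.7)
The plan is to build the correspondence via the holonomy (monodromy) of a flat connection. Fix a basepoint $x_0 \in M$ and a point $p_0 \in P_{x_0}$ in the fiber over it. Given a flat connection on $P\ppi M$, any piecewise smooth loop $\gamma\colon [0,1]\to M$ with $\gamma(0)=\gamma(1)=x_0$ admits a unique horizontal lift starting at $p_0$, ending at some $p_0\cdot g_\gamma$ with $g_\gamma\in G$. The first step is to show that $g_\gamma$ depends only on the homotopy class of $\gamma$: if $\gamma_s$ is a smooth family of loops, flatness of the connection (vanishing curvature 2-form) together with Stokes' theorem on the square $[0,1]^2$ forces the horizontal lifts' endpoints to coincide, so we get a well-defined homomorphism $\rho_{(\nabla,p_0)}\colon \pi_1(M,x_0)\to G$.

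Next I would check that the conjugacy class of $\rho_{(\nabla,p_0)}$ is independent of the chosen base frame $p_0$ and changes only by conjugation under a gauge transformation. Replacing $p_0$ by $p_0\cdot h$ conjugates every holonomy element by $h^{-1}$ because the $G$-action commutes with horizontal lifting. A bundle automorphism covering the identity of $M$ acts on each fiber by an element of $G$, and at $x_0$ this element conjugates the holonomy representation. Hence the map $\nabla \mapsto [\rho_{(\nabla,p_0)}]$ descends to a well-defined map from gauge equivalence classes of flat connections to conjugacy classes of representations.

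For the inverse direction, given a representation $\rho\colon \pi_1(M,x_0)\to G$, I would form the associated bundle $P_\rho := (\widetilde M \times G)/\pi_1(M)$, where $\pi_1(M)$ acts on $\widetilde M$ by deck transformations and on $G$ by left multiplication through $\rho$. The flat connection on the trivial bundle $\widetilde M \times G \to \widetilde M$ is $\pi_1(M)$-invariant and descends to a flat connection on $P_\rho$ whose holonomy, when computed with the obvious choice of base frame, recovers $\rho$. Conjugate representations yield isomorphic bundles with gauge-equivalent connections, so this gives a well-defined inverse. A short verification that the two constructions compose to the identity in both directions (using the universal property of the universal cover and the uniqueness of horizontal lifts) completes the correspondence.

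The main obstacle is the bookkeeping of base points and the careful identification of gauge transformations with fiberwise $G$-actions; in particular, one must be careful that gauge transformations of a principal $G$-bundle act by right multiplication by maps $M\to G$ only locally, and globally correspond to sections of the adjoint bundle $P\times_{\mathrm{Ad}} G$, so the claim that gauge equivalence at $x_0$ is exactly conjugation by an element of $G$ requires verifying that any element of $G$ acting at the fixed fiber extends to a global gauge transformation preserving flatness. This is where connectedness of $M$ enters: it ensures that conjugation by a single $g\in G$ extends, via parallel transport along a chosen spanning tree of loops, to a globally defined automorphism of $P_\rho$.
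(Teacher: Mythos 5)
Your proof is essentially correct, and it is worth noting that the paper does not actually prove this theorem at all: it simply cites Section 2.1 of Morita's book, where the result is obtained "from the perspective of distributions," i.e.\ flatness is read as integrability of the horizontal distribution (Frobenius), so that the bundle is locally trivialized by horizontal leaves and parallel transport is locally constant. Your route is the standard holonomy argument and buys an explicit inverse: the associated flat bundle $P_\rho=(\widetilde M\times G)/\pi_1(M)$ with the descended trivial connection, which the distribution-theoretic reference leaves more implicit. Two small points deserve tightening. First, your justification of homotopy invariance by "Stokes' theorem on the square" is loose for nonabelian $G$; the clean argument is to pull the bundle and connection back over $[0,1]^2$, observe that a flat bundle over a simply connected base is trivializable by horizontal sections (this is exactly the integrability statement the cited reference uses), and conclude that the endpoints of the lifted homotopy agree. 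Second, your final worry about extending conjugation by $g\in G$ to a global gauge transformation is resolved more simply than by a spanning tree: left multiplication by $g$ on the $G$-factor of $\widetilde M\times G$ descends to a connection-preserving isomorphism $P_\rho\to P_{g\rho g^{-1}}$, with no auxiliary choices. Relatedly, be aware that your inverse construction produces a bundle $P_\rho$ that need not be isomorphic to a prescribed $P$, so the correspondence you prove is really between isomorphism classes of pairs (principal $G$-bundle, flat connection) and conjugacy classes of representations; this is the reading the paper needs for its Corollary over $S^1$, where all the relevant bundles are trivial, but it is worth stating explicitly.
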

\begin{proof}A simple proof of the fact (from the perspective of distributions) can be found in section 2.1 of \cite{morita2001geometry}.\end{proof}

\begin{rem}The map $\pi_1(M)\to G$ is known as the \emph{monodromy representation} of the connection.  The \emph{holonomy group} of a connection (based at point $x\in M$) is defined in terms of the parallel transport operator, $\ptr$, as $\hol = \{\ptr_\gamma\stb \gamma \text{ is a loop based at } x\}$.  The \emph{restricted holonomy group}, $\hol^0$ is the normal subgroup given by contractible loops.  The monodromy representation is then the natural surjection $\pi_1(M)\to \hol/\hol^0$.  A connection is flat if and only if $\hol^0$ is trivial; thus in our case, this surjection is also sometimes called the \emph{holonomy homomorphism}.
\end{rem}

\begin{cor}\label{thm:S1con}  The gauge equivalence classes of flat connections on rank $k$ complex vector bundles over  $S^1$ are in one-to-one correspondence with the conjugacy classes of $\Gl_k(\bC)$.\footnote{Recall that the conjugacy classes of $\Gl_k(\bC)$ are determined by the Jordan canonical form.}\\
\end{cor}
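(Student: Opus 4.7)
The plan is to deduce this corollary as a direct application of Theorem \ref{thm:monodromyguage} together with two elementary facts: that every rank $k$ complex vector bundle on $S^1$ is trivial, and that $\pi_1(S^1) \cong \bZ$.

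First I would reduce the statement about vector bundles to a statement about principal $\Gl_k(\bC)$-bundles. Given a rank $k$ complex vector bundle $E \to S^1$, the associated frame bundle $\Gl(E) \to S^1$ is a principal $\Gl_k(\bC)$-bundle, and the correspondence $E \leftrightsquigarrow \Gl(E)$ identifies flat connections on $E$ with flat connections on $\Gl(E)$ in a manner respecting gauge equivalence. Moreover, as noted in the remark preceding the corollary, every complex vector bundle on $S^1$ is trivial (it admits a two-chart atlas with clutching function in the path-connected group $\Gl_k(\bC)$), so up to isomorphism we may work with a single bundle and the issue is purely the classification of flat connections on it.

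Next, I would apply Theorem \ref{thm:monodromyguage} with $G = \Gl_k(\bC)$ and $M = S^1$. This gives a bijection between gauge equivalence classes of flat connections and conjugacy classes of homomorphisms $\rho : \pi_1(S^1) \to \Gl_k(\bC)$. Since $\pi_1(S^1) \cong \bZ$ is free on a single generator, such a homomorphism is uniquely determined by the image $\rho(1) \in \Gl_k(\bC)$, and any element of $\Gl_k(\bC)$ arises this way. Two homomorphisms $\rho, \rho'$ are conjugate in the sense of Theorem \ref{thm:monodromyguage} if and only if there exists $g \in \Gl_k(\bC)$ with $\rho'(n) = g\rho(n)g^{-1}$ for all $n$, which for $n=1$ is precisely the statement that $\rho(1)$ and $\rho'(1)$ are conjugate as elements of $\Gl_k(\bC)$. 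Composing the two bijections then yields the desired one-to-one correspondence between gauge equivalence classes of flat connections and conjugacy classes in $\Gl_k(\bC)$.

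There is no real obstacle here; the result is essentially bookkeeping given Theorem \ref{thm:monodromyguage}. The only subtlety worth flagging in a written proof is the passage from vector bundles to their frame bundles and back, which requires observing that gauge transformations of $E$ correspond to $\Gl_k(\bC)$-equivariant bundle automorphisms of $\Gl(E)$, and that flat connections on $E$ (as linear connections on the associated bundle) correspond to flat principal connections on $\Gl(E)$. Once this identification is made explicit, everything else is immediate from $\pi_1(S^1) \cong \bZ$.
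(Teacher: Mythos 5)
Your proposal is correct and follows essentially the same route as the paper: apply Theorem \ref{thm:monodromyguage} with $G=\Gl_k(\bC)$, use $\pi_1(S^1)\cong\bZ$ so that a representation is determined by the image of the generator, and note that conjugacy of representations amounts to conjugacy of those images. The only difference is that you spell out the vector-bundle/frame-bundle dictionary, which the paper leaves implicit.
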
\begin{proof}  $\pi_1(S^1)\cong \bZ$ and any representation $\bZ\to \Gl_k(\bC)$  is determined by the image of $\bid$.  For this same reason, two representations are conjugate if and only if the images of their generators are conjugate.\\ 
\end{proof}

\begin{rem}{\label{rmk:monodromy}The image of $\bid$ is sometimes referred to (e.g. \cite{Kreuss,PZ}) as the ``monodromy operator" or simply the ``monodromy" of the pair connection.  In section \ref{sec:Elliptic}, when discussing the Fukaya category for an elliptic curve, we will restrict our attention to just flat connections (on vector bundles over $S^1$) whose monodromy operators have only eigenvalues of unit modulus.}\end{rem}

\subsubsection{Local Systems}\hfill\\

A \emph{sheaf of locally constant functions} on a topological space, $X$, is mapping (satisfying the sheaf axioms) from the open sets of $X$ to some fixed module.  E.g. the space of constant functions $\{f:X\to \bR \stb f(x) = a \text{ for all }x\in X\}$ forms such a sheaf when we consider all restrictions of these functions to open sets in $X$.  A \emph{local system}\footnote{also often called a \emph{local coefficient system}} is the more general concept of a sheaf which locally looks like a sheaf of constant functions, but globally may be twisted.  For example, considering a flat vector bundle equipped with connection $\nabla$, the space of local sections, $s$, that satisfy $\nabla s = 0$ form a local system.
In mirror symmetry literature the terms flat complex vector bundle (equipped with connection) and \emph{local system} are often used synonymously.\\
We will often use the term ``local system" in this thesis to describe a Lagrangian submanifold equipped with a complex vector bundle and a flat connection.\\

\bigskip\bigskip
\subsection{Holomorphic Vector Bundles}\label{sec:HB}\hfill\\

\begin{defn}
A \emph{holomorphic vector bundle} is a complex line bundle over a complex manifold whose transition functions are holomorphic.
\end{defn}

Some examples are given by the canonical line bundle and the cotangent bundle of any complex manifold.\\

It is important to note that, while every complex vector bundle over a Riemann surface\footnote{This is not true over a general complex manifold.}, admits a holomorphic structure, this structure is not unique.  This does not complicate working with line bundles much as the space of line bundles over a Riemann surface, $C$, (of genus $g$) forms a group, $\pic(C)\cong \bZ\times J(C)$ where $J(C)\cong \bC^g/\bZ^{2g}$ denotes the space of topologically trivial line bundles.  The group structure of the \emph{Picard group}, $\pic(C)$, is given by the tensor product (and thus inversion by the dual space operator).  The degree function gives a homomorphism to $\bZ$ with kernel, the \emph{Jacobian variety}, $J(C)$.\\

In contrast to the complex case, holomorphic vector bundles (over Riemann surfaces) do not all split into sums of holomorphic line bundles.  This makes holomorphic vector bundles much more difficult to work with than their complex analogs.  Luckily in the genus one case these objects have a fairly simple classification discovered by Atiyah \cite{Avbe} (we will state his result in section \ref{sec:EllDb}).\\

Our ability to use Atiyah's classification in a black box manner will be largely aided by the fact that we can pull all holomorphic bundles on $\bC/\bZ^2\cong \bC^*/\bZ$ back to $\bC^*$ where all vector bundles are trivial.\footnote{We pull back to $\bC^*$ as opposed to $\bC$ as a convenience (we then only have to consider a $\bZ$ action).}\\

\begin{thm}
All holomorphic vector bundles on a non-compact Riemann surface are holomorphically trivial.
\end{thm}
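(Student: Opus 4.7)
The plan is to combine the Behnke--Stein theorem (every non-compact Riemann surface is Stein) with the Oka--Grauert principle (on a Stein manifold, topological and holomorphic classifications of complex vector bundles coincide).

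First I would establish, or cite, that $X$ admits a strictly plurisubharmonic exhaustion function. One builds this from an exhausting sequence of relatively compact open subsets together with the solvability of $\bar\partial$ and a Runge-type approximation step; this is the content of Behnke--Stein. In particular Cartan's Theorem B then gives $H^q(X,\mF)=0$ for all $q\geq 1$ and every coherent analytic sheaf $\mF$ on $X$.

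Next I would invoke the Oka--Grauert principle to reduce the holomorphic classification of rank-$n$ complex vector bundles on $X$ to the topological one. Since a non-compact Riemann surface is homotopy equivalent to a $1$-dimensional CW complex (any proper Morse function on $X$ has no index-$2$ critical points, so $X$ deformation retracts onto a graph), and since $\mathrm{GL}_n(\bC)$ is path-connected, every continuous rank-$n$ complex vector bundle on such a $1$-complex is topologically trivial. Combining these two inputs shows that the holomorphic bundle must be holomorphically trivial as well.

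The main obstacle is Behnke--Stein itself, which is a substantial piece of classical function theory; once that is in hand everything downstream is essentially formal. A more hands-on alternative, avoiding the black-box use of Oka--Grauert, would proceed by cases on rank. For line bundles, the exponential sheaf sequence $0\to \bZ\to \cO\to \cO^*\to 0$ yields
\[
H^1(X,\cO)\longrightarrow H^1(X,\cO^*)\longrightarrow H^2(X,\bZ),
\]
and the outer terms vanish (by Cartan B on the left and because $X$ has the homotopy type of a $1$-complex on the right), so $\pic(X)=0$. For higher rank one inducts on $n$: by Theorem B, $E$ carries a non-zero global holomorphic section $s$, and dividing $s$ by the (trivial, since $X$ is Stein) line bundle $\cO(D)$ associated to its zero divisor produces a nowhere-vanishing section and hence a trivial line subbundle $\cO\hookrightarrow E$. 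The quotient $E/\cO$ is rank $n-1$ and trivial by induction, and the resulting extension class sits in $\ext^1(\cO^{\oplus n-1},\cO)\cong H^1(X,\cO)^{\oplus n-1}=0$, so the sequence splits and $E$ is holomorphically trivial.
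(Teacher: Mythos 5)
Your argument is correct, but note that the paper does not prove this statement itself: it only cites Forster (Theorem 30.4), whose proof is an elementary complex-analytic induction on rank. Your primary route --- Behnke--Stein (non-compact Riemann surfaces are Stein), Cartan's Theorem B, the Oka--Grauert principle, and the observation that an open surface has the homotopy type of a $1$-complex so that every topological $\bC^n$-bundle on it is trivial --- is logically sound, but it outsources essentially all of the content to Grauert's Oka principle, which in complex dimension one more or less \emph{is} the statement being proved; what you buy is brevity, what you lose is any sense of why the theorem is true at the level of the paper's cited reference. Your hands-on alternative is much closer in spirit to the cited proof and is the version worth writing out: triviality of line bundles from the exponential sequence using $H^1(X,\cO)=0$ and $H^2(X,\bZ)=0$, then induction on rank by splitting off a trivial line subbundle and killing the extension class in $\ext^1(\cO^{\oplus (n-1)},\cO)\cong H^1(X,\cO)^{\oplus (n-1)}=0$. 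Two small points there should be made explicit in a full write-up: the existence of a nonzero global section follows from Theorem A, or from Theorem B applied to the twist of $E$ by the ideal sheaf of a point (making evaluation at that point surjective), not from Theorem B verbatim; and the zero divisor $D$ of a section of a rank-$n$ bundle must be defined at each zero as the minimum of the vanishing orders of the components in a local frame, so that after dividing by a holomorphic function with divisor exactly $D$ (such a function exists because every divisor on a non-compact Riemann surface is principal, i.e.\ by the Weierstrass theorem you already have from $\pic(X)=0$) the new section is genuinely nowhere vanishing and spans a line subbundle with locally free quotient.
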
\begin{proof}  For a simple complex analytic proof by induction on rank (see theorem 30.4 on p. 229 of \cite{Forster}).\end{proof}
\noindent
We will also later make some minor use of the line bundle-divisor correspondence.\\
A \emph{divisor} on a Riemann surface, $C$, is a finite linear combination of points in $C$ with integer coefficients.  To divisors $D$ and $D'$ are linearly equivalent if there is a meromorphic function, $f$, on $C$ such that $D-D' = ord(f)$.\footnote{By $ord(f)$ we mean the sum of all zeros and poles weighted by their orders.}  A proof of the following fact can be found in any introduction to algebraic geometry or Riemann surface theory (e.g. \cite{VarolinRS}).\\ 
\begin{thm}[Line bundle-Divisor Correspondence]
There is a bijection between holomorphic line bundles and divisors modulo linear equivalence.\\
\end{thm}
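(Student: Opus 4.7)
The plan is to define maps in both directions between the set of divisors modulo linear equivalence and the group $\pic(C)$ of isomorphism classes of holomorphic line bundles, verify that both are well-defined, and then check they are mutually inverse.

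First I would construct the map $D \mapsto \cO(D)$ by giving transition functions. Writing $D = \sum n_i p_i$, I would choose an open cover $\{U_\alpha\}$ of $C$ such that each $U_\alpha$ contains at most one point of the support of $D$, together with, on each $U_\alpha$ containing some $p_i$, a local holomorphic coordinate $z_\alpha$ vanishing at $p_i$. Setting $f_\alpha = z_\alpha^{n_i}$ on such charts and $f_\alpha \equiv 1$ otherwise, the ratios $g_{\alpha\beta} := f_\alpha/f_\beta$ are holomorphic and nowhere vanishing on $U_\alpha \cap U_\beta$, hence define a holomorphic line bundle $\cO(D)$. A standard refinement argument shows the isomorphism class is independent of the chosen cover and coordinates. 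Note that the collection $\{f_\alpha\}$ tautologically defines a meromorphic section $s_D$ of $\cO(D)$ whose divisor is $D$.

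In the other direction, given a holomorphic line bundle $L$, I would pick any nonzero global meromorphic section $s$ of $L$ and send $L$ to the divisor $\mathrm{ord}(s) := \sum_{p \in C} \mathrm{ord}_p(s)\cdot p$. If $s'$ is a second meromorphic section, then $s/s'$ is a global meromorphic function on $C$, so $\mathrm{ord}(s) - \mathrm{ord}(s') = \mathrm{ord}(s/s')$ is a principal divisor, showing the class $[\mathrm{ord}(s)]$ modulo linear equivalence is independent of $s$. For the compositions: starting from $D$, the canonical section $s_D$ of $\cO(D)$ recovers $\mathrm{ord}(s_D) = D$; starting from $L$ with section $s$ and divisor $D$, multiplication by $s$ defines an isomorphism $\cO(D) \xrightarrow{\sim} L$ (locally one checks the transition functions match), giving the other composition.

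The main obstacle is the existence of a global meromorphic section for an arbitrary holomorphic line bundle $L$, which is what makes the assignment $L \mapsto [\mathrm{ord}(s)]$ meaningful. On a compact Riemann surface this is where Riemann--Roch (or equivalent sheaf-cohomological input) enters: for any point $p$ and $n$ large, $h^0(C, L\otimes \cO(np)) > 0$, so $L\otimes \cO(np)$ has a holomorphic section $\sigma$, and then $\sigma/s_{np}$ is the desired meromorphic section of $L$. On a non-compact Riemann surface the previous theorem trivializes $L$ and a Weierstrass/Mittag--Leffler argument produces meromorphic functions with any prescribed divisor, so both sides of the correspondence collapse consistently. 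I would phrase the proof so that this existence statement is the one analytic input invoked, with the rest being formal bookkeeping of transition functions and divisors.
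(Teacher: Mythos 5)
Your argument is correct and is the standard one; the paper itself offers no proof of this statement (it simply cites an introductory Riemann-surface text), and your construction --- transition functions $f_\alpha/f_\beta$ defining $\cO(D)$ with its tautological section $s_D$, the divisor of a meromorphic section in the other direction, and the existence of a nonzero meromorphic section of an arbitrary $L$ (via $h^0(C,L\otimes\cO(np))>0$ for $n$ large) isolated as the one genuine analytic input --- is exactly the proof such references give. The only step you leave tacit is that linearly equivalent divisors yield isomorphic bundles, which is needed for the map out of divisor classes to be well defined; this is a one-line check (if $D-D'=\mathrm{ord}(f)$, multiplication by $f$ gives $\cO(D')\arriso\cO(D)$, or equivalently apply your isomorphism $\cO(\mathrm{ord}(s))\cong L$ to the section $s=f\,s_D$ of $\cO(D)$), so it is a remark to add rather than a gap.
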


\bigskip\bigskip
\subsection{Sections of a holomorphic vector bundles}\hfill\\

The degree of a complex vector bundle is a topological invariant, but is very useful for studying holomorphic bundles.  For example, if a holomorphic line bundle has a global holomorphic section, then it must have a positive degree (by the Cauchy-Riemann equations).  In particular, all global holomorphic sections must vanish at the same number of points, and this number, if nonzero, is equal to the degree.  If the degree is negative, then the line bundle admits no global holomorphic sections.  This can be seen by using the Cauchy-Riemann equations with definition \ref{def:deg}.\\



\begin{thm}[Grothendieck's Vanishing Theorem]
If $\Nu$ is a holomorphic vector bundle over an $n$-dimensional complex manifold, $X$, then $H^k(X,\Nu) = 0$ for $k>n$.\end{thm}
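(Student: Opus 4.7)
The plan is to use Dolbeault's isomorphism to reduce the statement to a cohomological calculation on a complex whose length is bounded in terms of $n$. First I would invoke the Dolbeault resolution of the sheaf of holomorphic sections of $\mathcal{V}$: there is a fine resolution
$$0 \to \mathcal{V} \to \mathcal{A}^{0,0}(\mathcal{V}) \xrightarrow{\bar\partial} \mathcal{A}^{0,1}(\mathcal{V}) \xrightarrow{\bar\partial} \cdots \xrightarrow{\bar\partial} \mathcal{A}^{0,n}(\mathcal{V}) \to 0,$$
where $\mathcal{A}^{0,p}(\mathcal{V})$ denotes the sheaf of smooth $\mathcal{V}$-valued $(0,p)$-forms on $X$ and $\bar\partial$ is the Cauchy--Riemann operator coupled to $\mathcal{V}$ (well defined because transition functions of $\mathcal{V}$ are holomorphic, hence commute with $\bar\partial$). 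Exactness at each stalk is the $\bar\partial$-Poincar\'e lemma, applied componentwise in any local holomorphic trivialization of $\mathcal{V}$.

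Next, I would observe that the sheaves $\mathcal{A}^{0,p}(\mathcal{V})$ are fine: because $X$ is paracompact, smooth partitions of unity exist and act on $\mathcal{V}$-valued forms, so these sheaves are acyclic for the global sections functor. Standard homological algebra (compute derived functors using any acyclic resolution) then yields
$$H^k(X,\mathcal{V}) \;\cong\; H^k\!\left(\,\Gamma(X,\mathcal{A}^{0,0}(\mathcal{V})) \xrightarrow{\bar\partial} \Gamma(X,\mathcal{A}^{0,1}(\mathcal{V})) \xrightarrow{\bar\partial} \cdots \xrightarrow{\bar\partial} \Gamma(X,\mathcal{A}^{0,n}(\mathcal{V}))\,\right).$$
The right-hand complex has its last nonzero term in position $n$, simply because on an $n$-dimensional complex manifold the bundle $\Lambda^{0,p}T^*X$ is the zero bundle for $p>n$. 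Hence the cohomology in every degree $k>n$ is automatically zero, which is the claim.

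The only step that requires genuine work is the $\bar\partial$-Poincar\'e lemma with coefficients in $\mathcal{V}$, i.e.\ exactness of the resolution at the stalk level. After passing to a holomorphic trivialization on a polydisk, this reduces to the scalar statement that any smooth $(0,p)$-form $f$ with $\bar\partial f = 0$ is locally of the form $\bar\partial u$, which is the content of the classical Dolbeault lemma and is proved by induction on the number of complex variables using the one-variable inhomogeneous Cauchy--Riemann equation solved via the Cauchy--Green kernel. Everything else (fineness, the identification of sheaf cohomology with the cohomology of an acyclic resolution, and the dimension count eliminating $(0,p)$-forms for $p>n$) is formal.
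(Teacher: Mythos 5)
Your argument is correct, but it is a genuinely different proof from the one the paper points to. The paper disposes of the statement by citing section III.2 of \cite{HartAlgGeo}, i.e.\ Grothendieck's general vanishing theorem: on a noetherian topological space of dimension $n$, the cohomology of \emph{any} sheaf of abelian groups vanishes above degree $n$; this is proved by purely topological/homological means (reduction to irreducible spaces, induction on dimension, direct limits), with no complex analysis and no hypothesis that the sheaf be locally free. Your route instead is the standard analytic one: resolve the sheaf of holomorphic sections of $\mathcal{V}$ by the fine sheaves $\mathcal{A}^{0,p}(\mathcal{V})$, identify $H^k(X,\mathcal{V})$ with Dolbeault cohomology, and observe that the complex stops at degree $n$ because $\Lambda^{0,p}T^*X=0$ for $p>n$; the only real input is the $\bar\partial$-Poincar\'e lemma, applied in a local holomorphic trivialization, plus paracompactness of $X$ to get fineness and acyclicity. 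Each approach buys something the other does not: Grothendieck's theorem is far more general in the sheaves it allows (arbitrary abelian sheaves, in particular coherent sheaves that are not locally free), but as literally stated it applies to noetherian spaces with dimension $n$ (e.g.\ a variety with its Zariski topology), whereas a complex manifold with its classical topology is not noetherian and has topological dimension $2n$ -- so for the theorem exactly as phrased in this paper (holomorphic vector bundles on complex manifolds), your Dolbeault argument is actually the more directly applicable and self-contained proof, at the cost of working only for locally free coefficients. Since the paper only ever uses the statement for vector bundles on Riemann surfaces, your proof fully covers what is needed.
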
\begin{proof}A proof of this can be found in section III.2 of \cite{HartAlgGeo}.\end{proof}

\begin{rem}
This means on a Riemann surface, we know $H^k(C,\Nu) = 0$ unless $k=0,1$.  Both of these spaces also have nice descriptions.  $H^0(C,\Nu) = \Gamma(\Nu)$ (by the definition of sheaf cohomology) and by the below Serre Duality, $H^1(C,\Nu) \cong H^0(C,K_C\otimes \Nu^*)$.  In the elliptic curve case, we then have that $H^1(\Nu)\cong \Gamma(\Nu^*) = \Hom(\Nu,\cO_C)$.\\
It is also worth noting that $H^1(X,\cO^*_X)$ parameterizes holomorphic line bundles, where by $\cO^*$ we mean the sheaf of non-vanishing holomorphic functions.  A similar statement can be made for real and complex line bundles.
\end{rem}

\begin{thm}[Riemann-Roch Theorem]
If $\Nu$ is a holomorphic vector bundle over a Riemann surface $C$, then $\chi(C,\Nu) = \deg(\Nu) + (1-g)rank(\Nu)$.
\end{thm}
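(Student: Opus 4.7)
The plan is to reduce the general statement to the line-bundle case and then reduce the line-bundle case to the structure sheaf, in each step exploiting the additivity of both sides of the claimed formula under short exact sequences. Recall that Euler characteristic is additive along a short exact sequence $0\to \mathcal{A}\to \mathcal{B}\to \mathcal{C}\to 0$ of coherent sheaves (from the long exact sequence in cohomology together with Grothendieck's vanishing, which truncates it after $H^1$), and the degree and rank are manifestly additive. So the strategy is to set up enough short exact sequences to propagate one known case to all of them.

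First, I would establish the base case $\chi(C,\cO_C)=1-g$. Here $h^0(\cO_C)=1$ since $C$ is compact and connected, while $h^1(\cO_C)=g$ by Serre duality (which identifies $H^1(C,\cO_C)$ with $H^0(C,K_C)^*$, whose dimension is $g$ by definition of the genus). Next I would handle line bundles. Using the line bundle–divisor correspondence recalled just above the statement, write any line bundle $L$ as $\cO(D)$ for a divisor $D=\sum n_i p_i$, and induct on $\sum |n_i|$. The engine is the ``skyscraper sequence'' obtained by tensoring $0\to \cO(-p)\to \cO\to \mathbb{C}_p\to 0$ with $\cO(D)$, namely
\begin{equation*}
0\longrightarrow \cO(D-p)\longrightarrow \cO(D)\longrightarrow \mathbb{C}_p\longrightarrow 0.
\end{equation*}
Additivity of $\chi$ gives $\chi(\cO(D))=\chi(\cO(D-p))+1$, and additivity of degree gives $\deg \cO(D)=\deg \cO(D-p)+1$, so the formula $\chi(L)=\deg(L)+1-g$ propagates along the induction.

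For the general rank-$r$ case I would induct on $r$. The key geometric input is that every holomorphic vector bundle $\Nu$ on a Riemann surface admits a holomorphic line subbundle $L\subset \Nu$: take any nonzero meromorphic section (which exists because $\Nu$ twisted by a sufficiently positive line bundle has sections, by the line-bundle Riemann--Roch already proven), and saturate the line subsheaf it generates. The quotient $\Nu/L$ is then locally free of rank $r-1$, and the short exact sequence
\begin{equation*}
0\longrightarrow L\longrightarrow \Nu\longrightarrow \Nu/L\longrightarrow 0
\end{equation*}
yields, by additivity of $\chi$, $\deg$, and $\mathrm{rank}$,
\begin{equation*}
\chi(\Nu)=\chi(L)+\chi(\Nu/L)=\bigl(\deg L+1-g\bigr)+\bigl(\deg(\Nu/L)+(r-1)(1-g)\bigr)=\deg(\Nu)+r(1-g),
\end{equation*}
using the rank-one case for $L$ and the inductive hypothesis for $\Nu/L$.

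The main obstacle is the existence of the line subbundle, which is the only nontrivial geometric step; everything else is additivity bookkeeping. In this exposition I would either cite the saturation argument outlined above or appeal to the stronger theorem that every holomorphic vector bundle on a Riemann surface admits a full filtration by line subbundles, so that the induction reduces to a single application of additivity along each graded piece.
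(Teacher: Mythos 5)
Your argument is essentially correct, but it is worth noting that the paper does not prove this statement at all: it simply cites a direct proof in Gunning's lecture notes. So your dévissage proof is a genuinely different (and self-contained) route: base case $\chi(\cO_C)=1-g$, propagation to all line bundles via the skyscraper sequence $0\to\cO(D-p)\to\cO(D)\to\bC_p\to 0$ and additivity of $\chi$, $\deg$, $\mathrm{rank}$ along short exact sequences, then induction on rank via a line subbundle with locally free quotient. What it buys is transparency about which inputs are really needed (finite-dimensionality of cohomology, Grothendieck vanishing to truncate the long exact sequence, Serre duality for the base case); what the citation buys the paper is brevity. One step of yours is stated too loosely: the existence of a nonzero meromorphic section of $\Nu$ does not follow ``by the line-bundle Riemann--Roch already proven,'' since that theorem says nothing about $h^0$ of the rank-$r$ bundle $\Nu\otimes\cO(Np)$. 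The standard repair stays entirely inside your own machinery: the sequences $0\to\Nu((n-1)p)\to\Nu(np)\to\bC_p^{\oplus r}\to 0$ show that $\chi(\Nu(np))=\chi(\Nu)+nr$ grows linearly while the maps $H^1(\Nu((n-1)p))\to H^1(\Nu(np))$ are surjective, so $h^1$ is non-increasing and hence $h^0(\Nu(np))>0$ for $n$ large; any such section is a meromorphic section of $\Nu$, whose saturation gives the desired line subbundle (torsion-free quotients on a smooth curve are locally free, as you say). Alternatively, citing the classical filtration of any bundle on a curve by line subbundles is acceptable. Finally, be explicit that your base case uses $\dim H^0(C,K_C)=g$, i.e.\ the identification of the arithmetic and topological genus, as an input rather than a definition if $g$ is meant topologically.
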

\noindent
Here $\chi$ is the Euler characteristic of $\Nu$, defined as $\chi(\Nu): = \sum_k(-1)^kh^k(\Nu)$ for\\ $h^k(\Nu) := \dim(H^k(C,\Nu))$.\\
\begin{proof}This particular statement is sometimes called ``Weil's Riemann-Roch theorem" and has a generalization to higher dimensional compact complex manifolds called the Hirzebruch-Riemann-Roch theorem.  A direct proof of this particular statement can be found on page 65 of \cite{gunning1967lectures}.\end{proof}

\begin{thm}[Serre Duality]
If $\Nu$ is a holomorphic vector bundle over an $n$-dimensional compact complex manifold, $X$, then $H^k(X,\Nu) \cong H^{n-k}(X,K_X\otimes\Nu^*)^*$. 
\end{thm}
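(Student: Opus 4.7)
The plan is to prove Serre duality via the Dolbeault realization of sheaf cohomology combined with Hodge theory for the $\pb$-Laplacian. First I would invoke the Dolbeault isomorphism to replace $H^k(X,\Nu)$ with the Dolbeault cohomology $H^{0,k}_{\pb}(X,\Nu)$ of smooth $(0,k)$-forms with values in $\Nu$, and likewise replace $H^{n-k}(X, K_X\otimes\Nu^*)$ with $H^{0,n-k}_{\pb}(X, K_X\otimes\Nu^*) = H^{n,n-k}_{\pb}(X,\Nu^*)$. The problem is thus translated into analysis on smooth forms.

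Next I would construct an explicit pairing. The evaluation map $\Nu\otimes\Nu^*\to\cO_X$ composed with the wedge product sends $(\alpha,\beta)\in A^{0,k}(X,\Nu)\times A^{n,n-k}(X,\Nu^*)$ to an ordinary $(n,n)$-form $\langle\alpha,\beta\rangle$ on $X$, which can be integrated to give
\[
(\alpha,\beta) \;:=\; \int_X \langle\alpha,\beta\rangle.
\]
Because $\pb\langle\alpha,\beta\rangle = \langle\pb\alpha,\beta\rangle \pm \langle\alpha,\pb\beta\rangle$ and any $(n,n)$-form is automatically $\pb$-closed of top degree, Stokes' theorem shows the pairing descends to the Dolbeault cohomology groups, producing a linear map $H^{0,k}_{\pb}(X,\Nu)\to H^{n,n-k}_{\pb}(X,\Nu^*)^*$. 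All that remains is to show this map is an isomorphism.

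For that I would use Hodge theory. Fix a Hermitian metric on $X$ and a Hermitian metric on $\Nu$; these induce the formal adjoint $\pb^*$ and the Dolbeault Laplacian $\Delta_{\pb} = \pb\pb^* + \pb^*\pb$, an elliptic second-order operator on the bundles $\wedge^{0,k}T^*X\otimes\Nu$. Since $X$ is compact, elliptic regularity and the Fredholm alternative give the Hodge decomposition
\[
A^{0,k}(X,\Nu) \;=\; \mathcal{H}^{0,k}(\Nu) \;\oplus\; \pb\bigl(A^{0,k-1}(X,\Nu)\bigr) \;\oplus\; \pb^*\bigl(A^{0,k+1}(X,\Nu)\bigr),
\]
together with a canonical identification $H^{0,k}_{\pb}(X,\Nu)\cong \mathcal{H}^{0,k}(\Nu)$ by taking harmonic representatives. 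The chosen metrics also furnish a conjugate-linear Hodge star $\bar *\colon A^{0,k}(X,\Nu)\to A^{n,n-k}(X,\Nu^*)$ that intertwines the two Laplacians and therefore sends harmonic forms to harmonic forms. Since $(\alpha,\bar *\alpha) = \|\alpha\|_{L^2}^2 > 0$ for any nonzero $\alpha$, the pairing is non-degenerate on harmonic representatives, which is exactly what is needed to conclude $H^{0,k}_{\pb}(X,\Nu)\cong H^{n,n-k}_{\pb}(X,\Nu^*)^*$.

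The main obstacle is the analytic input in the third paragraph: establishing the Hodge decomposition for $\pb$ with coefficients in a holomorphic bundle requires the full machinery of elliptic operators on compact manifolds (Gårding's inequality, elliptic regularity, finite-dimensionality of the kernel). Keeping the sign conventions straight so that the Hodge-star image $\bar *\alpha$ pairs with $\alpha$ to give the $L^2$ norm, and checking compatibility with the evaluation map $\Nu\otimes\Nu^*\to\cO_X$, is the main bookkeeping step; once these are in place, the rest is essentially formal.
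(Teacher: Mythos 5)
Your proposal is correct, and it is essentially the standard Hodge-theoretic proof of Kodaira--Serre duality (Dolbeault isomorphism, the integration pairing, and non-degeneracy via harmonic representatives and the conjugate-linear star) that the paper defers to by citing section 4.1 of Huybrechts and section 1.2 of Griffiths--Harris rather than proving itself. The only point worth making explicit is that $(\alpha,\bar*\alpha)=\|\alpha\|^2>0$ gives injectivity of the map into the dual, and you then use that $\bar*$ is a conjugate-linear bijection between the two finite-dimensional harmonic spaces to conclude it is an isomorphism.
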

\noindent
Here $K_X$ denotes the canonical line bundle of $X$ (i.e. $K_X := \bigwedge^{n,0}X$).
\begin{proof}
A proof of this statement of Serre duality, sometimes called ``Kodaira-Serre duality", can be found in section 4.1 of \cite{huybrechts2004complex} or section 1.2 of \cite{GHAlGeo}.
\end{proof}

\begin{rem}
\label{rmk:degdim}
Recall that on any Calabi-Yau manifold, $K_X$ is the trivial bundle.  Combining these three theorems, we have, for any holomorphic vector bundle, $\Nu$, over a genus one compact Riemann surface, $C$, $\deg(\Nu) = h^0(\Nu) - h^1(\Nu) = h^0(\Nu) - h^0(\Nu^*)$.
In particular, if $\Nu$ is rank one, then $\deg(\Nu)= \pws{h^0(\Nu) & \text{if } \deg(\Nu) > 0\\-h^0(\Nu^*)&\text{if } \deg(\Nu)<0}$.
\end{rem}

\newpage
\section{A Review of Elliptic Curves}

We define an elliptic curve to be the Riemann surface given by the quotient $\bC/\Lambda$ and its inherited complex structure, for $\Gamma$ some $\bZ^2$ lattice.  All complex structures (up to biholomorphic equivalence) on a smooth 2-torus are given in such a way.  Choosing $\Lambda = <1,\tau>$, we get a bijective relationship between elliptic curves and choices of $\tau$ with $\im(\tau)>0$ up to modular transformation.  An explanation of these facts can be found in nearly any introductory text discussing the theory of Riemann surfaces (e.g. \cite{Miranda,Farkas}).  Keeping this bijection in mind, we will often use the notation $E_\tau:=\bC/<1,\tau>$ and will always assume, for reasons of convenience, $\im(\tau)>0$.\\

The complex, metric, and symplectic structures on $\bC$ are all preserved by translations.  Thus they all descend to $\bC/\Lambda$.  We will talk about this in more detail below, but for now let us just show we have a holomorphic atlas.\\

\bigskip\bigskip
\subsection{Complex structures on Elliptic curves}\hfill\\

Observing that the quotient map $\bC\xrightarrow{\pi} \bC/\Lambda$ is open, we are given an holomorphic atlas $\bigcup_{z_0\in \bC} \{(\pi(B_{\epsilon}(z_0)),\evat{\pi}_{\pi(B_{\epsilon} (z_0))}^{-1})\}$ where $\epsilon$ is some small number such that $\epsilon<|\lambda|$ for all $\lambda \in \Lambda$.\\
Our transition functions then differ from the identity map by an element of $\Lambda$ and thus are holomorphic.\\


\rem{Sometimes we will find it convenient to reparameterize our curve as $E_{\tau} = \bC^*/\bZ =: E_q$ using the homomorphism $u:\bC\to \bC^*$ given by $u:z\mapsto e^{2\pi i z}$ so that the quotient is over multiplication by $q=e^{2\pi i
\tau}$.  Throughout our discussion of HMS for the elliptic curve, $\tau$ and $q$ will always be assumed to have this relationship, $q=e^{2\pi i\tau}$.}

\bigskip\bigskip
\subsection{The Calabi-Yau Structure of an Elliptic curve}\hfill\\

As the standard Hermitian structure on $\bC$ is invariant under translations, it descends to a Hermitian structure on $\bC/\Lambda$.  The Hermitian metric's associated \kah{} form is automatically closed (being a top form) and compatible with our descendant complex structure.  Thus $\bC/\Lambda$ is a \kah{} manifold.\\

Let $\xi_{z_0}  = \evat{\pi}_{B_\epsilon(z_0)}$ denote our local coordinate charts used above.  Notice that $d\ze$ is well-defined globally; this tells us $\bC/\Lambda$ is a Calabi-Yau Manifold.\\
From theorem \ref{thm:CY}, we then know that given a \kah{} structure on $\Et$, there is a unique cohomologous Ricci-flat \kah{} structure.\\
As $H^2(\Et,\bR) = \bR$, we see that all Ricci-flat \kah{} structures on $\Et$ must be the positive\ftmark{} scalar multiples of the one we've pulled down from $\bC$.  Thus our \kah{} structure on complex manifold $\Et$ is determined by its volume, which we will denote by $A$.\\
\fttext{While a negative multiple would still give a symplectic form, it would not be compatible with a metric.  Better said, the sign of the symplectic volume corresponds to whether the orientation of our symplectic structure agrees with that of our Riemannian.  It will become apparent later that this convention, $A>0$, on the symplectic structure can be see to mirror the convention that $\im(\tau)>0$ on the complex structure.}

Choosing the \kah{} structure on $\Et$ given by a volume of $A$, we have: 
$$\begin{array}{rcl}
\omega &=& \text{vol}_g = Adx\wedge dy = \frac{i}{2}Ad\ze\wedge d\zeb\\
g &=& A(dx\otimes dy + dy\otimes dx) = -\frac{i}{2}A(d\ze\otimes d\ze -d\zeb\otimes d\zeb )\\
\end{array}$$

As $\Et$ is compact, it has no non-constant holomorphic functions.  Thus all CY forms are constant multiples of $d\ze$.  This can of course be generalized to any Calabi-Yau manifold.\\
To construct the Fukaya category, we will use the fact that our manifold is Calabi-Yau/\kah{} often, and sometimes it is even helpful to fix a Calabi-Yau form (e.g. see appendix \ref{sec:phaseL}) or complex structure;  the final result, however, will only depend on our choice of symplectic structure and B-field (see section \ref{sec:Elliptic} for a definition of B-fields).\\


\section{Holomorphic Line Bundles on Elliptic Curves}
\subsection{Theta Functions}\hfill\\

As $E_\tau$ is compact, any holomorphic function, $E_\tau\to \bC$, is constant.  We can construct all meromorphic functions on $E_\tau$ in a similar fashion to how one would on projective space.  In this analogy, the role of homogeneous polynomials are played by theta functions. That said, we are particularly interested in theta functions because they will give us bases for our B-side homsets.\\

Consider the classical Jacobi theta function $\thu:\bC\to \bC$ given by 
$$\thu(z):=\sum_{m\in \bZ}e^{\pi i [m^2\tau + 2mz]} = \sum_{m\in \bZ}q^{\frac{1}{2}m^2}u^m$$
As we are always assuming $\im(\tau)>0$, this converges for all $z\in \bC$.\\ 

\begin{prop}\hfill\\
\label{thm:thetaprops}
\begin{itemize}
\item[i)] $\thu(z+1) = \thu(z)$
\item[ii)] $\thu(z+\tau) = e^{-\pi i[\tau+2z]}\thu(z) = q^{-1/2}u\inv\theta_q(u)$
\item[iii)] $\thu(z)=0\:\iff\: z = \frac{1}{2}+\frac{\tau}{2} + (k+\ell\tau)$ for $k,\ell \in \bZ$ (i.e. $\theta_q(u) = 0\rlimp u = -q^{\frac{1}{2}+\ell}$) and these zeros are simple.
\item[iv)] $\thu(-z) = \thu(z)$
\item[v)] $\thu(\frac{\tau}{2} - z) = e^{2\pi i z}\thu(\frac{\tau}{2} + z)$
\end{itemize}
\end{prop}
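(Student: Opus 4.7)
My plan is to treat (i), (ii), (iv), and (v) as bookkeeping on the defining series, and reserve the real work for (iii), which will require a residue-counting argument on a fundamental parallelogram.

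First I would handle (i) by noting that in the exponent $e^{\pi i[m^2\tau + 2m(z+1)]}$ the shift $z\mapsto z+1$ introduces a factor $e^{2\pi i m}=1$ for every $m\in\bZ$, so the sum is unchanged. For (ii) I would rewrite
\[
e^{\pi i[m^2\tau + 2m(z+\tau)]}=e^{\pi i[(m+1)^2\tau + 2(m+1)z]}\cdot e^{-\pi i[\tau+2z]},
\]
pull the constant factor outside the sum, and re-index $n=m+1$ to recover $\thu(z)$. For (iv) I would just substitute $m\mapsto -m$, which is a bijection of $\bZ$ and leaves $m^2\tau$ invariant while flipping the sign of the $2mz$ term, matching $\thu(-z)$. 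Then (v) drops out formally: by (iv), $\thu(\tfrac{\tau}{2}-z)=\thu(z-\tfrac{\tau}{2})$, and applying (ii) to $\thu((z-\tfrac{\tau}{2})+\tau)$ yields $\thu(z+\tfrac{\tau}{2})=e^{-2\pi i z}\thu(z-\tfrac{\tau}{2})$, whence rearranging and using (iv) once more gives the claim.

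The substantive step is (iii). The strategy is to count zeros of $\thu$ in a fundamental parallelogram $P$ with vertices $0,1,1+\tau,\tau$ (translated by a small $\varepsilon\in\bC$ if needed to push zeros off the boundary) via the argument principle. Since $\thu$ is entire, $\frac{1}{2\pi i}\oint_{\partial P}\frac{\thu'(z)}{\thu(z)}\,dz$ equals the number of zeros (with multiplicity) inside $P$. On the two horizontal sides, (i) gives $\frac{\thu'(z+1)}{\thu(z+1)}=\frac{\thu'(z)}{\thu(z)}$, so their contributions cancel by opposite orientation. On the two vertical sides, differentiating the logarithm of the functional equation in (ii) yields
\[
\frac{\thu'(z+\tau)}{\thu(z+\tau)}=\frac{\thu'(z)}{\thu(z)}-2\pi i,
\]
so the net contribution of the two vertical sides is $\int_0^1 2\pi i\,dz=2\pi i$. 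Dividing by $2\pi i$, exactly one zero (counted with multiplicity) lies in $P$.

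It remains to exhibit this zero and see that it is simple. Setting $z=\tfrac12$ in (v) and using (i) gives $\thu\!\left(\tfrac{1+\tau}{2}\right)=-\thu\!\left(\tfrac{1+\tau}{2}\right)$, hence $\thu$ vanishes at $\tfrac{1+\tau}{2}$. Since the total multiplicity in $P$ is $1$, this zero is simple; translating by $\Lambda$ via (i) and (ii) (each of which only multiplies $\thu$ by a nowhere-vanishing factor) propagates it to the full lattice of simple zeros $\tfrac{1+\tau}{2}+\bZ+\tau\bZ$, and rules out any additional zeros. The main obstacle is really just making the contour-integration argument airtight — in particular, justifying the $\varepsilon$-perturbation of $\partial P$ so that no zero lands on the boundary, and carefully tracking orientations in the cancellation/non-cancellation of the four sides.
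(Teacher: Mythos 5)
Your proposal is correct and follows essentially the same route as the paper: (i), (ii), (iv) by direct manipulation and re-indexing of the defining series, (v) by combining (ii) and (iv), and (iii) by the argument-principle count of $\frac{1}{2\pi i}\oint_{\partial P}\thu'/\thu$ over a fundamental parallelogram (the paper's stated method), supplemented by locating the zero at $\frac{1+\tau}{2}$. One cosmetic slip: the pair of sides that cancel via (i) are the two edges related by the translation $z\mapsto z+1$ (the edges parallel to the $\tau$-direction), while the net $2\pi i$ via (ii) comes from the pair related by translation by $\tau$ (the edges parallel to the real axis), so your labels ``horizontal'' and ``vertical'' are interchanged; the computation and conclusion are unaffected.
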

\begin{proof}\hfill\\
(i) and (iv) are trivially true.\\
(ii) can be seen by a shift of the sum index (sending $m\to m-1$).\\
(iii) can be seen be showing the integral of $\theta'/\theta$ around a fundamental parallelogram is equal to $1/(2\pi i)$.\\
(v) can be seen by combining (ii) and (iv).
\end{proof}

Letting $\thux$ denote the translation $\thu$ with zeros at $x\,\,+<1,\tau>$\\ (i.e. $\thux(z):=\theta(z-1/2-\tau/2 - x)$, we have the following result:
Theta functions are analogous to homogenous polynomials on projective space in that all meromorphic functions on an elliptic curve can be described by their ratios.
\begin{thm}  A function $R:E_\tau\to \bC$ is meromorphic if and only if it can be written in the form
$$R(z) = \frac{\prod_{i=1}^d{\thu^{(x_i)}(z)}}{\prod_{i=1}^d{\thu^{(y_i)}(z)}}$$
for $\{x_i\}$ and $\{y_i\}$ any finite sets of $d>0$ complex numbers such that $\sum_i x_i -\sum_i y_i = \in \bZ$.
\end{thm}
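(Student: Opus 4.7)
The plan is to prove both directions by reducing everything to the divisor calculus on $E_\tau$ and tracking the quasi-periodicity factor of $\thu$ given by proposition 4.1.1.

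First I would handle the easy direction ($\Leftarrow$). Given data $\{x_i\},\{y_i\}$ with $\sum x_i - \sum y_i \in \bZ$, let $F(z) = \prod_i \thux^{(x_i)}(z)/\prod_i \thux^{(y_i)}(z)$. Since each $\thux$ descends periodicity (i) trivially, $F$ is $1$-periodic in $z$. For the shift $z \mapsto z + \tau$, I would translate property (ii) to $\thux^{(x)}$, obtaining
\[
\thux^{(x)}(z+\tau) = -e^{-2\pi i(z-x)}\,\thux^{(x)}(z),
\]
and then compute the ratio's transformation: the $(-1)^d$ factors and the $e^{-2\pi i z}$ factors cancel between numerator and denominator (they have equal cardinality $d$), leaving the multiplier $e^{2\pi i(\sum x_i - \sum y_i)}$. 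The hypothesis $\sum x_i - \sum y_i \in \bZ$ is exactly what makes this equal $1$, so $F$ descends to a well-defined meromorphic function on $E_\tau$ with zeros at $\{x_i\}$ and simple poles at $\{y_i\}$ (using (iii) for the location and order of zeros of $\thu$).

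For the nontrivial direction ($\Rightarrow$), take a nonconstant meromorphic $R$ on $E_\tau$ with zero divisor $\sum x_i$ and pole divisor $\sum y_j$ (counted with multiplicity). The standard contour integral of $dR/R$ around a fundamental parallelogram shows (a) $\#\{x_i\} = \#\{y_j\} =: d$, and (b) $\sum x_i \equiv \sum y_j \pmod{\Lambda}$ (Abel's theorem for $E_\tau$). So I can pick lifts to $\bC$ with $\sum x_i - \sum y_j = m + n\tau$ for some $m,n\in\bZ$. By replacing one representative $x_d$ with $x_d - n\tau$ (which is in the same class mod $\Lambda$, hence the divisor is unchanged), I arrange $\sum x_i - \sum y_j \in \bZ$. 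Forming $F$ as in the first part, the function $R/F$ is then holomorphic on the compact $E_\tau$, hence constant; absorbing that constant into the ratio (e.g.\ by rescaling via a ratio $\thux^{(x_1)}(a)/\thux^{(x_1)}(a)$ trick, or simply noting the statement is to be read up to multiplicative constants) completes the proof.

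The main obstacle is the bookkeeping in the forward direction, and in particular the adjustment step: Abel only gives the divisor relation modulo $\Lambda$, whereas the well-definedness condition demands equality modulo $\bZ$, so one must use the freedom to shift a representative by an element of $\Lambda$ without altering the divisor, and then separately observe that such a shift is harmless because $\thux^{(x)}$ only depends on $x$ modulo $\Lambda$ up to a nowhere-vanishing exponential factor which gets absorbed into the overall constant. Managing that exponential factor cleanly is the one computational point that needs care.
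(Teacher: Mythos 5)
Your overall route is the same as the paper's: the paper's own proof consists of exactly your quasi-periodicity computation (checking invariance under $z\mapsto z+\tau$, where the multiplier $e^{2\pi i(\sum_i x_i-\sum_i y_i)}$ appears and the integrality hypothesis is used) together with a citation to \cite{Miranda} for the converse, and the argument you supply for that converse --- counting zeros and poles with $\oint R'/R$, getting $\sum x_i\equiv\sum y_i \pmod{\Lambda}$ from $\oint z\,R'/R$, shifting one representative by $n\tau$ so the difference lands in $\bZ$, and comparing $R$ with the theta quotient $F$ on the compact curve --- is precisely the standard proof the paper is outsourcing. Your computation $\thu^{(x)}(z+\tau)=-e^{-2\pi i(z-x)}\thu^{(x)}(z)$ and the resulting multiplier are correct.

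The one step that does not work as written is the absorption of the constant at the end. After concluding $R=cF$, multiplying by $\thu^{(x_1)}(a)/\thu^{(x_1)}(a)$ is multiplying by $1$ and changes nothing; and since the theorem asserts an exact equality (not equality up to scale), simply declaring the statement to be ``up to multiplicative constants'' does not prove it. Note also that re-choosing lattice representatives of the given zeros and poles only rescales $F$ by constants of the special form $e^{2\pi i n(y_j-x_i)+\cdots}$, a countable set determined by the divisor of $R$, so that freedom alone need not reach a general $c$. A clean fix: append a cancelling pair, i.e.\ multiply the numerator by $\thu^{(a+\tau)}(z)\,\thu^{(b-\tau)}(z)$ and the denominator by $\thu^{(a)}(z)\,\thu^{(b)}(z)$. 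This leaves the divisor on $\Et$ and the sum condition unchanged (the numerator and denominator sums both increase by $a+b$), while multiplying the quotient by the constant $e^{2\pi i(b-a-\tau)}$, which realizes any prescribed $c\in\bC^\ast$ by choice of $b-a$. The same device with $d=2$ also handles nonzero constant $R$, a case your argument excludes by assuming $R$ nonconstant but which the statement, requiring $d>0$, must still cover.
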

\begin{proof}
Showing that any ratio of theta functions is well-defined (and thus meromorphic) on $\Et$ if and only if it meets the above criteria can be seen simply by checking when such ratios are invariant under translation by $\tau$; a straight-forward computation.\\  A simple proof that all meromorphic functions are such ratios can be found in \cite{Miranda}.
\end{proof}

We are interested in theta functions for the reason that they descend to holomorphic sections of line bundles on $\Et$ and, as we will see below, will give bases for our B-side homsets.  As we construct our mirror functor in the following sections, we will need to introduce the following slight generalization of $\theta$.
$$\theta[a,z_0](\tau,z) = \sum_{m\in \bZ}e^{\pi i [(m+a)^2\tau + 2(m+a)(z+z_0)]}$$

\noindent Notice that $$\theta[a,0](\tau,z) = e^{\pi i [a^2\tau + 2az]}\theta_\tau(z+a\tau)$$ so properties (i) and (ii) still hold.  In particular $$\theta[a,0](\tau,z+\tau) = e^{-\pi i[\tau+2z]}\theta[a,0](\tau,z).$$\\
Also later we will need to use that 
$$\theta[a+b,0](\tau,z) = e^{\pi i [a^2\tau + 2az]}\theta[b,\tau z](\tau,z+a\tau).$$

Thinking of theta functions as line bundle morphisms, the following \emph{addition formula} (II.6.4 of \cite{MumTata}) will all allow us to compute compositions on the B-side.\\
\begin{prop}\label{thm:thetaadd}  Let $a,b\in \bQ$, $n_1,n_2\in \bZ_{\geq 1}$, $k = n_1+n_2$, and $c_j = jn_1+a+b$.
$$\begin{aligned}\theta\left[a/n_1,0\right]&(n_1\tau,z_1)\cdot\theta\left[b/n_2,0\right](n_2\tau,z_2)\\
 &= \sum_{j\in \bZ/k\bZ}\theta\left[\frac{c_j}{k},0\right](k\tau,z_1+z_2)\cdot \theta\left[\frac{n_2c_j-kb}{n_1 n_2 k},0\right](n_1n_2k\tau,n_2z_1-n_1z_2)
\end{aligned}$$
\end{prop}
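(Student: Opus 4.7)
The plan is to prove the addition formula by expanding both sides as (absolutely convergent) double series in the summation variables and exhibiting an explicit bijective change of variables between the two index sets that matches terms one to one. The absolute convergence is guaranteed by $\im(\tau) > 0$, so all rearrangements are justified from the outset.

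First I would write out the LHS as
\[
\sum_{m_1, m_2 \in \bZ} \exp\!\Bigl(\pi i\bigl[(m_1 + a/n_1)^2 n_1 \tau + 2(m_1 + a/n_1)z_1 + (m_2 + b/n_2)^2 n_2 \tau + 2(m_2 + b/n_2)z_2\bigr]\Bigr),
\]
and write the RHS similarly as a sum indexed by $j \in \bZ/k\bZ$ together with two integer indices $(r_1, r_2)$ coming from expanding the two theta factors. The goal is then to find a linear change of variables $(j, r_1, r_2) \mapsto (m_1, m_2)$ that (a) is a bijection between $(\bZ/k\bZ) \times \bZ^2$ and $\bZ^2$, and (b) matches exponents term by term.

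To locate the change of variables, I would compare the coefficients of $z_1$ and $z_2$ on both sides, since these are linear in the indices and therefore easiest to match. A short computation with the definition $c_j = j n_1 + a + b$ shows that the coefficient of $z_1$ on the RHS simplifies to $2\pi i (r_1 + n_2 r_2 + j + a/n_1)$ and that of $z_2$ to $2\pi i(r_1 - n_1 r_2 + b/n_2)$. Matching these against the LHS forces
\[
m_1 = r_1 + n_2 r_2 + j, \qquad m_2 = r_1 - n_1 r_2.
\]
The associated integer matrix has determinant $-k$, so for each fixed $j$ the map $(r_1, r_2) \mapsto (m_1, m_2)$ is a bijection onto a sublattice of index $k$ in $\bZ^2$, and a short argument shows that as $j$ runs through $\bZ/k\bZ$ these sublattices partition $\bZ^2$. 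This gives the required bijection.

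The main obstacle, and the only genuine calculation, will be verifying that the $\tau$-coefficient also matches under the substitution. Concretely, one must show that
\[
(m_1 + a/n_1)^2 n_1 + (m_2 + b/n_2)^2 n_2 = (r_1 + c_j/k)^2 k + \bigl(r_2 + (n_2 c_j - k b)/(n_1 n_2 k)\bigr)^2 n_1 n_2 k
\]
when $m_1, m_2$ are substituted by the formulas above. This is a symmetric quadratic identity in $(r_1, r_2)$ depending on the parameters $a, b, j, n_1, n_2$, so I would check it by expanding both sides and comparing the three monomials $r_1^2$, $r_1 r_2$, $r_2^2$ (which are independent of $a, b, j$), the two linear terms in $r_1$ and $r_2$, and the constant term; the cross-term $r_1 r_2$ gives $2(n_2 - n_1) r_1 r_2$ from the left and the same from the right after using $k = n_1 + n_2$, while the constant term is where the specific combination $c_j/k$ justifies the appearance of $c_j$ in the statement. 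Once these three sets of coefficients match, the identity of exponents is complete and, combined with the bijectivity of the index substitution, yields the formula term by term.
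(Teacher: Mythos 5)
Your approach is correct, and it is worth noting that the paper gives no proof of this proposition at all --- it simply cites II.6.4 of Mumford's Tata Lectures --- so your direct series-rearrangement argument supplies exactly what the paper omits (and is essentially the standard proof one finds in that reference). Your change of variables is right: matching the $z_1$- and $z_2$-coefficients does force $m_1=r_1+n_2r_2+j$ and $m_2=r_1-n_1r_2$, and since $m_1-m_2=kr_2+j$, each fixed residue $j$ accounts exactly for the pairs with $m_1-m_2\equiv j \pmod{k}$, so the $k$ classes partition $\bZ^2$ as required; you should also record that the RHS summand is unchanged under $j\mapsto j+k$ (both characteristics shift by integers, and $\theta[a,0]$ is invariant under $a\mapsto a+1$), which is what makes the sum over $\bZ/k\bZ$ well defined.

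Two remarks on your final step. First, the quadratic check is easier than you suggest: setting $P=r_1+c_j/k$ and $Q=r_2+(n_2c_j-kb)/(n_1n_2k)$, the linear matching you already performed says precisely $m_1+a/n_1=P+n_2Q$ and $m_2+b/n_2=P-n_1Q$, and then
$$n_1(P+n_2Q)^2+n_2(P-n_1Q)^2=(n_1+n_2)P^2+n_1n_2(n_1+n_2)Q^2=kP^2+n_1n_2kQ^2$$
is a one-line identity of quadratic forms (the cross terms cancel), so no coefficient-by-coefficient expansion in $(r_1,r_2,a,b,j)$ is needed. Second, your stated cross-term value $2(n_2-n_1)r_1r_2$ is a slip: the $r_1r_2$-coefficient is in fact $0$ on both sides (on the left $2n_1n_2-2n_1n_2=0$; on the right $P$ involves only $r_1$ and $Q$ only $r_2$), so nothing breaks, but the displayed value is wrong. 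With these points, and absolute convergence from $\im(\tau)>0$ justifying the rearrangement as you say, the proof is complete.
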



\noindent
For example, if $n_1=n_2=1$, $a=b=0$ and $z=z_1=z_2-x$, this gives us
\begin{equation}\label{eq:thetaprodex}
\thu(z)\cdot \thu(z+x) = \theta_{2\tau}(x)\theta_{2\tau}(2z+x) + \theta[1/2,0](2\tau,x)\theta[1/2,0](2\tau,2z+x)
\end{equation}

\bigskip\bigskip
\subsection{The Category of Holomorphic Line Bundles on Elliptic curves}\label{sec:EllLB}\hfill\\

Letting $\pic^d(C)$ denote the space of degree $d$ line bundles on a curve $C$ of genus $g$ and $\mL_d\in \pic^d(C)$, we have an isomorphism $J(C):=\pic^0(C)\arriso \pic^d(C)$ given by $\mL\mapsto \mL\otimes_{\cO_C} \mL_d$.  By the Abel-Jacobi theorem $J(C)$, usually called the \emph{Jacobian variety} of $C$, is isomorphic to a complex torus of dimension $g$.  In the case of an elliptic curve $\Et$, we then have that $\Et\cong J(\Et)$.  To see this we can use the line bundle-divisor correspondence and the above isomorphism $J(\Et)\cong \pic^1(\Et)$.  The degree one line bundles $\mL_x$ and $\mL_y$ associated with divisors $x\in \Et$ and $y\in \Et$ are equivalent if and only if $x=y$.\footnote{The statement $\mL_x \cong \mL_y\rlimp x=y$ is true on any curve $C\ncong \bP^1$.  We can think of any meromorphic function $f$ on $C$ as holomorphic map $f:C\to \bP^1$.  If $ord(f)=x-y$, then this holomorphic map is degree one and thus an isomorphism $C\arriso \bP^1$.}  The homomorphism $\mL_x:\Et\to \pic^1(\Et)$ is then 1-1.  It is also surjective (and thus prove this case of the Abel-Jacobi theorem) we need only to observe that, combining the Riemann-Roch theorem and Serre duality, $\deg(\mL)= h^0(\mL)$ and so, if $\deg(\mL)=1$, then $\mL$ has a global holomorphic section vanishing only at one point.\\
Note as we continue to discuss holomorphic line bundles, we will generally refer to them as simply line bundles and their global holomorphic sections as simply, sections.\\

By the construction of this isomorphism, fixing any degree one line bundle, $\mL$, we can parameterize all degree $n$ line bundles by $(t_x^*\mL)\otimes\mL^{n-1}$, where $t_x$ is the automorphism of $\Et$ given by translation by any particular $x\in \Et$ and $\mL^k: = \mL^{\otimes k}$ is the $k\ith$ tensor product of the line bundle with itself.\\
For our particular task of building a mirror functor we will pick a special $\mL$.  In particular, the one which holds $\theta_\tau$ as a section.\\

For convenience now we will use our reparameterization $E_q := \bC^*/u\sim qu$.\\ 
Any holomorphic function $\vphi:\bC^*\to \bC^*$ gives rise to a line bundle $$\mL_q(\vphi): = \bC^*\times \bC/(u,v)\sim (uq,\vphi(u)v).$$
In fact, all line bundles on $\Eq$ can be described in this way and $\mL(\vphi)\cong \mL(\vphi')$ if and only if there exists some holomorphic $B:\bC^*\to \Gl(\bC)$  such that $\vphi'(u) = B(qu)\vphi(u)B\inv(u)$.  To see this, note that, as all line bundles on $\bC^*$ are trivial, there exists a global frame on the pullback $\pi^*\mL(\vphi)$.  Given such a frame, $\vphi$ can be thought of as the induced fiber map $(\pi^*\mL(\vphi))_u\to (\pi^*\mL(\vphi))_{qu}$.  If $\mL(\vphi)\cong\mL(\vphi')$ then there must exist some $\hat B:\mL(\vphi)\arriso \mL(\vphi')$ such that the following diagram commutes.\\
$$\xymatrix{
(\pi^*\mL(\vphi))_u\ar[d]_{\pi^*\hat{B}(u)} \ar[r]^{\vphi(u)} &(\pi^*\mL(\vphi))_{qu}\ar[d]^{\pi^*\hat{B}(qu)}\\
(\pi^*\mL(\vphi'))_u \ar[r]^{\vphi'(u)} &(\pi^*\mL(\vphi'))_{qu}
}$$

The fact that any such $B$ descends to and isomorphism of line bundles on $\Eq$ can be shown by simply verifying that the line bundle $\mL(\vphi)$ is well-defined for any $\vphi$.\\
Below we will often make use of the fact that $\mL(\vphi_1)\otimes \mL(\vphi_2) = \mL(\vphi_1\vphi_2)$.  Recalling that the tensor product of two line bundles gives a line bundle whose transition functions are given by the product of the respective transition functions of those bundles, this fact is clear.\\


Consider in particular $\Lp{}:=\mL(\varphi_0)$, where $\varphi_0(u) = q^{-1/2}u\inv$. Notice that, by \ref{thm:thetaprops} above, $\theta_q(z)$ descends to a holomorphic section of $\mL$.  This is our special choice of degree one line bundle to parameterize all others.\\
Recall that, for positive degree line bundles on the elliptic curve, $h^0(\mL) = \deg(\mL)> 0$.\\
For $n>0$, we similarly have that a basis of $H^0(\mL^n)$ is given by $\theta[j/n,0](n\tau,nz)$ for $j\in \bZ/n\bZ$.\\
From the above parameterization of the Picard group, it is also clear that we have $t^*_x\Lp{} = \mL(t^*_x\vphi_0)$.  Together, these facts allow us to completely describe the additive structure of the category of line bundles on an elliptic curve.\\


In particular, let $\mL_1 = (t_x^*\Lp{})\otimes\Lp{n-1}$ and $\mL_2 = (t_y^*\Lp{})\otimes\Lp{m-1}$ be any two line bundles over $\Et$.\\
Then

$$\Hom(\mL_1,\mL_2) = H^0(\mL_1^*\otimes \mL_2) = H^0((t_x^*\Lp{*}\otimes t_y^*\Lp{}\otimes\Lp{m-n})$$

\noindent Using $t_\delta^*\vphi_0^k = e^{-2 \pi i \delta k}\vphi_0^k$, we can rewrite this bundle as
$$t_x^*\Lp{*}\otimes t_y^*\Lp{}\otimes\Lp{m-n} = \mL(t_x^*\vphi_0\inv\cdot t_y^*\vphi_0\cdot\vphi_0^{m-n}) = \mL(e^{2\pi i (x-y)} \vphi_0^{m-n}) = t^*_\delta \Lp{k}$$

\noindent for $k := m-n$ and $\delta := \dfr{y-x}{k}$ and where the last equality holds only if $k\neq 0$.\\

If $\mL_1$ and $\mL_2$ are of the same degree (i.e. $k=0$), then any morphism between them corresponds to a global section the degree zero line bundle $\mL_1^*\otimes \mL_2$.  Non-zero sections of degree zero line bundles are non-vanishing and thus correspond to an isomorphism.  This gives us a contradiction unless $x=y$.  Recalling that only positive degree bundles admit sections, the $\Hom(\mL_1,\mL_2)$ is non-zero if and only if $m>n$ or both $n=m$ and $x=y$.\\

%

To summarize, the only nontrivial case we have is when $k := m-n>0$ and $\delta := \dfr{y-x}{k}$.  In which case we have, 
$$\Hom(\mL_1,\mL_2) = H^0(t_\delta^*\Lp{k}) = \text{span}_\bC\{\theta[j/k,k\delta](k\tau,kz)\}_{j\in \bZ/k\bZ}$$\\

\begin{rem}
One can check\footnote{This means simply checking $t^*_x$ takes homsets to homsets in the obvious way and commutes with the composition (product) of theta functions.} in fact that $t^*_x$ is functorial and thus defines a family (parameterized by the torus) of functorial auto-equivalences of the category of line bundles on $E_\tau$.
\end{rem}

Let us introduce the less cumbersome notation $f_j^{(k)}(z) := \theta[j/k,0](k\tau,kz)$.\\

\section{The A-side}\label{sec:Elliptic}
\bigskip\bigskip
\subsection{B-fields and the \kah{} Moduli Space}\hfill\\

Let $(X,J)$ be a complex manifold and define
$$\cK(X,J):=\{[\omega]\in H^2(X,\bR)\stb \omega \text{ is \kah{}}\}$$
This is usually called the \emph{\kah{} cone} of $X$.  Mirror symmetry suggests that we should consider the \kah{} moduli space, of $(X,J)$, as (the complex manifold):
$$\cM_{\text{\kah{}}}(X,J):=(H^2(X,\bR) \oplus i\cK(X,J))/H^2(X,\bZ)$$
A closed 2-form $\omega_\bC = B+i\omega$ representing an element in this space is called a \emph{complexified \kah{} form} and this tacked on closed 2-form $B$ is called a \emph{B-field}.\footnote{The name for this physically motivated structure comes from an analogy to denoting magnetic fields with a $B$ and not from reference to Type IIB string theory (the B-side of mirror symmetry).  The mathematical meaning of B-fields has been studied by Hitchin and others in the newly developing field of ``generalized geometry."}\\

As $H^2(\Et,\bR) = \bR$, our complexified \kah{} form is determined by a complex number $\rho = B+iA$, with $A>0$.  We will from now on denote the (real) 2-torus equipped with this complexified \kah{} form by $\Er$.

\begin{rem}Homological mirror symmetry is a relationship only conjectured to exist between a \kah{} manifold and its mirror dual.  That said, the Fukaya category will only depend on the symplectic structure and the derived category of coherent sheaves will only depend on the complex structure (of the mirror dual).\end{rem}

\bigskip\bigskip
\subsection{The Fukaya Category}\hfill\\

It is not clear exactly what the objects of the Fukaya category should be.  Fukaya defined the category \cite{FuII,FOOO2009} over (a countable set of) closed Lagrangian submanifolds equipped with line bundles of curvature $B|_L$.  Kontsevich, inspired by string theoretic D-branes, suggested taking these objects as closed special Lagrangian submanifolds equipped with flat complex line bundles with unitary connections.  Also, it is, in general, necessary to add some additional structure for purposes of grading and further require that our Lagrangian submanifolds are relatively spin. This latter condition is automatically satisfied in the one-dimensional case - the (relative) spin structure is a $\bZ_2$ choice here ($H^1(L;\bZ_2)$ in general) and can be suppressed.\\

In the case of elliptic curves, each Hamiltonian isotopy class of closed Lagrangian submanifolds has a unique special Lagrangian representative.  This can be shown using the Ricci flow and in fact, using our mirror functor to suggest a definition of ``stable" special Lagrangian objects, this given a symplectic analogue of Atiyah's classification of vector bundles on an elliptic curve \cite[section 38.4]{clayMS}.  For a general Calabi-Yau manifold, it was shown in \cite{TY2002} that this uniqueness (but not necessarily existence) is always true when the Fukaya category is unobstructed.  As shown in section \ref{sec:SL}, the closed special Lagrangian submanifolds of $\Ets$ are exactly the geodesics (i.e. the projections, through $\bC\to\Ets$, of rationally sloped lines).  Following the middle-ground taken by \cite{PZ,Kreuss}, we will equip our Lagrangian submanifolds with a local systems whose monodromy has eigenvalues of unit modulus (i.e. complex vector bundles equipped with connections whose holonomy group is generated by fiber automorphisms with eigenvalues of unit modulus).  The Jordan blocks will be related to non-stable vector bundles on the B-side.\\

\bigskip\bigskip
\subsection{The Objects}\hfill\\

In specific, here we define the objects of $\Fuk(\Er)$ to be the graded, oriented, closed geodesics (of $\Er$) equipped with a flat complex vector bundle of monodromy with unit eigenvalues (in the sense discussed in remark \ref{rmk:monodromy}).
All Lagrangian submanifolds of a 2-surface are of course homeomorphic to $S^1$ and, as shown in section \ref{sec:VB}, all complex vector bundles on $S^1$ are trivial.  Moreover, we know from corollary \ref{thm:S1con}, that any flat connection on a complex vector bundle over $S^1$ is determined (up to gauge equivalence) by the conjugacy class (in $\Gl_k(\bC)$) of its monodromy operator $M$.\\

We will give a grading to our special Lagrangian submanifolds, $L$, determined by a choice $\alpha\in \bR$ such that $z(t) = z_0 + te^{\pi i \alpha}$ pararameterizes a lift of $L$ to $\bC$ (for some appropriate $z_0$).\\
$\alpha$ also determines an orientation on $L$ given by the direction of $e^{\pi i \beta}$ for the unique $\beta \in (-1/2,1/2]$ such that $\alpha - \beta \in \bZ$.\footnote{This grading and orientation match those constructed in appendix \ref{sec:phaseL}.}\\

So, following the notation in \cite{Kreuss}, we will use tuples $\sL=(L,\alpha,M)$ to denote the objects of $\Fuk(\Er)$.  And we will often use $M_p$ to refer to the stock of our local system (i.e. fiber of our vector bundle) over a point $p\in L$.
We have a shift functor, which mirrors that in $\dbcoh(E_\rho)$, given by
$$(L,\alpha,M)[1]: = (L,\alpha+1,M))$$

\bigskip\bigskip
\subsection{The Morphisms}\hfill\\

Suppose that $\sL$ and $\sL'$ are two objects such that $L\neq L'$.
$$\Hom(\sL,\sL'): = \bigoplus_{p\in L\cap L'}\Hom(M_p,M'_p)$$
Similarly if $L=L'$, we can define
$$\Hom(\sL,\sL'): = \Hom(M,M'),$$
where $\Hom(M,M')$ is the space of vector bundle morphisms between the associated complex vector bundles (modulo isomorphisms of $L$ and $L'$).\\
We could of course write these two cases together as 
$$\Hom(\sL,\sL'): = \Hom(\rest{M}_{L\cap L'},\rest{M}_{L\cap L'})$$

A $\bZ$-grading is put on morphisms $\sL\to \sL'$ by the Maslov-Viterbo index, which in this case is given by $$\label{eq:MVindex}\mu(\sL,\sL') = \text{ceil}(\alpha - \alpha')= -\text{floor}(\alpha' - \alpha)\in \bZ$$

\bigskip\bigskip
\subsection{The \Ainf{}-structure}\hfill\\

The Fukaya category is not a true category in the sense that the composition of morphisms is not associative.  It does, however, admit an \Ainf{}-structure.\\

Let $R$ be some commutative ring (for mirror symmetry, we typically take $\bC$ or $\bQ$).\\ 
An \emph{$A_\infty$ category} is given by a set of objects $\ob$, a graded free $R$-module $Hom(c_1,c_2)$ for each $c_1,c_2\in \ob$, and a family of degree $2-k$ operations
$$\fm_k:Hom(c_0,c_1)\otimes...\otimes Hom(c_{k-1},c_k)\to Hom(c_0,c_k)$$
which satisfy the ``$A_\infty$ associativity relations" for $k\geq 1$:\\

$$\sum_{r=1}^{n}\sum_{s=1}^{n-r+1}(-1)^\epsilon \fm_{n-r+1}(a_1\otimes...\otimes a_{s-1}\otimes \fm_r(a_s\otimes...\otimes a_{s+r-1})\otimes a_{s+r}\otimes...\otimes a_n) = 0$$
for all $n\geq 1$, where $\epsilon = (r+1)s + r(n+\sum_{j=1}^{s-1}\text{deg}(a_j))$.

From $n=1$, we have $\fm_1^2 = 0$ and giving us a differential.\\
From $n=2$, we have $\fm_1(\fm_2) = \fm_2(\bid\otimes \fm_1) + \fm_2(\fm_1\otimes \bid)$, which tells us that $\fm_2$ defines a multiplication operator satisfying the Leibniz rule of $\fm_1$.\\
From $n=3$, we have that $\fm_2(\fm_2\otimes\bid)-\fm_2(\bid\otimes \fm_2) = \fm_1(\fm_3) + \fm_3(\fm_1 \otimes \bid\otimes\bid) + \fm_3(\bid\otimes \fm_1\otimes\bid) + \fm_3(\bid\otimes\bid\otimes \fm_1)$, which tells us that $\fm_2$ is associative ``up to a homotopy" $\fm_3$ (in the sense described for chain maps in appendix \ref{sec:chaincomp}).  In particular this third condition says $\fm_2$ is associative at the level of cohomologies.\\

\begin{rem}
In our case, the Fukaya category on an elliptic curve, we will have that $\fm_1=0$ and thus $\fm_2$ will be associative.  Generally speaking the Fukaya category is not associative.  Typically when constructing the Fukaya category we also need to use a \emph{curved} (also called \emph{obstructed}) $A_\infty$ structure.  This means we add a map $\fm_0$ (and start the sum with $r=0$) obstructing our differential category structure.  For example, the first two relations would then be $\fm_1(\fm_0) = 0$ and $\fm_1^2 + \fm_2(\fm_1\otimes \bid) + \fm_2(\bid\otimes \fm_0) = 0$.  This obstruction, $\fm_0$, is mirror to the Landau-Ginzberg superpotential on the B-side \cite{ArrAntican}.
\end{rem}

The \Ainf{}-structure of the Fukaya category is given by summing over\\
pseudo-holomorphic disks bounded by Lagrangian submanifolds.  To be specific, we will need to define a particular collection of $k+1$-pointed (pseudo-)holomorphic disks.  While we do this we will use an index $j\in \bZ/\bZ_{k+1}$.\\

\begin{rem}
The Fukaya category generally involves pseudo-holomorphic disks, but as every almost complex structure on a surface is integrable, we can drop the pseudo in our one dimensional case.\end{rem}

Let ${\sL_j}$ be a collection of $k$ objects and fix a point $p_j\in L_j\cap L_{j+1}$.  As we are using the index $j\in \bZ/\bZ_{k+1}$, $p_{k}\in \sL_{k}\cap\sL_0$.  Let $D:=\{z\in \bC \stb z\leq 1\}$ and let $(D,S_{k+1})$ denote a disk with $k+1$ marked points $S_k = \left\{e^{i\theta_j}\right\}\subset \partial D$ such that $0=\theta_0<\theta_1<...<\theta_{k}< 2\pi$.\\

$\cat{CM}_{k+1}(X;L_0,...,L_{k};p_0,...,p_{k}):= $
$$\left\{
(\phi,S_k) \left| \begin{split}&\phi:D\to X \text{ is pseudo-holomorphic, }\\
&\phi(e^{i\theta_j}) = p_j,\: \phi(e^{it}) \in L_j \:\forall\, t\in (\theta_{j-1}, \theta_{j}) \an \forall j\end{split}\right.\right\}/\sim
$$
Where $(\phi,S_{k+1})\sim(\phi',S_{k+1}')$ if there exists a biholomorphic map $f:D\to D$ such that $\phi'\circ f = \phi$ and $f(e^{i\theta_j}) = e^{i\theta_j'}\:\forall\,j$.\\\\

\begin{rem}The ordering of $\theta$'s ensures that as we traverse the boundary of the disk, we hit the marked points in order.  Parallel transport along this boundary will define our \Ainf ``composition maps".\end{rem}  
%

Let $u_\ell\in \Hom(\sL_{\ell-1},\sL_{\ell})$ for $\ell = 1,..,k$ be a collection of $k$ morphisms and assume the involved $k+1$ Lagrangian submanifolds are distinct.  We will often refer to this as \emph{the transversal case}.\\

For notational convenience, we will let morphisms be denoted by sums of the form $\sum_\mu t_\mu\cdot p_\mu$ where each $p_\mu$ is an intersection point and each $t_\mu$ is a morphism between fibers over $p_\mu$.\\
$$\fm_k(u_1\otimes...\otimes u_k) = \sum_{p\in L_{k}\cap L_0}C(u_1,..., u_k;p)\cdot p$$ where the fiber morphisms are defined by
$$C(u_1,..., u_k;p) = \sum_{[\phi]} e^{2\pi i \int\phi^*\omega_\bC}h_{\pd \phi}(u_1\otimes...\otimes u_k)$$

This second sum is over equivalence classes $[\phi]$ in the set\\
$\in \cat{CM}_{k+1}(\Er;L_0,...,L_k;p_0,...,p_{k-1},q)$ and
$$h_{\pd \phi}(u_1\otimes...\otimes u_k): = P_{\gamma_k}\circ u_k\circ...\circ u_1 \circ P_{\gamma_0},$$
where $P_{\gamma_j}:(M_j)_{p_{j-1}}\to (M_j)_{p_{j}}$ is the parallel transport operator (induced by the connection of $\sL_j$) over the curve $\gamma_j(t) = \phi(e^{it})$, $t\in [\theta_{j-1},\theta_j]$.\\

We will not check that these satisfy the \Ainf relations here.  We will give some discussion of the non-transversal case, but only after descending to the zeroth cohomology.  For a more complete discussion of this category in the case of elliptic curves (and more generally for abelian varieties) see \cite{FuAbel}.

\begin{rem}
The construction of the Fukaya category on a general manifold has many technical obstructions.  The relevant obstruction theory and complete definition can be found in \cite{FOOO2009}.  We mentioned previously the addition of a (relative) spin structure to our Lagrangians, which is to solve a problem of orientability of the moduli space of disks.  Another difficulty in higher dimensions is the existence of pseudo-holomorphic bubbling.  This is typically solved by replacing the complex coefficients of our homsets with a universal Novikov ring (where the above $\fm_k$ sums converge).  There is an alternative approach given by cluster algebras in \cite{CL}.
\end{rem}

The following basic property of topological covering spaces will help us count holomorphic discs.\\ 
\begin{lem}[The Lifting Criterion]Let $\tilde X\xrightarrow{p}X$ be any covering space and $\varphi:Z\to X$ a map to its base from a path-connected and locally path-connected space $Z$.  Fix a base point $z\in Z$ and a corresponding fiber point $\tilde x\in p\inv(\varphi(z))$.\\
There exists a lift $\tilde \varphi:Z\to \tilde X$ of $\varphi$ such that $p\circ\tilde \varphi(z)= \varphi(z)$ if and only if the induced homomorphisms $\varphi_\sharp :\pi_1(Z,z)\to \pi_1(X,\varphi(z))$ and $p_\sharp :\pi_1(\tilde X,\tilde x)\to \pi_1(X,\varphi(z))$ are such that the image of the former is contained in the image of the later.  I.e.
$$\varphi_\sharp(\pi_1(Z,z))\subset p_\sharp(\pi_1(\tilde X,\tilde x))$$
Moreover, if such a lift $\tilde \varphi$ exists, it is unique.\end{lem}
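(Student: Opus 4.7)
The necessity direction is immediate: if a lift $\tilde\varphi$ exists with $\tilde\varphi(z)=\tilde x$, then $\varphi = p\circ\tilde\varphi$ yields $\varphi_\sharp = p_\sharp\circ\tilde\varphi_\sharp$ at the level of fundamental groups, so $\varphi_\sharp(\pi_1(Z,z))\subset p_\sharp(\pi_1(\tilde X,\tilde x))$ automatically.

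For sufficiency, the plan is the standard pointwise construction. Given $w\in Z$, choose a path $\gamma:[0,1]\to Z$ with $\gamma(0)=z$, $\gamma(1)=w$ (possible since $Z$ is path-connected). The composition $\varphi\circ\gamma$ is a path in $X$ starting at $\varphi(z)$, which by the unique path-lifting property of covering spaces lifts to a unique path $\widetilde{\varphi\circ\gamma}:[0,1]\to \tilde X$ with $\widetilde{\varphi\circ\gamma}(0)=\tilde x$. Define $\tilde\varphi(w):=\widetilde{\varphi\circ\gamma}(1)$.

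The first main task is to show this is well-defined, and here is where the hypothesis on fundamental groups enters. If $\gamma'$ is another path from $z$ to $w$, then $\gamma\cdot\overline{\gamma'}$ is a loop at $z$, so $\varphi\circ(\gamma\cdot\overline{\gamma'})$ represents a class in $\varphi_\sharp(\pi_1(Z,z))\subset p_\sharp(\pi_1(\tilde X,\tilde x))$. Hence this loop lifts to a loop at $\tilde x$ in $\tilde X$, which by uniqueness of path lifting forces the lifts of $\varphi\circ\gamma$ and $\varphi\circ\gamma'$ to end at the same point. I expect this well-definedness step to be the crux of the argument, as it is the only place the fundamental group hypothesis is invoked.

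The remaining tasks are continuity and uniqueness. For continuity at a point $w\in Z$, pick an evenly covered neighborhood $W\subset X$ of $\varphi(w)$, let $\tilde W$ be the sheet over $W$ containing $\tilde\varphi(w)$, and let $p_{\tilde W}:\tilde W\to W$ be the resulting homeomorphism. Using the continuity of $\varphi$ together with the local path-connectedness of $Z$, choose a path-connected neighborhood $V\subset Z$ of $w$ with $\varphi(V)\subset W$. For any $w'\in V$, concatenate a fixed path from $z$ to $w$ with a path from $w$ to $w'$ lying in $V$; lifting the second segment reduces to applying $p_{\tilde W}^{-1}$ to $\varphi$ restricted to that segment, so $\tilde\varphi(w') = p_{\tilde W}^{-1}(\varphi(w'))$ on $V$, which is continuous. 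Finally, uniqueness is immediate from uniqueness of path lifting: any two lifts $\tilde\varphi_1,\tilde\varphi_2$ agreeing at $z$ both lift $\varphi\circ\gamma$ from $\tilde x$, so they must agree at $\gamma(1)=w$ for every $w$.
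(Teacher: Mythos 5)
Your proposal is correct, and it coincides with the proof the paper relies on: the paper does not write out an argument but simply defers to the homotopy lifting property and cites Hatcher, and your write-up is exactly that standard argument (pointwise definition of $\tilde\varphi$ by lifting $\varphi\circ\gamma$, well-definedness from the hypothesis $\varphi_\sharp(\pi_1(Z,z))\subset p_\sharp(\pi_1(\tilde X,\tilde x))$ together with the monodromy/homotopy lifting property, continuity via evenly covered neighborhoods and local path-connectedness, uniqueness from unique path lifting). No gaps worth flagging.
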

\begin{proof}This result can be easily gotten from the homotopy lifting property.  A detailed proof can be found in \cite{HatcherAT}.\end{proof}

\begin{rem}\label{rmk:kgon}Notice that if $Z$ is simply connected (e.g. a disc), this condition is trivially satisfied.  Thus we have that, any mapping of a $k$-pointed disk into $\Er$ (like those used to define $\fm_k$) lifts to a $k$-gon in $\bC$ with sides lying on lines lifted from $k$ geodesics.  As $\pi:\bC\to \Ert$ is a holomorphic map, we know, by the Riemann mapping theorem, that each $k$-pointed disc in $\bC$ represents such a map.  This is well-defined as the lifting operation is equivariant with respect to automorphisms of the disk (i.e. $\widetilde{\phi\circ f} = \tilde\phi\circ f$).  This gives us an easy way to count these holomorphic disks and also tells us that $\fm_1=0$.  As $\fm_1$ acts as the differential operator in this category, this means our cohomology complex is identical to our complex of homsets graded by the Maslov-Viterbo index \ref{eq:MVindex}.
\end{rem}

\begin{rem}\label{rmk:S1con}
From corollary \ref{thm:S1con}, we can assume (as our Lagrangians are just circles) that the above parallel transport operators $P_\gamma$ are given by $P_\gamma = \exp(M\ell)$, where $M$ is the monodromy operator and $\ell$ is the length of $\gamma$ divided by the length of the Lagrangian (e.g. if $\gamma$ covers the Lagrangian, this is the winding number).
\end{rem}

\section{The Simplest Example}\label{sec:simple}

As we will see in the following sections, the heart of homological mirror symmetry for elliptic curves is a functor between holomorphic vector bundles (and skyscraper sheaves) over $\Et$ and complex local systems over non-vertical (and, respectively, vertical) geodesics of $\Ets$.  Let us take a look at how holomorphic line bundles will be sent to lines of integer slope.  More specifically, our functor takes gives the following correspondence between objects.

\begin{center}
  \begin{tabular}{ c | c }
  A-side Objects                         & B-side Objects\\
                                      \hline
rank 1 local systems of                  & holomorphic line bundles\\
slope $d$,                               & \\
$y$-intercept\ftmark{} $y_0$, and                 & $(t^*_{-y_0\tau + \beta}\Lp{})\otimes\Lp{d-1}$\\
connection $\nabla = d -2\pi i \beta dx$ & 
  \end{tabular}
\end{center} 

Except in the horizontal case, we could write the corresponding holomorphic line bundle in terms of the $x$-intercept, $x_0$, as $(t^*_{dx_0\tau + \beta}\Lp{})\otimes\Lp{d-1} = t^*_{x_0\tau + \frac{\beta}{d}}\Lp{d}$.

So what about morphisms?  Two geodesics of respective slopes $n_1$ and $n_2$ with respective $x$-intercepts $x_1$ and $x_2$ have intersections at points 
$$e_k = \left(\frac{n_1x_1 - n_2x_2 + k}{n_1-n_2},\frac{n_1}{n_1-n_2}[n_2(x_1-x_2)+k]\right) \:\text{ for } k\in \bZ/(n_1-n_2)\bZ$$
We then want to send these points to linear combinations of morphisms\\
$t^*_{\delta_{21}\tau + \beta_{21}} f^{(n_2-n_1)}_k$, for $\delta_{21} := \frac{n_2x_2-n_1x_1}{n_2-n_1}$ and $\beta_{21}:= \frac{\beta_2 - \beta_1}{n_2-n_1}$.\\

Our functor will be given by 
$$t^*_{\delta_{21}\tau + \beta_{21}} f^{(n_2-n_1)}_k \mapsto e^{\pi i \tau (n_1-n_2)\delta_{21}^2}\cdot e_k $$

\begin{rem}
Note that we can not simply think of this as sending theta functions to points or vise versa.  We will give below an example where multiple theta functions are sent to the same point (but each time that point is thought of as living in a different homset).  That said, $t^*_{\delta\tau+\beta}f^{(n)}_k$ is always going to be sent to a point with $x$-coordinate $\delta + k/n$ and coefficient $e^{-\pi i n\tau\delta^2}$.  If we imagine translating one Lagrangian in the $x$-direction while keeping the other fixed, we see this translation must correspond to a change in $\delta$ of the same amount.\\

We only have one direction to translate in on the A-side, but we can also twist our line bundle, which we see corresponds to changing $\beta$ by the proportional amount.  The monodromy (i.e. twisting) can be easily visualized by thinking of $L \times \bC\cong S^1\times D$ as a solid donut containing a curve (not intersecting the zero curve) representing the parallel transport of the unit once around the circle.  We can picture the rank $k$ situation by imagining $k$ (ordered) curves on each of $k$ solid donuts, but it is difficult to visualize the nondegeneracy for $k>>1$.

\end{rem}  


To motivate this correspondence, let us go through the simplest nontrivial example.\footnote{This example was first given in \cite{PZ}.}  Let $\tau = iA \in i\bR$ and consider three Lagrangian submanifolds $L_0,L_1,L_2$, whose lifts to $\bR^2$ can be chosen to all travel through the origin, and have slopes $0,1, \an 2$ respectively.  For now lets assume the trivial connection $\nabla = d$.  The grading (i.e. choice of $\alpha$) will not yet be important, and, for now, we are assuming the trivial connection, so we do not need to worry about anything but the submanifold itself.  The utility of the grading is mostly in finding a categorical equivalence (we need a shift functor on the A-side and the grading will play that role).  In the simple example we are discussing here, we will only need the grading for the orientation it endows on our Lagrangian submanifolds.  For most of this simple example, we will assume trivial connections, which will allow us to suppress the choice of grading all together.\\

\begin{figure}[H]
\label{fig:SimpleTriangle}
\includegraphics[width=1.8in,hiresbb=true]{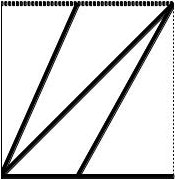}
\caption{The bold lines mark our three Lagrangian submanifolds.  Dotted lines signify repetitions.}
\end{figure}
On the B-side, these objects will correspond to line bundles $\mL_0 = \cO$, $\mL_1 = \Lp{}$, and $\mL_2 = \Lp{2}$.  Note $\Lp{} := \mL(\varphi_0)$ is our special choice of the degree 1 line bundle defined above.\\
We then want our functor to send
$\bC^{|\sL_i\cap \sL_j|} = \Hom(\sL_i,\sL_j)\to\Hom(\mL_i,\mL_j) = H^0(\mL_j\otimes \mL_i^*)$ in a way that is compatible with composition.\\

Observe that\\
$\Hom(\mL_0,\mL_1) \cong H^0(\Lp{}) = \text{span}_\bC(\{\thu(z)\})$\\
$\Hom(\mL_1,\mL_2) \cong H^0(\Lp{}) = \text{span}_\bC(\{\thu(z)\})$\\
$\Hom(\mL_0,\mL_2) \cong H^0(\Lp{2}) = \text{span}_\bC(\{\theta[0,0](2\tau,2z), \theta[1/2,0](2\tau,2z)\})$\\

Composition of morphisms is given by fiberwise multiplication.  So the above addition formula, \ref{thm:thetaadd}, determines all compositions on the A-side of this simple example.\\

Let us take a look at the A-side and find our $\fm_2$ composition operator.  From figure \ref{fig:SimpleTriangle}, we can see\\
$L_0\cap L_1 = \{e_1\}$\\
$L_1\cap L_2 = \{e_1\}$\\
$L_0\cap L_2 = \{e_1,e_2\}$\\

Recall our lifting lemma (and remark \ref{rmk:kgon}) above.  Fixing the lift of $e_1$ to be the origin in $\bC$, our triangles are determined by the winding number (including orientation\ftmark{}) of the boundary segment mapped to $L_1$.\\
\fttext{With respect to the orientation given to $L_1$ by its grading (i.e. choice of $\alpha$).}
 
Notice that when immersion $\gamma_1\imm L_1$ has an even winding number, all corners of the triangle descend to the same point in $\Et$.  In the odd case, $e_2 = \pi(1/2)$.  The areas\ftmark{} of these triangles respectively are $\phi^*\omega_\bC = \tau n^2 = iAn^2$ and $\phi^*\omega_\bC = \tau n^2 = iA(n+1/2)^2$, where $n$ is the winding number of $\gamma_1\imm L_1$.\\
\fttext{In this case with trivial B-field, $\phi^*\omega_\bC$ gives the area of the lifted disk, but in general this quantity will be complex and is often referred to as the \emph{energy} of $\phi$.}

$$\fm_2(e_1,e_1) = C(e_1,e_1,e_1)\cdot e_1 + C(e_1,e_1,e_2)\cdot e_2$$
Where
\begin{align*}
&C(e_1,e_1,e_1) = \sum_{n\in\bZ}e^{-2\pi An^2}\\
&C(e_1,e_1,e_2) = \sum_{n\in\bZ}e^{-2\pi A(n+1/2)^2}
\end{align*}
Notice that $C(e_1,e_1,e_1) = \theta[0,0](2iA,0) = \theta_{2\tau}(0)$ and $C(e_1,e_1,e_2) = \theta[1/2,0](\tau,0)$.\\

Thus $\fm_2(e_1\otimes e_2) = \theta_{2\tau}(0)\cdot\theta_{2\tau}(2z) +\theta[1/2,0](2\tau,0)\cdot\theta[1/2,0](2\tau,2z)\in H^0(\Lp{2}) = \Hom(\mL_0,\mL_2)$.\\
In fact, by the addition formula above, $\fm_2(e_1\otimes e_1) = [\thu(z)]^2$.\\

This suggests our functor might send
\begin{align*}
\Hom(\sL_0,\sL_1)&\to \Hom(\mL_0,\mL_1)\\
e_1&\mapsto \thu(z)\\
&\\
\Hom(\sL_1,\sL_2)&\to \Hom(\mL_1,\mL_2)\\
e_1&\mapsto \thu(z)\\
&\\
\Hom(\sL_0,\sL_2)&\to\Hom(\mL_0,\mL_2)\\
\left\{\begin{array}{r}e_1\\e_2\end{array}\right.
&\begin{array}{l}\mapsto \theta[0,0](2\tau,2z)\\\mapsto \theta[1/2,0](2\tau,2z)\end{array}
\end{align*}

Let us now complicate our simple example a bit and see what happens on the A-side when we replace $L_2$ with another geodesic of slope 2.  Let $L_2$ be the geodesic in $E^\tau$ descending from the line of slope 2 with $x$-intercept given by $x_0\in (0,1/2)$.\footnote{We can assume $x_0\in (0,1/2)$ without any loss of generality.}  Let $\sL'_2$ denote the corresponding local system with trivial connection.\\
\begin{figure}[H]
\label{fig:FourTrianglesShifted}
\includegraphics[width=1.8in,hiresbb=true]{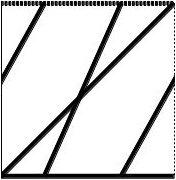}
\caption{The shifted case.}
\end{figure}
Now we have
$$\fm_2(e_1\otimes e'_2) = C(e_1,e'_2,p_1)\cdot p_1 + C(e_1,e'_2,p_2)\cdot p_2$$
Where
\begin{align*}&C(e_1,e'_2,p_1) = \sum_{n\in \bZ}e^{-2\pi A(n+x_0)^2} = \theta[x_0,0](2\tau,0)\\
&C(e_1,e'_2,p_2) = \sum_{n\in \bZ}e^{-2\pi A(n + 1/2 - x_0)^2} = \sum_{n\in \bZ}e^{-2\pi A(n+x_0 + 1/2)^2} = \theta[1/2 + x_0,0](2\tau,0)\end{align*}
\noindent
Note that $\theta[1/2 - x_0,0](2\tau,0) = \theta[1/2 + x_0,0](2\tau,0)$.\\

\noindent
According to the above functor, the mirror of $\sL'_2$ should be $t^*_{x_0\tau}\Lp{2}$.  So in this situation we have
\begin{center}\begin{align*}
&\Hom(\mL_0,\mL_1) = \spanc{\thu(z)}\\
&\Hom(\mL_1,\mL'_2) = H^0(t^*_{2x_0\tau}\Lp{}) = \spanc{t^*_{2x_0\tau}\thu(z)}
\end{align*}\end{center}

\noindent
Thus our compositions $\mL_0\to \mL_1\to\mL'_2$ are determined by the product\\
$\thu(z)\cdot \thu(z+2x_0\tau)$ and should be members of the space\\
$$\Hom(\mL_0,\mL'_2) = H^0(t^*_{x_0\tau}\Lp{2}) = \spanc{\theta[j/2,2x_0\tau](2\tau,2z)}_{j\in \bZ/j\bZ}$$

\noindent
From the above addition formula for theta functions, we have
\begin{align*}
\thu(z)&\cdot \thu(z+2x_0\tau)\\
&= \theta_{2\tau}(2x_0\tau)\theta_{2\tau}(2z+2x_0\tau) + \theta[1/2,0](2\tau,2x_0\tau)\theta[1/2,0](2\tau , 2z+2x_0\tau)\\
&= e^{-2\pi i x_0^2\tau}\left[\theta[x_0,0](2\tau,0)\theta[0, 2x_0\tau](2\tau,2z)\right. \\
&\left.  \hspace{1.9 in} +\theta[x_0 + 1/2,0](2\tau,0)\theta[1/2,2x_0\tau](2\tau,2z)\right]
\end{align*}

\noindent
This tells us that, our functor preserves composition by sending
\begin{align*}
\Hom(\sL_0,\sL_2)&\to\Hom(\mL_0,\mL_2)\\
\left\{\begin{array}{r}p_1\\p_2\end{array}\right.
&\begin{array}{l}\mapsto \theta[0,0](2\tau,2z)\\\mapsto \theta[1/2,0](2\tau,2z)\end{array}
\end{align*}

Before moving on from this simple example, let us see what happens when we give $\sL_2$ a nontrivial connection.  Let $\nabla_2 = d + 2\pi i \beta dt_2$ for $t_2$ some coordinate on $L_2$ with $t_2+1~t_2$.  Then

\begin{align*}
&C(e_1,e_1,e_1) = \sum_{n\in\bZ}e^{-2\pi An^2 + 2\pi i n \beta} = \theta[x_0,\beta](2\tau,0)\\
&C(e_1,e_1,e_2) = \sum_{n\in\bZ}e^{-2\pi A(n+1/2)^2 + 2\pi i (n+1/2) \beta} = \theta[x_0 + 1/2,\beta](2\tau,0)
\end{align*}

Again we find the product on the B-side using equation \ref{eq:thetaprodex}; this time with $x = 2x_0\tau + \beta$.  It will be left as an exercise to the reader to check that composition is again preserved in this case.\\

\section{The B-side}\label{sec:EllDb}
\bigskip\bigskip
\subsection{The Derived Category of Coherent Sheaves}\hfill\\

Let $\cA$ be some abelian category and consider, $\cC\cA$ the \emph{category of chain complexes} in $\cA$, where objects are chain complexes (of objects in $\cA$) and morphism are chain maps.\ftmark{}  The \emph{homotopy category} of $\cA$, denoted $\cat{HA}$, has the same objects as $\cA$, but we consider the morphisms up to homotopy equivalence.  The \emph{derived category} of $\cA$, $\cat{DA}$, is the localization of $\cat{HA}$ over quasi-isomorphisms (i.e. we formally add inverses to quasi-isomorphisms).  In mirror symmetry we always talk about the bounded derived category of coherent sheaves on some variety $X$, $\dbcoh(X)$ (or often, as we will do here, simply denoted $\db(X)$).  This is the full subcategory of $\cat{DA}$ given by chain complexes in $\coh_X(A)$ of finite length.  These concepts are explained in more detail in section \ref{sec:DerivedCat}, but in the case we are interested in here, $\db(\Et)$, little understanding of this general topic is needed.  In this section we will collect some standard facts about $\db(X)$ and coherent sheaves on $X$, which we will use to gain a simple description of $\db(\Et)$.  All results in this section are either standard or proven in \cite{PZ,Kreuss}.\\\\
\fttext{Note that by bounded we mean complexes $A^\bullet$ such that $A^n = 0$ for some $n>>0$.  Also, when defining the $\dbcoh(X)$, we technically use cochain complexes.}

The classic example of a coherent sheaf is the space of local sections of a vector bundle.  This correspondence gives a natural equivalence between locally free coherent sheaves and vector bundles over manifolds.  With this in mind, we will often, in a slight abuse of language, call a (locally free) coherent sheaf a vector bundle or visa-versa.  To avoid unnecessary (in our simple case of elliptic curves) algebraic geometry we will offer the following theorem in lieu of the definition of a \emph{coherent sheaf}.\\

\begin{thm}All indecomposable\ftmark{} coherent sheaves over a Riemann surface are either an indecomposable vector bundle or a torsion sheaf supported at one point.\end{thm}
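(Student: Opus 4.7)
The plan is to decompose any coherent sheaf $\mF$ on a Riemann surface $X$ into a locally free part and a torsion part, and then further decompose the torsion part according to its (finite) support. Indecomposability will then force $\mF$ to be one of the two listed types.

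\medskip

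\textbf{Step 1 (Torsion / torsion-free short exact sequence).} For any coherent sheaf $\mF$ I would form the canonical sequence
$$0 \to \cat{T}(\mF) \to \mF \to \mE \to 0,$$
where $\cat{T}(\mF)$ is the torsion subsheaf and $\mE := \mF/\cat{T}(\mF)$. Since $X$ is a smooth curve, every local ring $\cO_{X,p}$ is a discrete valuation ring, and a finitely generated torsion-free module over a DVR is free. Hence $\mE_p$ is free at every stalk, so $\mE$ is locally free, i.e., a vector bundle. Also $\cat{T}(\mF)$ is automatically supported on a finite set of points (since it is coherent and torsion on a Noetherian one-dimensional space).

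\medskip

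\textbf{Step 2 (The sequence splits).} To conclude $\mF \cong \cat{T}(\mF) \oplus \mE$ I would show $\ext^1(\mE, \cat{T}(\mF)) = 0$. Because $\cat{T}(\mF)$ is supported at finitely many points $p_1,\dots,p_n$, the global $\ext^1$ reduces to the direct sum of the stalkwise $\ext^1_{\cO_{X,p_i}}(\mE_{p_i}, \cat{T}(\mF)_{p_i})$. But $\mE_{p_i}$ is free over the DVR $\cO_{X,p_i}$, hence projective as a module, so each stalk Ext vanishes and therefore so does the global Ext. This is the step I expect to be the main obstacle, because one has to justify carefully the reduction of a global Ext to a sum of stalk computations using that one of the sheaves is concentrated at finitely many points.

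\medskip

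\textbf{Step 3 (Decomposition of the torsion part by support).} A coherent torsion sheaf $\cat{T}$ on $X$ has finite support $\{p_1,\dots,p_n\}$. Choosing pairwise disjoint open neighborhoods $U_i$ of the $p_i$, one obtains a natural decomposition $\cat{T} = \bigoplus_{i=1}^n \cat{T}_{p_i}$, where $\cat{T}_{p_i}$ is the direct image of $\cat{T}|_{U_i}$ (extended by zero), a torsion sheaf supported at the single point $p_i$.

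\medskip

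\textbf{Step 4 (Conclusion).} Combining Steps 1--3, every coherent sheaf $\mF$ on $X$ decomposes as
$$\mF \;\cong\; \mE \,\oplus\, \bigoplus_{i=1}^n \cat{T}_{p_i},$$
with $\mE$ a vector bundle and each $\cat{T}_{p_i}$ a torsion sheaf supported at the single point $p_i$. If $\mF$ is indecomposable, then exactly one summand is nonzero. Either $\mF \cong \mE$, in which case $\mF$ must itself be an indecomposable vector bundle, or $\mF \cong \cat{T}_{p_i}$, in which case $\mF$ is a torsion sheaf supported at one point. This is exactly the dichotomy claimed.
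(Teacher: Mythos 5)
Your proposal is correct. Note that the paper does not actually argue this statement itself --- it simply defers to \cite{PZ} and to Hartshorne's exercise --- so your write-up supplies the standard proof that those references contain: split off the torsion subsheaf, use that torsion-free coherent sheaves on a smooth curve are locally free (DVR stalks), and decompose the torsion part over its finitely many support points. Two small points of care. First, in Step 2 the cleanest justification is either the local-to-global Ext spectral sequence (all the sheaves $\mathcal{E}xt^q(\mE,\cat{T}(\mF))$ are supported on the finite set, so the global $\ext^1$ is indeed the sum of the stalkwise ones, which vanish since $\mE_{p_i}$ is free) or, more directly, $\ext^1(\mE,\cat{T}(\mF))\cong H^1(X,\mE^*\otimes\cat{T}(\mF))=0$ because $\mE^*\otimes\cat{T}(\mF)$ is a skyscraper-type sheaf with no higher cohomology; either way you need the vanishing of $H^1$ of sheaves concentrated at points, which is exactly the ``main obstacle'' you flagged and is standard. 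Second, the finiteness of the support of the torsion part uses compactness (or that one works with a smooth projective curve), which is the setting actually used in the paper, so your argument covers the intended statement.
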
\begin{proof}See \cite{PZ} or \cite[Ex. III.6.11]{HartAlgGeo}.\end{proof}
\fttext{A sheaf (or vector bundle) $X$ is \emph{indecomposable} if $X\cong X_1 \oplus X_2 \rimp X_1\cong 0 \text{ or } X_2\cong 0$.}

\begin{rem}A torsion sheaf, $\mF$, on $\Et$ supported at a single point, $\zeta_0\in \Et$, is determined by a finite dimensional (complex) vector space $V:=\mF_{\zeta_0}$ and a nilpotent endomorphism $N\in End(V)$.  Following \cite{PZ,Kreuss}, we will denote such an sheaf by $S_\tau(\zeta_0,V,N)$.  In other words, 
$$S_\tau(\zeta_0,V,N):=\cO_{r\zeta_0}\otimes V/<\zeta - \zeta_0 - \frac{1}{2\pi i} N>$$  
where $\cO_{r\zeta_0}$ is defined by the exact sequence 
$$\cO\xrightarrow{(\zeta-\zeta_0)^r}\cO\to \cO_{r\zeta_0}$$.\\

$S_\tau(\zeta_0,V,N)$ is indecomposable if and only if $N$ has a one dimensional kernel (as this implies $N^r=0$ and $N^{r-1}\neq 0$ for $r=\dim(V)$).\end{rem}  

For a object $\mF$ in $\db(C)$, let $\mF[-n]$ denote the chain complex composed entirely of zero objects except for its $n\ith$ degree term, which is $\mF$.\footnote{We will often abuse notation by denote $\mF[0] = \mF$.  Also, if $\mF$ is already a chain in some category of complexes, we will use $\mF[-n]$ to denote the corresponding shifted chain.}\\

\begin{thm}[\cite{Kreuss}]
Let $C$ be a compact Riemann surface.  Finite direct sums of objects $\mF[n]$, where $\mF$ is an indecomposable coherent sheaf on $C$, form a full subcategory of $\db(C)$ which is equivalent to $\db(C)$.\\
\end{thm}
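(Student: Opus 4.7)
The plan is to establish that every object of $\db(C)$ is isomorphic, in the derived category, to a complex concentrated in a single direct sum of shifted cohomology sheaves, and then invoke the classification theorem (together with Krull--Schmidt) to split those cohomology sheaves into indecomposables. Since the subcategory in question is taken to be full by definition, what must be shown is essential surjectivity of the inclusion. Concretely, I want to show that for any $X^\bullet \in \db(C)$,
\[
X^\bullet \;\cong\; \bigoplus_{n\in\bZ} H^n(X^\bullet)[-n]
\]
in $\db(C)$, after which each $H^n(X^\bullet)$ decomposes into indecomposable coherent sheaves by the preceding theorem and Krull--Schmidt for $\coh(C)$.

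The central mechanism is the truncation triangle. For each $n$ the standard good truncations fit into a distinguished triangle
\[
\tau_{\le n-1}X^\bullet \;\longrightarrow\; \tau_{\le n}X^\bullet \;\longrightarrow\; H^n(X^\bullet)[-n] \;\xrightarrow{\;\delta_n\;}\; \tau_{\le n-1}X^\bullet[1].
\]
The obstruction to splitting this triangle is the class $\delta_n$, which lies in $\Hom_{\db(C)}\!\bigl(H^n(X^\bullet)[-n],\tau_{\le n-1}X^\bullet[1]\bigr)$. I would proceed by induction on the length of the complex: supposing inductively that $\tau_{\le n-1}X^\bullet \cong \bigoplus_{k\le n-1} H^k(X^\bullet)[-k]$, the obstruction group becomes
\[
\bigoplus_{k\le n-1}\Hom_{\db(C)}\!\bigl(H^n(X^\bullet),H^k(X^\bullet)[n+1-k]\bigr)
\;=\;\bigoplus_{k\le n-1}\ext^{\,n+1-k}\!\bigl(H^n(X^\bullet),H^k(X^\bullet)\bigr),
\]
and each exponent $n+1-k$ is at least $2$.

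The key geometric input, and in my view the one nontrivial point of the argument, is that on a (smooth) Riemann surface $\ext^{\,i}(\mF,\mG)=0$ for $i\ge 2$ and any coherent $\mF,\mG$. For locally free $\mF$ this is a direct consequence of Grothendieck's vanishing theorem quoted earlier in the paper, applied to $\mF^{*}\otimes\mG$. For a general coherent sheaf, one reduces to the locally free case by writing a short locally free resolution, which exists on a smooth curve because every coherent sheaf has homological dimension at most one. I would verify this carefully (this is really the heart of the proof) and then conclude that every obstruction class $\delta_n$ vanishes, so the triangle splits and the induction carries through.

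Having established the splitting, an easy dévissage gives $X^\bullet \cong \bigoplus_n H^n(X^\bullet)[-n]$ in $\db(C)$. Applying the previous theorem, each $H^n(X^\bullet)$ is a finite direct sum of indecomposable coherent sheaves (either an indecomposable vector bundle or a torsion sheaf supported at a point), so $X^\bullet$ is isomorphic to a finite direct sum of shifts $\mF[-n]$ of indecomposable coherent sheaves, proving essential surjectivity. Fullness and faithfulness of the inclusion are automatic for a full subcategory, which completes the equivalence.
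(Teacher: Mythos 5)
The paper never proves this statement in-text — it is quoted from \cite{Kreuss} — so there is no internal argument to compare against; your d\'evissage is exactly the standard proof lying behind that citation: $\coh(C)$ is hereditary, so every bounded complex is isomorphic in $\db(C)$ to the direct sum of its shifted cohomology sheaves, and Krull--Schmidt plus the classification of indecomposables finishes essential surjectivity. Your truncation-triangle induction, the identification of the obstruction group with $\bigoplus_{k\le n-1}\ext^{\,n+1-k}\bigl(H^n(X^\bullet),H^k(X^\bullet)\bigr)$ (all exponents $\ge 2$), and the reduction of the whole theorem to essential surjectivity of the inclusion are all correct.

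The one step that does not yet close as written is the one you yourself flag as the heart: deducing $\ext^i(\mF,\mG)=0$ for $i\ge 2$ from Grothendieck vanishing via a two-term locally free resolution $0\to E_1\to E_0\to\mF\to 0$. The long exact sequence in $\ext(-,\mG)$ kills everything in degrees $i\ge 3$, but for $i=2$ it only exhibits $\ext^2(\mF,\mG)$ as the cokernel of $H^1(C,E_0^\vee\otimes\mG)\to H^1(C,E_1^\vee\otimes\mG)$, which is not obviously zero. You need one further input: for instance, split $\mF$ into its torsion and locally free parts (possible on a smooth curve) and use that sheaves with zero-dimensional support have vanishing higher cohomology to see the relevant $H^1$-map is surjective; or use the local-to-global Ext spectral sequence together with the fact that $\mathcal{E}xt^1(\mF,\mG)$ has zero-dimensional support; or, most economically, invoke the lemma already stated in this section of the paper, that $\ext^k(A,B)=0$ for coherent sheaves on a Riemann surface unless $k\in\{0,1\}$ — this is precisely the hereditarity your induction needs. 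With that substitution the argument is complete and matches the cited source in spirit.
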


We can state Serre duality on the B-side as follows (see \cite{Kreuss} for a proof),
\begin{lem}\label{lem:Bserre}Let $\mF$ and $\mG$ be to coherent sheaves on $\Et$, then there is a functorial isomorphism $\Hom(\mF,\mG)\cong \ext^1(\mG,\mF)^*$.
\end{lem}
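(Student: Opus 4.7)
The plan is to deduce the claim from Serre duality on $\Et$. Although the version recorded earlier in the paper is stated only for holomorphic vector bundles, the natural generalization to arbitrary coherent sheaves on a smooth projective variety $X$ of dimension $n$ reads
\begin{equation*}
\ext^i(\mF,\mG)\cong \ext^{n-i}(\mG,\mF\otimes K_X)^*.
\end{equation*}
First I would invoke this general form (Grothendieck--Serre duality) on $X=\Et$. Since $\Et$ is a one-dimensional Calabi--Yau, we have $n=1$ and $K_{\Et}\cong \cO_{\Et}$, so specializing to $i=0$ gives the desired isomorphism $\Hom(\mF,\mG)\cong \ext^1(\mG,\mF)^*$ immediately, with functoriality in both variables inherited from the naturality of Grothendieck--Serre duality.

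If one prefers to remain within the vector bundle framework already set up in the paper, one can reduce to cases using the classification of indecomposable coherent sheaves on $\Et$ recalled above. In the purely locally free case, $\mathcal{H}om(\mF,\mG)\cong \mF^*\otimes \mG$ is itself a vector bundle and the Serre duality for bundles stated in the excerpt applies directly; together with $\mathcal{H}om(\mG,\mF)\cong (\mF^*\otimes \mG)^*$ and the triviality of $K_{\Et}$, this yields the claim. The cases where $\mF$ or $\mG$ (or both) is a torsion sheaf $S_\tau(\zeta_0,V,N)$ are handled by using the local resolution $\cO\xrightarrow{(\zeta-\zeta_0)^r}\cO\to \cO_{r\zeta_0}$, which globalises to a length-one locally free resolution of the torsion sheaf on $\Et$; tensoring with $\mF$ or $\mG$ and taking hypercohomology then expresses both $\Hom(\mF,\mG)$ and $\ext^1(\mG,\mF)$ as cohomology of explicit two-term complexes of vector bundles, to which the bundle version of Serre duality again applies.

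The main obstacle would be the bookkeeping in the torsion case: one must produce a concrete pairing $\Hom(\mF,\mG)\times\ext^1(\mG,\mF)\to \bC$ from the resolution, check its nondegeneracy, and verify independence of the choice of resolution. Non-degeneracy ultimately reduces to the Serre duality residue pairing on $\Et$ together with the computations that $H^0(\Et,\cO_{r\zeta_0})$ is $r$-dimensional and $H^1(\Et,\cO_{r\zeta_0})=0$; functoriality is clear from the naturality of the connecting maps. For the purposes of the paper the cleanest route is almost certainly to cite the coherent-sheaf version of Serre duality directly, since the detailed pairing is only needed implicitly when constructing the mirror functor on torsion sheaves later in the exposition.
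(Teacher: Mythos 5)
Your proposal is correct and follows essentially the same route as the paper, which offers no independent argument for this lemma but simply appeals to the coherent-sheaf form of Serre duality on $\Et$ (citing Kreussler for the proof) — exactly your first, preferred option of invoking Grothendieck--Serre duality with $n=1$ and $K_{\Et}\cong\cO_{\Et}$. Your additional sketch reducing to vector bundles and torsion sheaves via two-term locally free resolutions is a sound way to make the citation self-contained, but it goes beyond what the paper itself does.
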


For any $A,B$ objects of any abelian category we have that\footnote{This is often taken as a definition, see \cite{Keller} or \cite[section I.6]{HartResDua} for an explanation.}\\
$$\Hom_{\cat{DA}}(A[m],A[n]) \cong \Hom_{\cat{DA}}(A,B[m-n])\cong \ext^{m-n}(A,B)$$
\noindent
Letting our abelian category be that of coherent sheaves on $\Et$ and using the above lemma this gives us functorial isomorphisms
$$\begin{array}{l}
\Hom_{\db(\Et)}(A,B[1]) = \ext^1(A,B) = \Hom(B,A)^*\\
\Hom_{\db(\Et)}(A,B) = \ext^0(A,B):=\Hom(A,B)
\end{array}$$
\noindent

\begin{lem}For $A,B$ coherent sheaves on a Riemann surface, $\ext^k(A,B)=0$ unless $k\in {0,1}$.\end{lem}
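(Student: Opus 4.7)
The plan is to handle vanishing in the two directions separately. The lower bound, $\ext^k(A,B) = 0$ for $k < 0$, is automatic from the general theory of derived functors, so the substance is showing $\ext^k(A,B) = 0$ for every $k \geq 2$.

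My main tool would be the local-to-global Ext spectral sequence
$$E_2^{p,q} = H^p(C, \mathcal{E}xt^q(A,B)) \Longrightarrow \ext^{p+q}(A,B).$$
Grothendieck's vanishing theorem (already quoted in the paper) forces $E_2^{p,q} = 0$ for $p \geq 2$, since $\dim_{\mathbb{C}} C = 1$. The smoothness of $C$ makes each stalk $\cO_{C,x}$ a discrete valuation ring, over which every finitely generated module has projective dimension at most $1$; consequently the stalks $(\mathcal{E}xt^q(A,B))_x = \ext^q_{\cO_{C,x}}(A_x, B_x)$ vanish for $q \geq 2$, so $\mathcal{E}xt^q(A,B) = 0$ as a sheaf for $q \geq 2$. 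The spectral sequence thus lives inside the $2 \times 2$ square $0 \leq p,q \leq 1$, immediately yielding $\ext^k(A,B) = 0$ for $k \geq 3$.

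The main obstacle is the residual case $k = 2$, where a priori $\ext^2(A,B)$ can pick up the contribution $E_2^{1,1} = H^1(C, \mathcal{E}xt^1(A,B))$. To kill this I would invoke the structure theorem stated just above and decompose $A = A_{\mathrm{lf}} \oplus A_{\mathrm{tor}}$ into a locally free summand and a finite sum of torsion sheaves concentrated at points. The local Ext sheaves $\mathcal{E}xt^q(A_{\mathrm{lf}}, B)$ vanish for $q \geq 1$ (a locally free sheaf is locally $\cO^r$), while $\mathcal{E}xt^1(A_{\mathrm{tor}}, B)$ is supported on the finite point set where $A_{\mathrm{tor}}$ lives, so $\mathcal{E}xt^1(A,B)$ is a direct sum of skyscraper sheaves. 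Skyscrapers are flasque and hence have vanishing first cohomology, so $E_2^{1,1} = 0$ and therefore $\ext^2(A,B) = 0$.

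Alternatively, and in one stroke, one could invoke the general coherent-sheaf form of Serre duality on $C$, namely $\ext^k(A,B) \cong \ext^{1-k}(B, A \otimes K_C)^*$ (an extension of Lemma~\ref{lem:Bserre}): for $k \geq 2$ the right-hand side has $1-k \leq -1$ and thus vanishes for formal reasons, finishing the proof without any spectral-sequence analysis.
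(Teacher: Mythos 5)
Your argument is correct, but it takes a different route from the paper, which simply invokes the general fact that on an $n$-dimensional complex manifold the $\ext^k$ functors vanish for $k>n$ and cites Hartshorne (Ex.\ III.6.5) for it. Your spectral-sequence proof is essentially a self-contained expansion of that citation: the local-to-global sequence $H^p(C,\mathcal{E}xt^q(A,B))\Rightarrow \ext^{p+q}(A,B)$, Grothendieck vanishing for $p\geq 2$ (note you need it for arbitrary coherent sheaves, not just the vector-bundle form quoted in the paper, but that is the standard statement), the DVR observation that stalks have projective dimension at most $1$, and the flasqueness of the skyscraper $\mathcal{E}xt^1(A,B)$ to kill $E_2^{1,1}$. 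This buys transparency --- it shows exactly where smoothness and $\dim_{\bC}C=1$ enter --- at the cost of length; you could even skip the decomposition $A=A_{\mathrm{lf}}\oplus A_{\mathrm{tor}}$, since on a reduced curve any coherent sheaf is locally free off a finite set, so $\mathcal{E}xt^1(A,B)$ automatically has finite support. Your one-line alternative via Serre duality is also valid but deserves two caveats: it requires $C$ compact (the spectral-sequence argument does not), and it uses the full coherent-sheaf form $\ext^k(A,B)\cong\ext^{1-k}(B,A\otimes K_C)^*$, which is genuinely stronger than Lemma \ref{lem:Bserre} as stated in the paper (that lemma is only the $\Hom$/$\ext^1$ case on $\Et$, where $K$ is trivial), so quoting it as an ``extension'' would need its own reference; standard proofs of that duality typically already know the finite homological dimension you are trying to prove, so the spectral-sequence argument is the more honest of your two routes.
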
\begin{proof}In general we can say that for a complex $n$-dimensional manifold the $\ext^k$ functor vanishes for $k>n$.  See \cite[Ex. III.6.5]{HartAlgGeo}.\end{proof}

We will discuss a mirror to Serre duality in section \ref{sec:ellequiv}.\\


\bigskip\bigskip
\subsection{The Classification of Holomorphic Vector Bundles on $\Et$}\hfill\\

Let $V$ be an finite dimensional (complex) vector space and consider the holomorphic vector bundle on $\Eq$ given by
$$F_q(V,A)=\bC^* \times V/(u,v)\sim (uq,A\cdot v)$$
for any $A\in \Gl(V)$.\\

As all homomorphic vector bundles on $\bC^*$ are trivial, the related argument in section \ref{sec:EllLB} still holds that we can describe all line bundles in this way (uniquely up to $B:\bC^*\to \mathrm{GL}(V)$ such that $A(u) = B(qu)A(u)B(u)\inv$).\\

If $A=\exp(N)$ for $N$ a nilpotent endomorphism\footnote{$\exp(N)$ is always invertible as $\det(e^N) = e^\text{Tr(N)}$.} with a one dimensional kernel, then $F_q(V,\exp(N))$ is indecomposable.  Moreover, we have the below classification (theorem \ref{thm:atiyah}) by Atiyah \cite{Avbe}.  

Consider $r$-fold covering $\pi_r: E_{q^r}\to E_q$.  $\pi_r$ defines a functor through its pullback and pushforward which will commute with our mirror functor (we will discuss this functor and its mirror in more detail later (see section \ref{sec:isog}).\\

\begin{thm}{\label{thm:atiyah}Every indecomposable holomorphic vector bundle on $\Et$ is of the form $\pi_{r*}(\mL_{q^r}(\vphi)\otimes F_{q^r}(V,\exp(N)))$ for some $\vphi$ and some nilpotent $N$ with a one dimensional kernel.}\end{thm}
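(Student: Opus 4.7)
The plan is to combine the previous subsection's description---every holomorphic vector bundle on $E_q$ has the form $F_q(W,A)$ for a holomorphic $A:\bC^*\to \mathrm{GL}(W)$, unique up to the gauge relation---with the isogenies $\pi_r:E_{q^r}\to E_q$ to reduce an indecomposable bundle $\mathcal{E}$ to a unipotent twist of a line bundle pulled up from a suitable cover. The first structural input is that every indecomposable $\mathcal{E}$ of rank $n$ and degree $d$ is automatically semistable: if its Harder--Narasimhan filtration were nontrivial, then between graded pieces of different slopes both $\Hom$ and $\ext^1$ would vanish (by Serre duality on $\Et$ together with the slope inequality), so the filtration would split, contradicting indecomposability.

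I would then handle first the coprime case $h:=\gcd(n,d)=1$, where I claim $\mathcal{E}\cong \pi_{n*}\mL_{q^n}(\varphi)$ for an appropriate $\varphi$. Pulling back along $\pi_n$ yields a semistable bundle on $E_{q^n}$ of rank $n$ and degree $nd$ in which every Jordan--H\"older factor has slope $d$; the $\ext^1$ between any two such factors vanishes by Serre duality, so $\pi_n^*\mathcal{E}=\bigoplus_{i=0}^{n-1}\mathcal{M}_i$ splits completely into line bundles. The projection formula together with indecomposability of $\mathcal{E}$ then forces the deck group $\bZ/n\bZ$ to permute the $\mathcal{M}_i$ transitively, so setting $\mathcal{M}_0=\mL_{q^n}(\varphi)$ gives $\mathcal{E}=\pi_{n*}\mathcal{M}_0$, the stated form with $V=\bC$ and $N=0$.

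For general $(n,d)$, write $r=n/h$, $e=d/h$, and let $\mathcal{F}_0$ be the stable rank-$r$ degree-$e$ bundle produced by the previous step. I would show by induction on $h$ that any indecomposable $\mathcal{E}$ with invariants $(n,d)$ is a length-$h$ tower of self-extensions of $\mathcal{F}_0$, using that $\End(\mathcal{F}_0)=\bC$ (stability) and that $\ext^1(\mathcal{F}_0,\mathcal{F}_0)$ is one-dimensional (Serre duality and Riemann--Roch). Transported across $\pi_r$, such a tower is realized on $E_{q^r}$ as $\mL_{q^r}(\varphi)\otimes F_{q^r}(V,\exp(N))$ with $\dim V=h$, and indecomposability of $\mathcal{E}$ forces $\ker N$ to be one-dimensional, since any nontrivial block-sum decomposition of $N$ would propagate through $\pi_{r*}$ to a direct-sum decomposition of $\mathcal{E}$.

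The main obstacle is the transitivity-of-Galois step in the coprime case: one must exclude the possibility that $\pi_n^*\mathcal{E}$ splits into several Galois orbits, which would produce a direct-sum decomposition of $\mathcal{E}$ after pushforward. This reduces to the fact that the moduli of stable bundles of coprime rank and degree on $\Et$ is itself isomorphic to $\Et$, so the deck group acts transitively on the fibre of the natural determinant-like map; pinning this down carefully is what elevates the argument above a plausibility sketch.
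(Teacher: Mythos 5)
First, note that the paper does not prove this statement at all: it is quoted as Atiyah's classification and cited to \cite{Avbe}, so you are attempting something the text deliberately treats as a black box. Your route (indecomposable $\Rightarrow$ semistable via the Harder--Narasimhan filtration and Serre duality, then the coprime case by descent along $\pi_n$, then iterated self-extensions of a stable $\mathcal{F}_0$ controlled by $\End(\mathcal{F}_0)=\bC$ and $\dim\ext^1(\mathcal{F}_0,\mathcal{F}_0)=1$) is a legitimate modern strategy and quite different from Atiyah's original induction on rank via sections and elementary modifications; the semistability step and the reduction of the non-coprime case to a ``tower'' tensored across the projection formula are essentially sound.

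However, the coprime case as written has genuine gaps. Your splitting argument claims $\ext^1$ between any two Jordan--H\"older factors of $\pi_n^*\mathcal{E}$ vanishes, but this is false when two factors are isomorphic: for a line bundle $\mathcal{M}$ of degree $d$ one has $\ext^1(\mathcal{M},\mathcal{M})\cong H^1(\cO)\cong\bC\neq 0$, so complete splitting of the pullback does not follow from Serre duality alone. What is really needed is the fact that the pullback of a stable bundle along a finite \'etale cover is polystable (or, equivalently, that the $n$ factors are pairwise non-isomorphic), and that requires its own argument. You also tacitly use that the stable factors of slope $d\in\bZ$ must be line bundles, i.e.\ that stable bundles on $\Et$ have coprime rank and degree --- a fact at essentially the same depth as the theorem being proved. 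Finally, for the transitivity-of-Galois step you propose to invoke that the moduli space of stable bundles of coprime rank and degree is isomorphic to $\Et$; but that statement is normally \emph{deduced} from Atiyah's classification (or from a Fourier--Mukai transform), so as stated the argument is circular. A non-circular substitute would be to show directly that $\mathcal{E}\otimes P\cong\mathcal{E}$ for the order-$n$ torsion line bundle $P$ defining the cover, or to compute $\End(\pi_n^*\mathcal{E})$ as a module over the deck group and use $\End(\mathcal{E})=\bC$; until one of these is supplied, the central claim $\mathcal{E}\cong\pi_{n*}\mL_{q^n}(\vphi)$ remains a plausibility sketch rather than a proof.
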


In the next section we will formally add biproducts on the A-side and so when defining our functor (and equivalence of additive categories), it will be sufficient to define it on these indecomposable bundles.  Moreover, we will have that this equivalence commutes with $\pi_*$ and thus only have to consider objects of the form $\mL(\vphi)\otimes F(V,\exp(N))$.  Once the functorality of $\pi_*$ is explicitly established, the following proposition tells us our homsets can be again be dealt with using bases of theta functions (with constant vector coefficients).

\begin{lem}
Let $\vphi = (t_x^*\vphi_0) \cdot \vphi_0^{n-1}$ for some $n>0$.  Then for any nilpotent $N\in \End(V)$, there is a canonical isomorphism
\begin{align}\label{thm:H0}
\Psi_{\vphi,N}:= H^0(\mL(\vphi))\otimes V &\arriso H^0(\mL(\vphi)\otimes F(V,\exp(N))\\
f\otimes v &\mapsto \exp(DN/n)f\cdot v
\end{align}
Where $D = -u\frac{d}{du} = -\frac{1}{2\pi i} \ddz$. 
\end{lem}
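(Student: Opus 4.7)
My plan is to treat both sides of the claimed isomorphism as spaces of $V$-valued holomorphic functions on $\bC^*$ distinguished by their quasi-periodicity, and to verify directly that $\exp(DN/n)$ intertwines the two conditions. A section of $\mL(\vphi)\otimes V$ is a holomorphic map $h:\bC^*\to V$ satisfying $h(qu)=\vphi(u)\,h(u)$, while a section of $\mL(\vphi)\otimes F(V,\exp(N))$ is a holomorphic $g:\bC^*\to V$ satisfying $g(qu)=\vphi(u)\exp(N)\,g(u)$. The operator $\exp(DN/n)$ is well-defined on such functions because $D\otimes N$ is locally nilpotent in its $N$-factor, so the series terminates.

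The core of the argument will be an operator identity. First, $D=-u\frac{d}{du}$ commutes with the $q$-shift $u\mapsto qu$, since $(Df)(qu)=-qu\,f'(qu)=D(u\mapsto f(qu))(u)$; this reduces the transformation law of $\Psi(f\otimes v)$ to computing how $\exp(DN/n)$ conjugates with multiplication by $\vphi$. Here the special form $\vphi=(t_x^*\vphi_0)\cdot\vphi_0^{n-1}$ matters decisively: it is a nonzero constant times $u^{-n}$, so $D\vphi=n\vphi$, which gives the conjugation relation $\vphi^{-1}D\vphi=D+n$ on scalar functions. Since multiplication by $\vphi$ commutes with $N$ (which acts only on the $V$-factor), iterating and exponentiating yields
\[
\vphi^{-1}\exp(DN/n)\,\vphi \;=\; \exp\!\Bigl(\tfrac{(D+n)N}{n}\Bigr) \;=\; \exp(N)\exp(DN/n),
\]
where the factorization uses that $DN$ and $N$ commute as operators on $V$-valued functions. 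Combined with $f(qu)=\vphi(u)f(u)$, this gives $\Psi(f\otimes v)(qu)=\vphi(u)\exp(N)\,\Psi(f\otimes v)(u)$, so $\Psi$ lands in $H^0(\mL(\vphi)\otimes F(V,\exp(N)))$.

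For bijectivity I would observe that $\exp(DN/n)$ is invertible on $V$-valued holomorphic functions on $\bC^*$, with explicit inverse $\exp(-DN/n)$ (both series terminate by nilpotency of $N$). Running the same conjugation argument with $N$ replaced by $-N$ shows this inverse carries $H^0(\mL(\vphi)\otimes F(V,\exp(N)))$ back into the subspace of $V$-valued functions satisfying $h(qu)=\vphi(u)h(u)$; any such $h$ decomposes in a basis of $V$ into honest sections of $\mL(\vphi)$, giving an element of $H^0(\mL(\vphi))\otimes V$. As a consistency check, Riemann--Roch and Grothendieck vanishing give dimension $n\cdot\dim V$ on both sides. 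The main (and essentially only) obstacle is nailing down the operator identity above: it rests on the very special fact that $D\vphi/\vphi=n$ is a constant, and on careful bookkeeping that $D\otimes N$ and $\bid\otimes N$ commute so that the exponential splits as $\exp(N)\exp(DN/n)$.
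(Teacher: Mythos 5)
Your proposal is correct, and it is essentially the argument the paper defers to \cite{PZ}: identify sections with $V$-valued functions on $\bC^*$ having the appropriate quasi-periodicity, and use $D\vphi = n\vphi$ (valid since $\vphi$ is a constant times $u^{-n}$) to get $\vphi^{-1}\exp(DN/n)\vphi = \exp(N)\exp(DN/n)$, so that $\exp(\pm DN/n)$ intertwines the two transformation laws. The nilpotency of $N$ makes all series finite, so well-definedness and invertibility are immediate, and your verification is complete.
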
\begin{proof}
This is simple to check.  See \cite{PZ}.
\end{proof}

Consider two vector bundles $\Nu_i = \mL(\vphi_i)\otimes F(V_i,\exp(N_i))$ with $\vphi_i = (t^*_{x_i}\vphi_0) \cdot\vphi_0^{n_i-1}$ and $n_1<n_2$, using lemma \ref{thm:H0}, we then have
\begin{align*}
\Hom(\Nu_1,\Nu_2) &= H^0(\mL(\vphi_1\inv \vphi_2)\otimes F(V_1^*\otimes V_2,\exp(\bid\otimes N_2 - N_1^*\otimes\bid)\\
&\cong H^0(\mL(\vphi_1\inv \vphi_2))\otimes \Hom(V_1,V_2)
\end{align*}

\noindent
Note: $F(V,A)^* = F(V^*,(A\inv)^*) \cong F(V,A\inv)$.\\




\section{The Equivalence}\label{sec:ellequiv}
\bigskip\bigskip
\subsubsection{The Fukaya-Kontsevich Category}\hfill\\

The equivalence proven in \cite{PZ,Kreuss} is not that of \Ainf{}-categories, but additive categories $\db(\Et)$ and $\cat{FK}^0(E^\tau):=\underline{H^0(\Fuk(E^\tau))}$.\ftmark{}  Let us now explain what we mean by this A-side category.\\
\fttext{Note that we only defined $\fm_k$ in the case of $k+1$ distinct Lagrangian submanifolds.  \cite{PZ,Po} only deal with this case and \cite{Kreuss} sidesteps the higher \Ainf{} operators, $\fm_k$ for $k\geq 3$, altogether by constructing $\fuk^0$ directly.  A definition of $\fm_k$ in the general case is given by \cite{FuAbel,FOOO2009}, but to the author's knowledge, no proof of equivalence between \Ainf{}- (or triangulated) categories in the general (non-transversal) case has been published.  It is likely, however, this proof is thought implicit in the combined works of \cite{Po,FuAbel}.}

As we showed previously (see remark \ref{rmk:kgon}), the differential operator on $\fuk(E^\tau)$ is trivial.  Thus the cohomology complex is identical to the complex given by homsets as graded by the Maslov-Viterbo index \ref{eq:MVindex}.
First, following \cite{PZ}, lets descend/restrict to the trivial cohomology, i.e. $\fuk^0:=H^0(\Fuk(E^\tau))$, where we have the same objects, but only consider morphisms $(L,\alpha,M)\to (L',\alpha',M')$ such that $\mu_{\sL,\sL'} = 0$ (i.e. $\alpha'-\alpha\in [0,1)$.\\
We only dealt with transversal morphism above, in that case ($L\neq L'$), we have\footnote{We will generally drop the $\fuk^0$ from our homsets when it is clear from context.  Same goes for the B-side.}
$$\Hom_{\fuk^0(E^\tau)}(\sL,\sL') = 
\pws{
0&\text{  if } \alpha' - \alpha\notin [0,1)\\
\bigoplus_{p\in L\cap L'}\Hom(M_p,M'_p)&\text{  if } \alpha' - \alpha\in [0,1)}$$

\noindent
When $L=L'$, we have
$$\Hom_{\fuk^0(E^\tau)}(\sL,\sL') = \pws{
0&\text{  if } \alpha' - \alpha\notin {0,1}\\
H^0(L,\Hom(M,M')) = \Hom(M,M')&\text{  if } \alpha' = \alpha\\
H^1(L,\Hom(M,M'))&\text{  if } \alpha' = \alpha + 1}$$

Descending to $H^0(\fuk)$ leaves us with a true (i.e. associative) preadditive category.\footnote{We will ignore the higher \Ainf{} operators, $\fm_k$ for $k\geq 3$, as we are not seeking an equivalence of \Ainf{}-categories in this weakened case of HMS.}  In order to have equivalence with the entire derived category on the B-side, we must also consider the above degree one morphisms (see \cite{Kreuss} for further details).\\

Recall that in $\fuk^0(E^\tau)$, $(L,\alpha,M)[1]:=(L,\alpha+1,M)$.\\
A proof of the following ``symplectic Serre duality" can again be found in \cite{Kreuss}.
\begin{lem}{\label{thm:sympserre}For $\sL_1,\sL_2$ objects in $\fuk^0(E^\tau)$,  there exists a canonical isomorphism $$\Hom(\sL_1,\sL_2[1])\cong \Hom(\sL_2,\sL_1)^*$$}\end{lem}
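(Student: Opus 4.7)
The plan is to exhibit the isomorphism by constructing a canonical perfect pairing
\[
\Hom(\sL_1,\sL_2[1])\otimes\Hom(\sL_2,\sL_1)\longrightarrow\bC,
\]
in direct parallel with the B-side Serre duality recalled in Lemma \ref{lem:Bserre}. The pairing will be assembled from fiberwise trace pairings on the local systems, and nondegeneracy will be verified following the two-case definition of morphisms in $\fuk^0(\Er)$.

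In the transversal case $L_1\ne L_2$, the key geometric observation is that two distinct geodesics on the real two-torus lift to parallel lines in $\bC$ precisely when $\alpha_1\equiv\alpha_2\pmod{1}$, in which case $L_1\cap L_2=\emptyset$ and both sides vanish trivially. Outside this degenerate situation, inspection of the Maslov--Viterbo condition $\alpha'-\alpha\in[0,1)$ shows that the two homsets are simultaneously nonzero exactly when $\alpha_2-\alpha_1\in(-1,0)$, where they are $\bigoplus_{p\in L_1\cap L_2}\Hom((M_1)_p,(M_2)_p)$ and $\bigoplus_{p\in L_1\cap L_2}\Hom((M_2)_p,(M_1)_p)$ respectively. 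The canonical perfect pairing $(f,g)\mapsto\mathrm{tr}(g\circ f)$ between $\Hom(V,W)$ and $\Hom(W,V)$ then supplies the isomorphism pointwise, and summing over $p\in L_1\cap L_2$ gives it globally. In the non-transversal case $L_1=L_2=:L\cong S^1$ the homsets, by the second clause of the definition of $\Hom$ in $\fuk^0$, are either $\Hom(M_1,M_2)$ or $H^1(L,\Hom(M_1,M_2))$ (with analogous expressions for the reversed pair), depending on whether the grading difference is $1$ or $0$; here the desired identification reduces to Poincar\'e duality on $S^1$ for flat bundles, $H^1(S^1,V)\cong H^0(S^1,V^*)^*$, combined with the canonical fiberwise identification $\Hom(M_1,M_2)^*\cong\Hom(M_2,M_1)$ by trace.

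To make canonicity and naturality transparent I plan to reinterpret the above pairing as composition via $\fm_2$ into $\Hom(\sL_2,\sL_2[1])$, followed by a trace functional $\Hom(\sL,\sL[1])\cong H^1(L,\End(M))\to\bC$ obtained by combining the fiberwise trace with $S^1$-integration. This exactly parallels the B-side Serre trace $\ext^1(F,F)\to\bC$ and is what one will want for eventual compatibility with the mirror functor. The main obstacle is bookkeeping with the Maslov--Viterbo grading: one must ensure that the degree-$0$ restriction defining $\fuk^0$ simultaneously selects the correct pair of nonzero homsets on the two sides of the isomorphism at the boundary integer grading differences. This is precisely what is resolved by the parallelism dichotomy in the transversal case (forcing both sides to vanish) and by the inclusion of the $\alpha'=\alpha+1$ clause in the non-transversal case (producing the $H^1$ piece that pairs with the degree-$0$ piece).
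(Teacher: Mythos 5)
Your proposal is correct, and it in fact supplies an argument where the paper gives none: the text defers the proof of Lemma \ref{thm:sympserre} entirely to \cite{Kreuss}, and your case analysis follows the same route as that cited proof (and as the B-side analogue, Lemma \ref{lem:Bserre}). The two points that carry the weight are exactly the ones you isolate: at integer grading differences $\alpha_2-\alpha_1\in\{-1,0\}$ the lifted lines are parallel, so distinct geodesics are disjoint and both homsets vanish, while in the window $\alpha_2-\alpha_1\in(-1,0)$ both sides are direct sums over $L_1\cap L_2$ and the fiberwise pairing $(f,g)\mapsto\tr(g\circ f)$ is perfect. For the non-transversal case it would be worth making the duality completely explicit rather than quoting Poincar\'e duality: with monodromies $T_i$ on the fibers $V_i$, one has $H^0(L,\Hom(M_2,M_1))=\{f\stb T_1f=fT_2\}$ and $H^1(L,\Hom(M_1,M_2))=\Hom(V_1,V_2)/\{T_2gT_1^{-1}-g\}$, and the pairing $(f,[g])\mapsto\tr(f\circ g)$ is well defined on coinvariants (using $T_1^{-1}fT_2=f$) and perfect by the standard kernel--cokernel adjunction for the trace form; this is the only step in your sketch that is asserted rather than checked. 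Your reinterpretation of the pairing as $\fm_2$-composition into $\Hom(\sL_2,\sL_2[1])$ followed by a trace functional is a sensible strengthening, paralleling the Serre trace on $\ext^1$ and convenient for later compatibility with $\Phi_\tau$, though it is not needed for the lemma itself.
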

\vspace{1 in}
$\Fuk^0(E^\tau)$ is not additive (it is preadditive\ftmark{} by construction).  If (biproduct) $\sL=\sL_1\oplus\sL_2$ exists for some nonzero objects $\sL_k$, then we have nonzero projections and embeddings $p_k:\sL\to\sL_k$ and $i_k:\sL_k\to\sL$.  This is impossible (in the case of distinct Lagrangian submanifolds) as we've restricted to just the zero graded morphisms, so $\Hom(\sL,\sL_1)$ and $\Hom(\sL_1,\sL)$ can not both be non-trivial.  
This is a problem as we are looking to show equivalence with the additive category $\db(\Et)$.  \cite{Kreuss} fixed this by inserting these biproducts formally.  We call this formally enlarged category the \emph{Fukaya-Kontsevich} category, $\cat{FK}^0(E^\tau): = \underline{H^0(\Fuk(E^\tau))}$.\\
\fttext{A category is called \emph{preadditive} (or an \emph{\textbf{Ab}-category}) if its homsets are abelian groups and the composition of morphisms is bilinear.  It is called \emph{additive} if it additionally it has a zero object and contains all finite biproducts.}\\

Let $\cA$ be a preadditive category.  We define $\underline{\cA}$ to be the category whose objects are ordered $k$-tuples ($k\geq 0$) of objects in $\cA$ and whose morphisms are matrices of morphisms in $\cA$.  Composition is given by matrix multiplication and the zero tuple is the zero object.\\

%

\bigskip\bigskip
\subsection{The Mirror Isogenies}\label{sec:isog}\hfill\\

Before we construct the functor between $\db(\Et)$ and $\cat{FK}^0(E^\tau)$, we need to define $p_r:E^{r\tau}\to E^{\tau}$ the mirror of $\pi_r$.\\

Recall that $\pi_r$ is the $r$-fold covering $E_{r\tau}\xrightarrow{\pi_r} \Et$.  The pullback of a vector bundle through $\pi_r$ is given by
$$\pi_r^*F_q(V,A)\to F_{q^r}(V,A^r)$$
and the pushforward is given by
$$\pi_{r*}F_{q^r}(V,A) = F_{q}(V^{\oplus r},\pi_*A)$$
where $\pi_*(A)\in \Gl(V^{\oplus r})$ is sends $(v_1,...,v_r)\mapsto (v_2,...,v_r,Av_1)$.\\

These functors are defined similarly on indecomposable torsion sheaves $$S_\tau(\zeta_0,V,N):=\cO_{r\zeta_0}\otimes V/<\zeta - \zeta_0 - \frac{1}{2\pi i} N>$$

The mirror to this functor, $p_r:E^\tau\to E^{r\tau}$, is induced by the automorphism $(x,y)\mapsto(rx,y)$ of the real plane (for any positive integer $r$).
  Let us look at the restriction of $p_r$ to a Lagrangian submanifold, $L$.  Let $L$ have slope $n/m$ with $\gcd(n,m) = 1$.  As $p_r$ fixes the $y$,  the degree of $\rest{p_r}_L$ is given by $d = \gcd(n,r)$.\footnote{Observe that, assuming $\gcd(n,m)=1$, $L$ wraps around $E^\tau$ $n$ times in the $y$-direction.}  We then must also have that the preimage of a Lagrangian submanifold (of slope $n/(rm)$) has $N = r/d = r/\gcd(n,r)$ connected components.  I.e. $p_r^{-1}(L) = \{L^{(1)},...,L^{(N)}\}$ (we will use this to define the pullback in a moment).\\

The pushforward $p_{r*}:\cat{FK}^0(\Ets)\to\cat{FK}^0(E^{r\tau})$ is then given by

$$p_{r*}(L,\alpha,M)\mapsto (p_r(L),\alpha',p_{r*}M)$$
where $p_{r*}:(v_1,...,v_d)\mapsto (v_2,...v_d,Mv_1)$ and $\alpha'$ is the unique appropriate value in the same interval $[k-1/2,k+1/2)$ that $\alpha$ is contained in (for some $k\in \bZ$).\\


Functorality is not hard to see.
$$\xymatrix{
\Hom(\sL,\sL')\ar[d]_{p_{r*}} &=& \bigoplus_{\tilde x\in L\cap L'}\Hom(M_{\tilde x},M_{\tilde x}')\ar@{^{(}->}[d]\\
\Hom(p_{r*}\sL,p_{r*}\sL')  &=& Z
}$$

Where
$$Z := \bigoplus_{x \in p_r(L)\cap p_r(L')} \Hom\left(\bigoplus_{\tilde x \in (\rest{p_r}_L)\inv(x)} M_{\tilde x},\bigoplus_{\tilde x' \in (\rest{p_r}_{L'})\inv(x)} M'_{\tilde x'}\right)$$\\

Similarly, the pullback $p^*_r:\cat{FK}^0(E^{r\tau})\to\cat{FK}^0(\Ets)$ is given by
$$p_r^*(L,\alpha,M) = \bigoplus_{k=1}^N(L^{(k)},\alpha',(p_r^{(k)})^*M)$$
where $p_r^{(k)}: = \rest{p_r}_{L^{(k)}}$ and $(p_r^{(k)})^*M$ is the local system pulled back in the typical way.\\

A check of the following duality between $p_r^*$ and $p_{r*}$ is given in \cite{Kreuss}.
\begin{prop}\label{isogdual}
Let $p = t'_{n/m}\circ p_r$ for $n/m\in \bQ$ and $t'_\delta(x,y): = (x-\delta,y)$.
$$\Hom(p^*\sL,\sL')\cong \Hom(\sL,p_*\sL')$$
and
$$\Hom(p_*\sL,\sL')\cong \Hom(\sL,p^*\sL')$$
The same is true on the B-side if we replace $p$ by its mirror, $\pi = t_{n\tau/m}\circ \pi_r$.
\end{prop}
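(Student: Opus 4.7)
The plan is to view this as an ambidextrous adjunction for a finite \'etale cover composed with an automorphism, and to verify it on each side explicitly by a bijection of intersection points (on the A-side) and a projection-formula argument (on the B-side). Because $t'_{n/m}$ and $t_{n\tau/m}$ are auto-equivalences of the respective categories (the A-side analogue of the already-noted functoriality of $t^*_x$ on the B-side), it is enough to prove the adjunctions for the bare covers $p_r$ and $\pi_r$ and then compose.

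On the A-side I would compute $\Hom(p_r^*\sL,\sL')$ directly from the definitions. Writing $p_r^{-1}(L)=L^{(1)}\sqcup\cdots\sqcup L^{(N)}$,
\[
\Hom(p_r^*\sL,\sL')=\bigoplus_{k=1}^N\bigoplus_{\tilde x\in L^{(k)}\cap L'}\Hom\bigl((p_r^{(k)})^*M_{\tilde x},\,M'_{\tilde x}\bigr).
\]
The crucial geometric observation is that $\bigsqcup_k (L^{(k)}\cap L')$ is precisely the preimage $p_r^{-1}(L\cap p_r(L'))$. Reindexing the double sum by $x\in L\cap p_r(L')$ and its fiber over $x$, and using that $(p_r^{(k)})^*M_{\tilde x}=M_x$ while $(p_{r*}M')_x=\bigoplus_{\tilde x\in p_r^{-1}(x)\cap L'}M'_{\tilde x}$, the expression collapses to $\bigoplus_{x}\Hom(M_x,(p_{r*}M')_x)=\Hom(\sL,p_{r*}\sL')$. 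The grading convention placing $\alpha'$ in the same half-integer interval as $\alpha$ ensures the Maslov--Viterbo indices agree, so the identification respects the restriction to $\fuk^0$. The second adjunction is the dual of the first: swapping source and target, one uses $(p_{r*}M)_x=\bigoplus_{\tilde x\in p_r^{-1}(x)\cap L}M_{\tilde x}$ to collapse the double sum in the other direction.

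On the B-side I would rewrite $\Hom(\mathcal{E},\mathcal{F})=H^0(\mathcal{E}^\vee\otimes\mathcal{F})$ and combine the projection formula $\pi_{r*}(\pi_r^*\mathcal{E}\otimes\mathcal{F})\cong\mathcal{E}\otimes\pi_{r*}\mathcal{F}$ with the Leray identity $H^0(\Et,\pi_{r*}\mathcal{G})=H^0(E_{r\tau},\mathcal{G})$. Both can be checked directly in the explicit $\mL(\vphi)\otimes F_q(V,A)$ model: on $\bC^*$ every holomorphic bundle is trivial, and $\pi_r$ corresponds to the inclusion of covering groups $q^r\bZ\subset q\bZ$, so that $\pi_{r*}$ is induction of $\bZ$-representations and $\pi_r^*$ is restriction. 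Frobenius reciprocity for this pair yields the adjunction. The reverse adjunction follows either by the same argument together with $\pi_r^!\cong\pi_r^*$ (since $\pi_r$ is \'etale), or, equivalently, by applying Serre duality (lemma \ref{lem:Bserre}) to the first adjunction, in exact parallel to how symplectic Serre duality (lemma \ref{thm:sympserre}) converts one A-side adjunction into the other.

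The main obstacle, in my estimation, is not the adjunction itself but the bookkeeping with monodromies and gradings. One has to verify that the parallel-transport structure induced on $(p_{r*}M)_x=\bigoplus_{\tilde x\in p_r^{-1}(x)\cap L}M_{\tilde x}$ really agrees with the block-shift description $(v_1,\dots,v_d)\mapsto(v_2,\dots,v_d,Mv_1)$ of $p_{r*}$ given in section \ref{sec:isog}, and analogously that pulling $F_q(V^{\oplus r},\pi_{r*}A)$ back through $\pi_r$ recovers $F_{q^r}(V,A)$ (up to the twist absorbed by the translation $t_{n\tau/m}$). Once these compatibilities are pinned down, both adjunctions are essentially formal.
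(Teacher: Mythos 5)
The paper itself does not prove this proposition; it simply defers to the check carried out in \cite{Kreuss}, where the duality is verified by direct computation on both sides. Your structural route --- reducing to the bare covers $p_r$, $\pi_r$ because the translations are (mutually adjoint, mutually inverse) auto-equivalences, then proving the two adjunctions by reindexing intersection points on the A-side and by Frobenius reciprocity/projection formula plus $\pi_r^!\cong\pi_r^*$ (or Serre duality, lemmas \ref{lem:Bserre} and \ref{thm:sympserre}) on the B-side --- is legitimate and arguably cleaner than an element-by-element check: it explains \emph{why} both adjunctions hold (finite \'etale covers are ambidextrous) rather than just confirming that they do, and it makes the mirror statement on the two sides visibly the same formal fact.

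Two points need tightening. First, your key set-theoretic identification is misstated: $\bigsqcup_k\bigl(L^{(k)}\cap L'\bigr)$ equals $p_r^{-1}(L)\cap L'$, not $p_r^{-1}\bigl(L\cap p_r(L')\bigr)$ (a point of the latter need not lie on $L'$). What you actually use afterwards is correct: $p_r$ maps $p_r^{-1}(L)\cap L'$ onto $L\cap p_r(L')$ with fiber $p_r^{-1}(x)\cap L'$ of cardinality $d=\deg\bigl(\rest{p_r}_{L'}\bigr)$ over each $x$, and that is exactly the decomposition giving $(p_{r*}M')_x$; so fix the statement, not the argument. Second, your computation covers only the transversal case $p_r^{-1}(L)\pitchfork L'$; in $\cat{FK}^0$ one must also handle the cases where some component $L^{(k)}$ coincides with $L'$ (equivalently $p_r(L')=L$), where $\Hom$ is defined as morphisms of local systems rather than a sum over intersection points, and one must check that the degree-zero (and, via the Serre-duality shift, degree-one) conditions on the gradings $\alpha$, $\alpha'$ match on the two sides --- the convention that $p_{r*}$ and $p_r^*$ keep $\alpha'$ in the same half-integer interval as $\alpha$ is what makes this work, but it should be verified, not just invoked. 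You correctly identify the remaining bookkeeping (compatibility of the block-shift monodromy of $p_{r*}$ with parallel transport, and $\pi_r^*F_q(V^{\oplus r},\pi_{r*}A)$ versus $F_{q^r}(V,A)$); with those checks written out, and the B-side adjunction stated for all coherent sheaves (it holds for any finite flat morphism, so torsion sheaves cause no trouble), your argument is complete.
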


After defining our mirror functor, $\Phi:D^b(\Et)\to \cat{FK}^0(\Ets)$, on coherent sheaves $A = \mL(\vphi)\otimes F(V,\exp(N))$ as in theorem \ref{thm:atiyah}, we will be able to extend to all indecomposable bundles by proving
$$\label{eq:isog} \Phi(\pi_{r*} A) = p_{r*}\Phi(A)$$

For morphisms, we will need to know how to send 
$$\Hom(\pi_{r_1*}\cE_1,\pi_{r_2*}\cE_2))\to \Hom(p_{r_1*}\Phi(\cE_1),p_{r_2*}\Phi(\cE_2))$$
Consider the categorical pull back of $\pi_{r_1}$ and $\pi_{r_2}$, $E_{r_1\tau}\times_\Et E_{r_2\tau}: = \{(z_1,z_2)\stb \pi_{r_1}(z_1) = \pi_{r_2}(z_2)\}$. This is a disjoint union of elliptic curves, in particular, $E_{r_1\tau}\times_\Et E_{r_2\tau} = E_{r\tau}\times \bZ_d$ for $d = \gcd(r_1,r_2)$ and $r = \mathrm{lcm}(r_1,r_2)$.\\
Let $\tilde{\pi}_{r_i,\nu}:E_{r\tau}\times \{\nu\}\to E_{r_i\tau}$  denote the restriction of the $i\ith$ term projection.  In other words, $\tilde{\pi}_{r_1,\nu}$  (resp. $\tilde{\pi}_{r_2,\nu}$) is given by composition of the translation $t_{\nu\tau}\times\bid$ ($\bid\times t_{\nu\tau}$) with the covering map $\tilde p _{r_i}:E_{r\tau}\to E_{r_i\tau}$ (defined by the lattice inclusion $<1,r_i\tau>\subset <1,r\tau>$).

Using the above properties and the fact that $\tilde p_{r_2*}\circ \tilde p_{r_1}^*$ and $p_{r_1}^*\circ p_{r_2*}$ are isomorphic functors (and defining $\tilde \pi_{r_i,\nu}$ analogously), we have the following commutative diagram.

$$\label{thm:isogmorph}\xymatrix{
\Hom(\pi_{r_1*}\cE_1,\pi_{r_2*}\cE_2)
 \ar[d]_{\Phi_\tau} &\overset{\sim}{\longrightarrow} & \displaystyle\bigoplus_{\nu=1}^d \Hom(\tilde{\pi}^*_{r_1,\nu}\cE_1,\tilde{\pi}^*_{r_2,\nu}\cE_2) 
\ar[d]^{\bigoplus \Phi_\tau}\\
\Hom(\Phi(\pi_{r_1*}\cE_1),\Phi(\pi_{r_2*}\cE_2)) \ar[d]_{\wr} &&\displaystyle\bigoplus_{\nu=1}^d \Hom(\Phi(\tilde{\pi}^*_{r_1,\nu}\cE_1)),\Phi(\tilde{\pi}^*_{r_2,\nu}\cE_2)) \ar[d]^{\wr}\\
\Hom(p_{r_1*}\Phi(\cE_1),p_{r_2*}\Phi(\cE_2))&\overset{\sim}{\longrightarrow}  &\displaystyle\bigoplus_{\nu=1}^d \Hom(\tilde{p}^*_{r_1,\nu}\cE_1,\tilde{p}^*_{r_2,\nu}\cE_2)
}$$

\begin{rem}\label{rmk:diagramsig}
The significance of this diagram is that by its definition $\pi_{{r_i},\nu}(\mL(\vphi)\otimes F(V,\exp(N)))$ is also a bundle of the form $\mL(\vphi)\otimes F(V,\exp(N))$ (as in Atiyah's classification).  Thus the diagram makes it sufficient to define $\Phi$ on just bundles of this form. 
\end{rem}

\bigskip\bigskip
\subsection{The Equivalence of $\db(\Et)$ and $\cat{FK}^0(E^\tau)$}\hfill\\

To prove the following equivalence (in the transversal case), \cite{PZ} constructed the functor $\Phi_\tau$, which we will describe below.  Later, by extending this functor to the non-transversal case and using the additive category $\cat{FK}^0(E^\tau)$, \cite{Kreuss} made the following rigorous statement.

\begin{thm}[Main Theorem \cite{PZ,Kreuss}]{\label{thm:Kreuss}$\Phi_\tau:\db(\Et)\to \cat{FK}^0(E^\tau)$ is an equivalence of additive categories compatible with the shift functors.}\end{thm}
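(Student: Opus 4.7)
The plan is to construct $\Phi_\tau$ explicitly, reducing to a tractable base case via Atiyah's classification and the mirror isogenies, then check the standard properties (well-definedness, compatibility with composition, fully faithfulness, essential surjectivity, and shift compatibility) in turn.

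First I would define $\Phi_\tau$ on objects. By Atiyah's theorem \ref{thm:atiyah}, every indecomposable non-torsion coherent sheaf is of the form $\pi_{r*}(\mL_{q^r}(\vphi)\otimes F_{q^r}(V,\exp(N)))$, so by the required intertwining $\Phi_\tau(\pi_{r*}A)=p_{r*}\Phi_\tau(A)$ (together with remark \ref{rmk:diagramsig}) it is enough to prescribe $\Phi_\tau$ on bundles of the form $\mL(\vphi)\otimes F(V,\exp(N))$ with $\vphi=(t^*_x\vphi_0)\cdot\vphi_0^{n-1}$. Send such a bundle to the graded geodesic of slope $n$ through the appropriate $x$-intercept, equipped with the complex local system on $V$ whose monodromy is determined by $\exp(2\pi i N)$ and whose unitary twist encodes the translational data (as tabulated at the top of section \ref{sec:simple}). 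Torsion sheaves $S_\tau(\zeta_0,V,N)$ are sent to vertical geodesics through $\mathrm{Re}(\zeta_0)$ with the analogous local system. The shift $[1]$ is sent to $\alpha\mapsto\alpha+1$, matching the definition of the shift functor on $\cat{FK}^0(E^\tau)$.

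Next I would define $\Phi_\tau$ on morphisms in the transversal case. Section \ref{sec:EllLB} gave an explicit basis of $\Hom(\mL_1,\mL_2)$ in terms of theta functions $t^*_{\delta\tau+\beta}f^{(k)}_j$, while the A-side homsets have an explicit basis in intersection points $e_j$. I would declare
\[
t^*_{\delta\tau+\beta}f^{(k)}_j \;\longmapsto\; e^{\pi i k\tau \delta^2}\cdot e_j,
\]
extending the pattern exhibited in the Simplest Example, and propagate to non-transversal morphisms using the commutative diagram \ref{thm:isogmorph} relating $(p_{r_1},p_{r_2})$ on the A-side to $(\pi_{r_1},\pi_{r_2})$ on the B-side; the splitting of the fiber product into components indexed by $\bZ_d$ reduces the general computation to the already defined transversal one. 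The isomorphism $\Psi_{\vphi,N}$ of lemma \ref{thm:H0} handles the extension from line bundles to higher-rank bundles with nilpotent twist, matching the Jordan-block data of the monodromy.

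The main obstacle is the verification that $\fm_2$ is intertwined with multiplication of sections, which amounts to showing the theta addition formula of proposition \ref{thm:thetaadd} matches the counting of holomorphic triangles on the A-side. Every such triangle lifts to $\bC$ by remark \ref{rmk:kgon}, and its symplectic area together with the B-field contributes a Gaussian factor $e^{2\pi i\int \phi^*\omega_\bC}$ while the boundary parallel transport (remark \ref{rmk:S1con}) contributes the monodromy phase. One then checks, rank by rank, that the resulting Gaussian/monodromy series over triangles are precisely the products appearing on the left of proposition \ref{thm:thetaadd}, producing exactly the claimed expansion in the right-hand theta basis. This is a direct but delicate bookkeeping step, and it is where the careful choice of the prefactors $e^{\pi i k\tau \delta^2}$ in the formula above is forced.

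With composition intertwined, fullness and faithfulness are immediate since $\Phi_\tau$ bijects distinguished bases of the two sides, and the degree count $h^0(\mL)=\deg(\mL)$ from remark \ref{rmk:degdim} matches the intersection count $|n_2-n_1|$ on the A-side. Essential surjectivity follows by running Atiyah's classification in reverse: any geodesic of rational slope $n/m$ arises, via the isogeny $p_r$ with $r=\gcd(n,m)$, from a geodesic of integer slope, which is the image of a bundle in Atiyah's list; vertical geodesics correspond to torsion sheaves. Compatibility with the shift functor is then matched by comparing the B-side Serre duality of lemma \ref{lem:Bserre} with the symplectic Serre duality of lemma \ref{thm:sympserre}, both of which are already established. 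Finally, non-transversal morphism spaces (parallel and coincident Lagrangians) are handled as in \cite{Kreuss} using $H^0$ and $H^1$ of the endomorphism bundle on $L$, and the formal addition of biproducts on the A-side turns $H^0(\fuk(E^\tau))$ into the additive category $\cat{FK}^0(E^\tau)$, completing the equivalence.
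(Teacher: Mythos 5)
Your outline follows essentially the same route as the paper: reduce to bundles of the form $\mL(\vphi)\otimes F(V,\exp(N))$ and indecomposable torsion sheaves via Atiyah's theorem \ref{thm:atiyah}, the isogenies $\pi_r$, $p_r$ and remark \ref{rmk:diagramsig}; send the theta-function bases of the B-side homsets to intersection points with a Gaussian prefactor; verify that $\fm_2$ matches multiplication of sections by lifting triangles to $\bC$ and comparing with the addition formula of proposition \ref{thm:thetaadd}; treat non-transversal homsets and formal biproducts as in \cite{Kreuss}; and match the shift functors via the two Serre dualities (lemmas \ref{lem:Bserre} and \ref{thm:sympserre}). This is the paper's construction, and like the paper you defer the detailed composition bookkeeping to \cite{PZ,Kreuss}, so the overall strategy is sound.

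A few details would need repair if written out in full. First, the monodromy of the mirror local system of $\mL(\vphi)\otimes F(V,\exp(N))$ with $\vphi=(t^*_{a\tau+b}\vphi_0)\cdot\vphi_0^{n-1}$ is $e^{-2\pi i b}\exp(N)$, not $\exp(2\pi i N)$: the translation component $b$ enters the monodromy itself, and the nilpotent part enters without the $2\pi i$. Correspondingly, the paper's prefactor on morphisms is $e^{\pi i\tau(n_1-n_2)\delta_{21}^2}$, i.e.\ the opposite sign to your $e^{\pi i k\tau\delta_{21}^2}$ with $k=n_2-n_1$, and in higher rank it is accompanied by the factor $\exp[\delta_{21}(N_2-N_1^*-2\pi i(n_2-n_1)\beta_{21})]$, which your appeal to lemma \ref{thm:H0} does not by itself produce. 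Second, in your essential-surjectivity step the isogeny index is wrong: since $p_r$ is induced by $(x,y)\mapsto(rx,y)$, a geodesic of slope $n/m$ in lowest terms is $p_m$ applied to a geodesic of integer slope $n$, so the relevant $r$ is the denominator $m$, not $\gcd(n,m)$ (which equals $1$ in lowest terms and would give nothing). These are correctable slips in normalization and indexing rather than structural gaps; with them fixed your argument coincides with the one in the paper.
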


Recall that chain shifts in $\cat{FK}^0(E^\tau)$ correspond to deck transformations $\alpha\to\alpha+1$.  By Serre duality and additivity, it is sufficient to define $\Phi$ on indecomposable coherent sheaves.  Moreover, with regards to $\Phi$'s image on objects, Atiyah's classification theorem (coupled with \ref{eq:isog}) tells us we only need to define $\Phi$ on only vector bundles of the form $A = \mL(\vphi)\otimes F(V,\exp(N))$ and indecomposable torsion sheaves, $S = S(x,V,N)$.  We will be able to treat morphisms similarly using the above diagram \ref{thm:isogmorph} and lemma \ref{thm:H0}. We will not repeat the proof that composition holds, this can be found in \cite{PZ,Kreuss}.\\

\bigskip\bigskip
\subsection{Mirror Objects}\hfill\\

If  $A = \mL(\vphi)\otimes F(V,\exp(N))$ for $N\in \End(V)$ nilpotent with nullity one and $\vphi=(t_{a\tau+b}^*\varphi_0)\cdot \varphi_0^{n-1}$, then $\Phi_\tau(A) = (L,\alpha,M)$, where
\begin{itemize}
 \item $L$ is the submanifold of $E^\tau$ with lift parameterized by $t\mapsto (a+t,(n-1)a + nt)$.
 \item $\alpha$ is the unique appropriate (i.e. $e^{i\pi\alpha}) = \frac{n+im}{\sqrt{1+n^2}}$) real number such that\\$-\frac{1}{2}<\alpha < \frac{1}{2}$.
 \item $M = e^{-2\pi i b}\exp(N)$.
\end{itemize}
So in other words we take this rank $k$-bundle to a null-graded Lagrangian of slope $n$ (the degree of $L(\vphi)$) and $y$-intercept at $-a$  ($x$-intercept at $a/n$) with a rank $k$ local system of monodromy $e^{-2\pi i b}\exp(N)$.\\

Notice how, as in the simple case, (B-side) translations in the real direction correspond to (A-side) shifts of the Lagrangian submanifold and (B-side) translations in the $\tau$-direction (A-side) shift  of the monodromy.\\

We extend $\Phi$ to all vector bundles by defining $\Phi(\pi_{r*}A) = p_{r*}\Phi_{r\tau(A)}$.  Notice that as $\Phi_r\tau(A)$ has an integer slope, $\rest{p_r}_L$ defines an isomorphism $L\arriso p_r(L)$.  In particular the only affect of $p_{r*}$ on $\Phi(A)$ (for $A$ the particular case above) is sending Lagrangian $L$ (given by line $y=nx+a/n$) to $p_r(L)$ (given by line $y=nx/r + ra/n$).\\

Now for the final case of an indecomposable torsion sheaf.  Let $S = S(-a\tau - b,V,N)$, then $\Phi_\tau(S) = (L,1/2,e^{-2\pi i b}\exp(N))$.  Where $L$ is the given by the vertical line with $x$-intercept $a$.\\

\bigskip\bigskip
\subsection{Morphisms of Vector Bundles}\hfill\\

For morphisms, we can again use additivity and compatibility with the shift functors to reduce to the case of defining 
$$\Phi_\tau:\Hom_{\db(\Et)}(A_1,A_2[n])\to\Hom_{\cat{FK}^0(\Ets)}(\Phi_\tau(A_1),\Phi_\tau(A_2)[n])$$ for any indecomposable coherent sheaves $A_1$ and $A_2$.  By our construction of the Fukaya category on the A-side, and the fact that we are on a curve on the B-side, both sides of this map vanish for $n\notin \{0,1\}$.  Using the isomorphisms then of Serre duality (lemmas \ref{lem:Bserre} and \ref{thm:sympserre}), it is sufficient to only consider the case when $n=0$.\\

Let $A_i = \mL(\vphi_i)\otimes F(V_i,\exp(N_i))$ be as in Atiyah's classification.  Using the isomorphism from lemma \ref{thm:H0}, we will define $\Phi$ as a map $\Hom(A_1,A_2)\cong H^0(\mL(\vphi_1\inv\vphi_2))\otimes \Hom(V_1,V_2)\to \Hom_{\cat{FK}^0(\Ets)}(\Phi(A_1),\Phi(A_2))$.\\
Using a basis $\theta\otimes f$ for this B-side space, we define $\Phi$ as follows (using the notation for section \ref{sec:simple}).  Let $\theta = t^*_{\delta_{21}\tau + \beta_{21}} f^{(n_2-n_1)}_k$, for $\delta_{21} := \frac{n_2x_2-n_1x_1}{n_2-n_1}$ and $\beta_{21}:= \frac{\beta_2 - \beta_1}{n_2-n_1}$.  We define 
$$\Phi(\Psi(\theta\otimes f)) = e^{\pi i \delta_{21}^2(n_1-n_2)}\exp[\delta_{21}(N_2-N_1^*-2\pi i (n_2-n_1)\beta_{21})]\cdot f\cdot e_k$$
for $e_k$ the corresponding intersection point, which is given by
$$e_k = \left(\frac{n_1x_1 - n_2x_2 + k}{n_1-n_2},\frac{n_1}{n_1-n_2}[n_2(x_1-x_2)+k]\right) \:\text{ for } k\in \bZ/(n_1-n_2)\bZ$$
By remark \ref{rmk:diagramsig}, this defines $\Phi$ on all vector bundle morphisms.\\

\bigskip\bigskip
\subsection{Morphisms of Torsion Sheaves}\hfill\\

First the mixed cases.  Recall that we can write any indecomposable torsion sheaf on $\Et$ in the form $S = S(-a\tau - b,V',N')$ as defined in section \ref{sec:EllDb}.  Let $S$ be such a torsion sheaf and $A = \mL(\vphi)\otimes F(V,\exp(N))$ a locally free sheaf as above with $\vphi = t_{\alpha\tau + \beta}^*\vphi_0\cdot (\vphi_0)^{n-1}$.  $\Hom(A,S) = 0$.  On the symplectic side we know the same is true as $\Phi(S)$ has $\alpha=1/2 >\alpha_A$ for $\alpha_A$ the grading of $\Phi(A)$.  $\Hom(A,S) = \Hom(V, V') = V^*\otimes V$.  The Lagrangians corresponding to $S$ and $A$ only intersect at a single point.  Thus we can define $\Phi:V^*\otimes V'\to V^*\otimes V'$ as the vector space operator
$$\Phi_\tau = e^{-\pi i\tau(na^2-2a\alpha)+ 2\pi i(a\beta+b\alpha-nab)}\cdot \exp[(na-\alpha)\bid_{V^*}\otimes N' + \alpha N^*\otimes\bid_V]$$

We extend to these morphisms to morphisms to arbitrary vector bundles by the following commutative diagram:
$$\xymatrix{
\Hom(A,\pi_r^*S) \ar[d]^{\Phi_{r\tau}}&\arriso& \Hom(\pi_{r*}A, S)\ar[d]^{\Phi_{\tau}}\\
\Hom(\Phi(A),\Phi(\pi_r^*S))\ar[d]^{\wr} &\arriso& \Hom(\Phi(\pi_{r*}A), \Phi(S))\ar[d]^{\wr}\\
\Hom(\Phi(A),p_r^*\Phi(S)) &\arriso& \Hom(p_{r*}\Phi(A), \Phi(S))
}$$

Now all that remains is to define morphisms between two indecomposable torsion sheaves.  Let $S_i = S(-a_i\tau - b_i,V_i,N_i)$.  Recall these sheaves each have supports at a single point.  To have nonzero morphisms, we need that point to be shared.  On the symplectic side we have two vertical lines with $x$-intercepts $a_i$ and monodromy operators $M_i = e^{-2\pi i b_i}\exp(N_i)$.  Thus we only have the trivial morphism unless $a_1=a_2$ in which case these lines are the same and\\
$$\Hom(\Phi(S_1),\Phi(S_2))=\{f\in \Hom(V_1,V_2)\stb f\circ M_1 = M_2\circ f\}$$

Also note that if $b_1\neq b_2$,  $M_1$ and $M_2$ do not share any eigenvalues (as $N$ is nilpotent it has no nonzero eigenvalues) and thus can not be conjugates of each other in the space of endomorphisms.\\

If $a_1\tau + b_1 = a_2\tau + b_2$, then 
\begin{align*}\Hom(S_1,S_2) &= \{f\in \Hom(V_1,V_2)\stb f\circ N_1= N_2\circ f\}\\
&= \{f\in \Hom(V_1,V_2)\stb f\circ M_1 = M_2\circ f\}\\
&= \Hom(\Phi(S_1),\Phi(S_2))\end{align*}

This completes our construction of $\Phi$.\\
\newpage
\appendix
\section{Homological Algebra Background}
In this appendix we will give a minimalist review of the homological algebra used in this thesis.  More thorough reviews of these subjects are given in \cite{Keller, ThDerived}.\\

\bigskip\bigskip
\subsection{Chain Complexes}\label{sec:chaincomp}\hfill\\

Before we start our discussion of derived categories let us  review some basic terminology used in homology theory.\\
Let $\cA$ be an abelian category.\footnote{A category is called \emph{abelian} if all its homsets are abelian groups and composition of morphisms gives a bilinear operator.  For example, the category of coherent sheaves on a manifold is abelian.}
\begin{defn}
A \emph{chain complex} in $\cA$, $(A_\bullet,d_\bullet)$, is a sequence of objects connected by morphisms (called boundary operators) $d_n:A_n\to A_{n-1}$ such that $d_n\circ d_{n+1} = 0$ for all $n$.\end{defn}

Associated to any chain complex is a homology, $H_n(A_\bullet) = \ker{d_{n-1}}/\im{d_n}$.\\

\begin{defn}
A \emph{chain map} between to chain complexes $(A_\bullet,d^A_\bullet)$ and $(B_\bullet,d^B_\bullet)$ is a sequence $f_\bullet$ of homomorphisms $f_n:A_n\to B_n$ such that $d^B_n\circ f_n = f_{n-1}\circ d^A_n$ for all $n$.\end{defn}

Chain maps induce maps between homologies. If the induced maps are isomorphisms, then the chain map is called a \emph{quasi-isomorphism}.  

\ex  Continuous maps induce chain maps on singular and smooth\footnote{Actually we only need a continuous map between topological spaces admitting a differential structure as de Rham cohomology cannot tell the difference between homeomorphic spaces.} maps induce cochain maps on de Rham cochain complexes.\\

\begin{defn}
For semantic convenience let us  also recall that a morphism between two graded structures $A_\bullet$ and $B_\bullet$ is said to be \emph{homogeneous\footnote{We will in general omit this word ``homogenous" when speaking of such maps as its implication is clear from the context.} of degree $k$} if it maps $A^{n}\to B^{n+k}$ for all $n$.\end{defn}

\ex Chain maps are degree 0 maps that preserve the boundary operator, which itself is defined as a degree -1 map that squares to zero.  Cochain maps are degree 0 maps that preserve the coboundary operator, which itself is defined as a degree 1 map that squares to zero.

\begin{defn}
We say two chain maps $f_\bullet:A_\bullet\to B_\bullet$ and $g_\bullet:A_\bullet\to B_\bullet$ are \emph{homotopic}\footnote{This name comes the fact that homotopic maps of topological spaces induce these maps in the case of singular chains.} if there exists some degree -1 map $h_\bullet:A_\bullet\to B_\bullet$ such that $f_\bullet-g_\bullet = d_B h_\bullet + h_\bullet d_A$ (i.e. $f^n-g^n = d_B^{n-1} h^n + h^{n+1}d_A^n$ for all $n$).\end{defn}

\begin{rem}
If two chain maps are homotopic, then they induce the same maps between homologies.\end{rem}

\bigskip\bigskip
\subsection{The Derived Category}\label{sec:DerivedCat}\hfill\\

\begin{defn}
Let $\cA$ be some abelian category and consider, $\cC\cA$ the \emph{category of chain complexes} in $\cA$, where objects are chain complexes (of objects in $\cA$) and morphism are chain maps.  The \emph{homotopy category} of $\cA$, denoted $\cat{HA}$, has the same objects as $\cA$, but we consider the morphisms up to homotopy equivalence as defined above.\end{defn}

Before defining the derived category of $\cA$, we need to discuss quasi-isomorphisms and the rough concept of ``localization".  Localization is, roughly speaking, the process of formally adding inverse elements to a subset of morphisms that we are ``localizing over".  For example, in algebraic geometry, we may, wishing to better understand the local geometry of a point (or subvariety) in some variety, localize our coordinate ring over complement, $S$, of the maximum (prime) ideal (of the coordinate ring, $R$) corresponding to that point (subvariety). For this example $S$ is closed under multiplication, which allows us to define this localization as the ring of fractions over $S$, $S\inv R: = \{\frac{r}{s}\stb s\in S \an r\in R\} = R\times S/~$, where $(r_1,s_1)~(r_2,s_2)\iff \exists t\in S$ such that $t(r_1s_2 - r_2s_1)$ ($t$ is necessary for transitivity).\\  

\begin{defn}
In general we define a quasi-isomorphism to be a morphism that descends to any isomorphism of homologies.  To be more specific (so we can define the derived category below), notice that the homology functor $H^n:\cat{CA}\to \cA$, which sends a chain (chain map) to its $n\ith$ object (morphism), descends naturally to a functor $\cat{HA}\to \cA$.  We will say a morphism\footnote{From now on we will drop the bullets when talking about complexes and their morphisms.} $s\in \Hom_\cat{HA}(A,B)$ is a \emph{quasi-isomorphism} if the induced morphism $H^n s:A\to B$ is invertible for all $n$.\end{defn}

Below we will define the derived category of $\cA$ to be the localization of $\cat{HA}$ over the space of quasi-isomorphisms.  First we a few observations about quasi-isomorphisms.

\begin{lem}\hfill\\
\begin{enumerate}
\item[a)]  Isomorphisms and compositions of quasi-isomorphisms are quasi-isomorphism.\\
\item[b)]  Each diagram $A'\xleftarrow{s} A\xrightarrow{f} B$ (or $A'\xrightarrow{f'} B'\xleftarrow{s'} B$) of $\cat{HA}$, for $s$ (resp. $s'$) a quasi-isomorphism, can be embedded into a commutative square:\\
$${
\xymatrix{
A \ar[d]_s \ar[r]^f &B\ar[d]^{s'}\\
A' \ar[r]_{f'} &B'}
}$$

\item[c)]  Let $f\in \text{Mor}(\cat{HA})$.  Then there exists a quasi-isomorphism $s$ such that $sf=0$ if and only if there exists a quasi-isomorphism $t$ such that $ft=0$.
\end{enumerate}\end{lem}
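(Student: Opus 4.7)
The plan is to verify that quasi-isomorphisms form a (multiplicative/Ore) localizing system in $\cat{HA}$, making essential use of the fact that $\cat{HA}$ is naturally a triangulated category with shift $[1]$ and distinguished triangles coming from mapping cones. The key fact throughout is that a morphism $s$ in $\cat{HA}$ is a quasi-isomorphism if and only if its mapping cone $\mathrm{Cone}(s)$ is acyclic, which follows from the long exact sequence in homology associated to the distinguished triangle $A \xrightarrow{s} A' \to \mathrm{Cone}(s) \to A[1]$.

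For part (a), both statements are immediate from the functoriality of $H^n$. An isomorphism in $\cat{HA}$ is sent by $H^n$ to an isomorphism for each $n$, and if $s,t$ are quasi-isomorphisms then $H^n(s \circ t) = H^n(s) \circ H^n(t)$ is a composition of isomorphisms.

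For part (b), I would construct $B'$ as the homotopy pushout of the span $A' \xleftarrow{s} A \xrightarrow{f} B$, realized concretely as a mapping-cylinder complex with degree $n$ term $A'^n \oplus A^{n+1} \oplus B^n$ and differential whose restriction to the middle summand encodes both $s$ and $-f$ (together with $-d_A$). Taking $s':B\to B'$ and $f':A'\to B'$ to be the obvious inclusions, the square commutes in $\cat{HA}$ via the explicit homotopy given by the middle $A$ factor. That $s'$ is a quasi-isomorphism follows from an identification $\mathrm{Cone}(s') \simeq \mathrm{Cone}(s)$, which is acyclic since $s$ is. The dual case $A' \xrightarrow{f'} B' \xleftarrow{s'} B$ is strictly analogous, using a homotopy pullback (mapping path-space) construction.

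Part (c) uses the triangulated structure. Suppose $s:B\to B'$ is a quasi-isomorphism and $sf = 0$ in $\cat{HA}$. The distinguished triangle $F \xrightarrow{j} B \xrightarrow{s} B' \to F[1]$, with $F := \mathrm{Cone}(s)[-1]$, gives upon applying $\Hom_{\cat{HA}}(A,-)$ the exactness needed to produce $g:A \to F$ with $jg = f$. Complete $g$ to a triangle $A' \xrightarrow{t} A \xrightarrow{g} F \to A'[1]$. Then $ft = jgt = 0$, and the long exact sequence $H^{n-1}(F) \to H^n(A') \to H^n(A) \to H^n(F)$ with $H^\bullet(F) = 0$ (from $s$ being a qis) forces $H^n(t)$ to be an isomorphism, so $t$ is a quasi-isomorphism. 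The converse is strictly dual, swapping the roles of cones and fibers. The main obstacle will be in (b), where the mapping-cylinder construction requires care with signs to simultaneously make the square commute up to a specified homotopy and to yield the identification $\mathrm{Cone}(s') \simeq \mathrm{Cone}(s)$; everything else is essentially formal triangulated-category manipulation.
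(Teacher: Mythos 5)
Your proposal is correct, and it is essentially the argument the paper defers to: the paper gives no proof of its own, citing section 1.6 of Kashiwara--Schapira, where exactly this cone-based, triangulated-category reasoning is used (part (c) in particular is the standard rotation-of-triangles argument you give). Your only deviation is cosmetic: in (b) you build the homotopy pushout $\mathrm{Cone}\bigl(A\xrightarrow{(s,-f)}A'\oplus B\bigr)$ explicitly rather than completing triangles via the TR axioms, which works since the contractible summand $\mathrm{Cone}(\mathrm{id}_B)$ splits off degreewise, giving $\mathrm{Cone}(s')\simeq\mathrm{Cone}(s)$.
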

\begin{proof}A proof of this can be found in section 1.6 of \cite{kashiwara2002sheaves}.\end{proof}

\begin{rem}{Notice (a) is true in general, but (b) and (c) hold due to homotopy equivalence.}\end{rem}

Now on to defining the title of this section.\\
\begin{defn}
The \emph{derived category} of $\cA$, denoted $\cDA$, has the same objects $\cat{HA}$, but the morphisms, $Hom_\cDA(A,B)$, are given by pairs\footnote{These pairs are sometimes called \emph{roofs} or more appropriately, \emph{left fractions}.} $A\xrightarrow{f} B'\xleftarrow{s} B$, denoted $(f,s)$, of a morphism and quasi-isomorphism under the following equivalence.  We say $(f,s)\sim(f',s')$ if and only if there is a commutative diagram in $\cat{HA}$:\\
$$\xymatrix{
&B'\ar[d]&\\
A\ar[ur]^f\ar[r]^{f''}\ar[dr]_{f'}&B'''&B\ar[ul]_s\ar[l]_{s''}\ar[dl]^{s'}\\
&B''\ar[u]&
}$$
\end{defn}

We define composition of morphisms in $\cDA$ by $(f,s)\circ(g,t) = (g'f,s't)$, where $g'$ and quasi-isomorphism, $s'$ are defined by the following commutative diagram in $\cat{HA}$:\\
$${
\xymatrix{
&& C''&&\\
&B'\ar[ur]^{g'}&& C'\ar[ul]_{s'}&\\
A \ar[ur]^{f} &&B\ar[ul]_s\ar[ur]^g && C\ar[ul]_t}
}$$\\


%

\begin{lem}
Let $\cA$ be an abelian category.  Then, for any $X,Y\in \cA$ and $k\in\bZ$, there is a canonical isomorphism\\
$$\ext^k_\cA(X,Y) \cong \Hom_{\db\cA}(X,Y[k])$$
\end{lem}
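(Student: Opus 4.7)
The plan is to reduce the computation in the derived category to an explicit computation with a resolution, assuming the standard hypothesis that $\cA$ has enough injectives (the dual argument works if $\cA$ has enough projectives). First I would fix an injective resolution $Y \to I^\bullet$, i.e.\ a quasi-isomorphism from $Y$ (viewed as a complex concentrated in degree $0$) to a bounded-below complex of injectives. By the definition of $\cDA$ as the localization at quasi-isomorphisms, this quasi-isomorphism induces an isomorphism $Y[k] \cong I^\bullet[k]$ in $\db\cA$, so the goal reduces to identifying $\Hom_{\db\cA}(X, I^\bullet[k])$ with the Yoneda/derived-functor description of $\ext^k(X,Y)$.

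The central lemma to prove is that for any complex $A^\bullet$ and any bounded-below complex of injectives $J^\bullet$, the natural map
\begin{equation*}
\Hom_{\cat{HA}}(A^\bullet, J^\bullet) \longrightarrow \Hom_{\db\cA}(A^\bullet, J^\bullet)
\end{equation*}
is a bijection. The argument uses the roof description from the preceding definition: given a left fraction $A^\bullet \xrightarrow{f} B^\bullet \xleftarrow{s} J^\bullet$ with $s$ a quasi-isomorphism, one lifts $f$ through $s$ by an inductive dimension-by-dimension extension, exploiting injectivity of each $J^n$ together with the fact that the mapping cone of a quasi-isomorphism is acyclic (this is the standard lifting argument, reducing each extension step to the existence of fills against short exact sequences, which is guaranteed by $\mathrm{Ext}^1$-vanishing into injectives). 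An analogous homotopy-lifting argument shows two chain maps become equal in $\db\cA$ iff they are already chain-homotopic. This step is the main technical obstacle, since it is the one place where injectivity (rather than just the formal properties of localization) is essential.

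Once the lemma is in hand, I would apply it to $A^\bullet = X$ (concentrated in degree $0$) and $J^\bullet = I^\bullet[k]$, obtaining
\begin{equation*}
\Hom_{\db\cA}(X, Y[k]) \;\cong\; \Hom_{\db\cA}(X, I^\bullet[k]) \;\cong\; \Hom_{\cat{HA}}(X, I^\bullet[k]).
\end{equation*}
The right-hand side is then computed directly: a chain map $X \to I^\bullet[k]$ is exactly a morphism $\varphi \in \Hom_\cA(X, I^k)$ with $d_I^k \circ \varphi = 0$, and a chain homotopy between two such maps is exactly a morphism $h \in \Hom_\cA(X, I^{k-1})$ with $\varphi_1 - \varphi_2 = d_I^{k-1} \circ h$. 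Therefore
\begin{equation*}
\Hom_{\cat{HA}}(X, I^\bullet[k]) \;=\; H^k\bigl(\Hom_\cA(X, I^\bullet)\bigr) \;=\; R^k\Hom_\cA(X, -)(Y) \;=\; \ext^k_\cA(X,Y),
\end{equation*}
where the last equality is the standard derived-functor definition of $\ext^k$. Finally, I would check that the composite isomorphism is independent of the chosen injective resolution (using that any two resolutions are related by a quasi-isomorphism unique up to homotopy, which again follows from the lifting lemma), yielding the canonical isomorphism claimed in the statement.
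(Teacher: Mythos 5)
Your proposal is correct (under the hypothesis you state), but it takes a genuinely different route from the paper, which offers no argument at all: the paper's ``proof'' is the remark that the isomorphism is true essentially by definition and is often taken as such, with a pointer to Keller's survey and to section I.6 of Hartshorne's \emph{Residues and Duality}. What you supply is the standard derived-functor justification of that remark: the key lemma that the localization map $\Hom_{\cat{HA}}(A^\bullet,J^\bullet)\to\Hom_{\db\cA}(A^\bullet,J^\bullet)$ is bijective whenever $J^\bullet$ is a bounded-below complex of injectives, followed by the identification $\Hom_{\cat{HA}}(X,I^\bullet[k])\cong H^k\bigl(\Hom_\cA(X,I^\bullet)\bigr)\cong\ext^k_\cA(X,Y)$, plus the independence-of-resolution check that makes the isomorphism canonical. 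The trade-off to be aware of: your argument requires $\cA$ to have enough injectives (or, dually, enough projectives) and reads $\ext^k$ as a right derived functor, whereas the lemma as stated is for an arbitrary abelian category, where $\ext^k$ must be interpreted as Yoneda Ext and the comparison with $\Hom_{\db\cA}(X,Y[k])$ is established directly (a $k$-extension of $X$ by $Y$ is converted into a roof), which is precisely why sources treat it as a definition. So your outline is complete and rigorous in the setting it covers --- which includes the paper's application, coherent sheaves on an elliptic curve --- but it proves a slightly less general statement than the one quoted; flagging the enough-injectives hypothesis in the lemma's statement, or adding a sentence reducing Yoneda Ext to derived-functor Ext when resolutions exist, would close that gap.
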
\begin{proof}This is true essentially by definition and is often taken as such.  An explanation can be found in \cite{Keller} or in section I.6 of \cite{HartResDua}.\end{proof}

%


\begin{rem}The derived category has a triangulated structure, which is often used in mirror symmetry.  We will not talk about this here.  For a review of the derived category of an abelian category and its triangulated structure, see \cite{Keller, ThDerived}.\end{rem}

\newpage
\section{Minimal, Calibrated, and Special Lagrangian Submanifolds}\label{sec:SL}

Below we will define special Lagrangians and a grading which can be placed on them.  First we will give some background by discussing minimal and calibrated submanifolds.\\

\bigskip\bigskip
\subsection{Minimal Submanifolds}\hfill\\

Roughly speaking, a minimal submanifold is a submanifold with the property that small deformations of its embedding do not affect its volume. We will be more specific below, but first let us review some basic notions from Riemannian Geometry.  For a more in depth discussion on these topics, see \cite{Xin,Hitch99,Hitch97}.\\
Let $S$ and $M$ be $n$ and $(n+k)$-dimensional smooth manifolds respectively.  An isometric immersion, $S\imm M$ gives us a smooth splitting of the tangent bundle of $M$ into parts tangent and normal to $S$, $\evat{TM}_S = TS\oplus NS$.  The \emph{second fundamental form} of $S\imm M$ is a symmetric bilinear $NS$-valued form on $S$ defined in terms of the Levi-Civita connections on $M$ and $S$ as $B(X,Y) := \nabla^M_XY - \nabla^S_XY$.\\
The \emph{mean curvature} of $S\imm M$ is then given by the average of the eigenvalues of the second fundamental form, $H = \frac{1}{n}\text{Trace}(B)$.\\
$S$ is said to be a \emph{totally geodesic submanifold} in $M$ if all geodesics in $S$ are geodesics in $M$.
This is true if and only if $B\equiv0$.\footnote{The less-trivial direction can be seen by observing that $B$ is symmetric and thus $B(X,Y)$ at a point in $S$ depends only on the values of $X$ and $Y$ at that point.}\\
$S$ is said to be a \emph{minimal submanifold} in $M$ if $H\equiv0$.\\

To see why such submanifolds are minimal in the sense described at the beginning of this section, recall the \emph{first variation formula} of Riemannian Geometry:\\\\
\begin{thm}Let $S$ be a compact Riemannian manifold and $f:S\imm M$ some isometric immersion with mean curvature $H$.  Let $f_t$ for $|t|<\epsilon$, $f_0=f$, be a smooth family of immersions satisfying $\evat{f_t}_{\partial S} = \evat{f}_{\partial S}$.  Let $V = \evat{\frac{\partial f_t}{\partial t}}_{t=0}$.  Then\\
$$\evat{\ddt \text{vol}(f_t (S))}_{t=0} = -\int_S <nH,\evat{\frac{\partial f_t}{\partial t}}_{t=0}>d\,\text{vol}.$$
\end{thm}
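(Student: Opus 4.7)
The plan is to express the volume as an integral over $S$, differentiate under the integral, and then massage the integrand using the Levi-Civita connection to expose the second fundamental form. Concretely, write $g_t := f_t^* g_M$ for the pulled-back metric on $S$ (with $g_0$ the original metric $g$ since $f$ is an isometric immersion). Then
\begin{equation*}
\operatorname{vol}(f_t(S)) \;=\; \int_S \sqrt{\det g_t}\;d\operatorname{vol}_g,
\end{equation*}
where $\det$ is computed in a $g$-orthonormal frame. Differentiating and using $\tfrac{d}{dt}\sqrt{\det g_t} = \tfrac{1}{2}\sqrt{\det g_t}\,\operatorname{tr}(g_t^{-1}\dot g_t)$, at $t=0$ we get
\begin{equation*}
\left.\ddt \operatorname{vol}(f_t(S))\right|_{t=0}
\;=\; \frac{1}{2}\int_S \operatorname{tr}_g(\dot g_0)\,d\operatorname{vol}.
\end{equation*}

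Next I would compute $\dot g_0$ in terms of $V$. Pick a local orthonormal frame $\{e_1,\dots,e_n\}$ for $TS$ and extend the $e_i$ (and $V$) smoothly to vector fields along $f_t$. Using the ambient Levi-Civita connection $\nabla^M$ and the fact that $[V,e_i]=0$ in this construction, one finds
\begin{equation*}
\dot g_0(e_i,e_j) \;=\; \langle \nabla^M_{e_i} V, e_j\rangle + \langle e_i, \nabla^M_{e_j} V\rangle,
\end{equation*}
so that $\tfrac{1}{2}\operatorname{tr}_g(\dot g_0) = \sum_i \langle \nabla^M_{e_i} V, e_i\rangle$.

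Now split $V = V^T + V^N$ into tangential and normal parts relative to $S$. For the tangential piece, $\sum_i \langle \nabla^M_{e_i} V^T, e_i\rangle = \sum_i \langle \nabla^S_{e_i} V^T, e_i\rangle = \operatorname{div}_S(V^T)$, since the second fundamental form is normal. For the normal piece, differentiate the identity $\langle V^N, e_i\rangle = 0$ in the direction $e_i$ to get $\langle \nabla^M_{e_i} V^N, e_i\rangle = -\langle V^N, \nabla^M_{e_i} e_i\rangle = -\langle V^N, B(e_i,e_i)\rangle$, because only the normal component of $\nabla^M_{e_i} e_i$ pairs nontrivially with $V^N$. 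Summing over $i$ gives $-\langle V^N, \operatorname{tr}(B)\rangle = -\langle V, nH\rangle$ (using that $H = \tfrac{1}{n}\operatorname{tr}(B)$ is normal, so pairing with $V^T$ vanishes).

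Assembling the pieces,
\begin{equation*}
\left.\ddt \operatorname{vol}(f_t(S))\right|_{t=0}
\;=\; \int_S \operatorname{div}_S(V^T)\,d\operatorname{vol} \;-\; \int_S \langle nH, V\rangle\,d\operatorname{vol}.
\end{equation*}
The divergence term vanishes by Stokes' theorem, since $\partial S$ is fixed by the variation forces $V^T\equiv 0$ on $\partial S$ (if $\partial S \neq \emptyset$) or $\partial S=\emptyset$ since $S$ is compact without boundary; in either case the boundary integral $\int_{\partial S}\langle V^T,\nu\rangle$ vanishes. This gives the claimed formula. The only place requiring care is the computation of $\dot g_0$ and the decomposition argument producing $B(e_i,e_i)$; everything else is Stokes and linear algebra.
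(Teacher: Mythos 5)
Your proof is correct and follows exactly the route the paper indicates: the paper's ``proof'' is only a two-line sketch (the derivative-of-determinant identity plus Stokes' formula, deferring details to the cited reference), and your argument is precisely that computation carried out in full, with the tangential/normal splitting of $V$ producing the divergence term and the $-\langle nH,V\rangle$ term. No gaps; the only care needed is the point you already flag, namely justifying $\dot g_0(e_i,e_j)=\langle\nabla^M_{e_i}V,e_j\rangle+\langle e_i,\nabla^M_{e_j}V\rangle$ via the symmetry of the ambient connection applied to the variation map on $S\times(-\epsilon,\epsilon)$.
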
\begin{proof}
This is straight-forward application of Stoke's formula and the fact that\\
$\evat{\ddt \det(A(t))}_{t=0} = \text{Trace}(A'(0))$.  See \cite{Xin} for a detailed proof.\end{proof}

\begin{rem} The R.H.S. of this equation gives the Euler-Lagrange equation $H=0$, which explains the above definition of a minimal submanifold.\end{rem}

\ex It's immediate (from the above theorem or definition) that one-dimensional minimal submanifolds are geodesics.\\


\bigskip\bigskip
\subsection{Calibrated Submanifolds}\hfill\\

\begin{defn}
Let $M$ be some Riemannian manifold and $\vol_V$ denote the induced volume form on any given subspace $V\subset T_xM$.  A $k$-form, $\eta$, is said to be a \emph{calibration} on $M$ if it is closed and, for all oriented $k$-dimensional subspaces $V\subset T_xM$, $\evat{\eta}_V = \lambda \text{vol}_V$ for some $\lambda\leq 1$.
We say a submanifold $N\hookrightarrow M$ is \emph{calibrated} with respect to calibration $\eta$ (or \emph{$\eta$-calibrated}) if for all $x\in N$, $\evat{\eta}_{T_xN} = \text{vol}_V$.\\
\end{defn}

\emph{Proposition:  }Let $(M,g)$ be a Riemannnian manifold, $\eta$ a calibration on $M$, and $N$ a compact $\eta$-calibrated submanifold of $M$.  Then $N$ is volume-minimizing in its homology class.\\
\begin{proof}
Let $[N] = [N']\in H_{dim(N)}(M)$
$$\int_N \mathrm{vol}_N = \int_N \eta = \int_{N'} \eta \leq \int_{N'} \mathrm{vol}_{N'}$$
\end{proof}

\rem{This tells us that the volume of calibrated submanifolds is unchanged by small variations, in other words, calibrated submanifolds are minimal submanifolds.}

\ex Any submanifold calibrated by a 1-form is a geodesic.\\

\bigskip\bigskip
\subsection{Special Lagrangian Submanifolds}\hfill\\

In order to define special Lagrangian submanifolds, we need to make the following observation.\\
Let $M$ be a (complex) $n$-dimensional Calabi-Yau manifold with Calabi-Yau form $\Omega$ and \kah{} form $\omega$.  The volume form induced by the \kah{} metric is then given by $\omega^n/n! = \frac{1}{n!}\sum dz$.\\
Recall that a Lagrangian submanifold is a (real) $n$-dimensional submanifold $L\hookrightarrow M$ such that $\rest{\omega}_L=0$.
Calabi-Yau manifolds are by definition compact and thus all holomorphic volume forms on them are locally constant.  We can certainly always scale $\Omega$ (uniquely up to phase) such that we have $\omega^n/n! = (-1)^{n(n-1)/2}(i/2)^n\Omega\wedge\bar\Omega$.  This scaled $\Omega$ is then the unique Calabi-Yau form which is a nontrivial calibration on $M$.  For any Lagrangian submanifold, $L\hookrightarrow M$,  $Vol_L = e^{i\pi\theta}\Omega_L$ for some $\theta:L\to \bR$.\footnote{This is easy to see in local holomorphic/Darboux coordinates.  $\rest{\Omega_L}\propto \prod_k (dx_k+idy_k) \neq 0$ if and only if $L$ is a Lagrangian submanifold.}  We will call this $\theta$, the phase of $L$ (see discussion in section \ref{sec:phaseL} below).
\begin{defn}
The \emph{special Lagrangian} submanifolds of $M$ are the Lagrangian submanifolds of $M$ such that $\theta$ is constant.\footnote{Sometimes special Lagrangian submanifolds are defined using a fixed phase.}
\end{defn}
Equivalently we could define them to be the submanifolds of $M$ calibrated by $\reo$ under some choice of phase (i.e. the phase of $L$).\\

\ex By the above discussions of calibrated and minimal submanifolds it is immediate that all special Lagrangian submanifolds of a (complex) 1-dimensional Calabi-Yau manifold (i.e. an elliptic curve) are geodesics.  It is not hard to see that the converse is also true as the geodesics are exactly the submanifolds of constant phase.  Some less trivial examples and a proof of the following theorem can be found in \cite{Xin}.\\

\begin{thm}A submanifold $L$ of $\bC^n = \bR^{2n}$ is both Lagrangian and minimal if and only if $L$ is  special Lagrangian.\end{thm}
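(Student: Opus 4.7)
The plan is to prove the two directions of the biconditional separately, with essentially all the work in the converse. The easy direction is that special Lagrangian implies minimal: if $L$ has constant phase $\theta_0$, then $L$ is calibrated by the closed real $n$-form $\mathrm{Re}(e^{-i\pi\theta_0}\Omega)$, and by the proposition above calibrated submanifolds are volume-minimizing in their homology class, hence critical points of the volume functional, i.e.\ minimal by the first variation formula. Lagrangianity is built into the definition of special Lagrangian, so the ``and Lagrangian'' half is automatic.

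For the converse --- Lagrangian plus minimal implies special Lagrangian --- the key ingredient is the classical formula relating the mean curvature vector of a Lagrangian submanifold in a Calabi--Yau manifold to the differential of its phase function:
$$\iota_H\,\omega \;=\; \pi\,d\theta$$
(up to a sign depending on the convention used for $\theta$). Granted this, minimality ($H\equiv 0$) forces $d\theta\equiv 0$ on $L$, so $\theta$ is locally constant and hence constant on each connected component of $L$; this is exactly the special Lagrangian condition.

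To prove the formula in the flat ambient $\bC^n$, I would argue pointwise. Fix $p\in L$ and pick an oriented orthonormal frame $(e_1,\dots,e_n)$ of $TL$ defined near $p$ with $\nabla^L e_k|_p = 0$. Because $L$ is Lagrangian in the K\"ahler manifold $\bC^n$, the vectors $(Je_1,\dots,Je_n)$ form an orthonormal frame of the normal bundle $NL$. Evaluating $\mathrm{vol}_L = e^{i\pi\theta}\,\Omega|_L$ on this frame gives $\Omega(e_1,\dots,e_n) = e^{-i\pi\theta}$ near $p$. Differentiate in a direction $X\in T_pL$: the right-hand side becomes $-i\pi\,e^{-i\pi\theta}\,d\theta(X)$, while the left-hand side, using that $\Omega$ is a constant form on $\bC^n$ and that $\nabla^{\bC^n}_X e_k = B(X,e_k)$ at $p$, reduces to $\sum_k \Omega(e_1,\dots,B(X,e_k),\dots,e_n)$. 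Expanding $B(X,e_k) = \sum_j h^j_{Xk}\,Je_j$ in the normal frame and using that $\Omega$ is $\bC$-multilinear and alternating --- so $\Omega(e_1,\dots,Je_j,\dots,e_n)$ with $Je_j$ in the $k$-th slot equals $i\,e^{-i\pi\theta}\,\delta_{jk}$ (the $j\neq k$ terms vanish by $\bC$-linear dependence) --- the left-hand side collapses to $i\,e^{-i\pi\theta}\sum_k h^k_{Xk}$. Equating the two sides and invoking the total symmetry of the second fundamental form on a Lagrangian in a K\"ahler manifold expresses $\pi\,d\theta(X)$ in terms of $\omega(H,X)$, which is the desired identity.

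The main obstacle is really just this local-frame calculation; everything else is bookkeeping. The three facts that drive it --- $\bC$-multilinearity of $\Omega$, the fact that $J$ sends $TL$ to $NL$ orthogonally for a Lagrangian, and the triple symmetry of $h^k_{ij}$ on a Lagrangian in K\"ahler --- essentially force the computation through. Signs and factors of $\pi$ depend on conventions for the phase but do not affect the final implication $H\equiv 0 \Rightarrow d\theta \equiv 0$, which is what we need.
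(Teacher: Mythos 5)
Your argument is correct, and it is essentially the standard Harvey--Lawson/Dazord proof; note that the paper itself does not prove this theorem at all but defers to the reference \cite{Xin}, so your write-up actually supplies the missing argument. The heart of it --- the pointwise frame computation showing $\iota_H\omega$ is proportional to $d\theta$, using $\bC$-multilinearity of $\Omega$, the fact that $J$ carries $TL$ isometrically onto $NL$ for a Lagrangian, and the full symmetry of the cubic form $\omega(B(X,Y),Z)$ --- is sound, and you are right that the sign and $\pi$ conventions are immaterial to the implication $H\equiv 0\Rightarrow d\theta\equiv 0$. Two small remarks. First, for the easy direction you invoke the proposition that calibrated submanifolds are volume-minimizing in their homology class, but that proposition is stated for \emph{compact} calibrated submanifolds, while closed minimal (or special Lagrangian) submanifolds of $\bC^n$ are never compact; one should either run the calibration comparison on compact pieces with fixed boundary, or, more economically, observe that your identity already gives this direction for free: $d\theta\equiv 0$ forces $\iota_H\omega\equiv 0$, hence $H\equiv 0$ by nondegeneracy of $\omega$, so a single computation proves both implications. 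Second, $\theta$ is only locally constant from $d\theta\equiv 0$, so the conclusion is ``constant on each connected component,'' which matches the definition of special Lagrangian under the usual convention that $L$ is connected (or that the phase is fixed componentwise).
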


\bigskip\bigskip
\subsection{Graded Lagrangian Submanifolds}\label{sec:phaseL}\hfill\\

$\theta$ is obviously unique only up to an integer shift (and well-defined, mod $\bZ$, only with respect to a fixed $\Omega$).  Given a fixed $\Omega$, in the case where $\theta$ is constant (i.e. $L$ is special), we call this choice the \emph{grading} of $L$.  A \emph{graded special Lagrangian submanifold} is then a corresponding pair $(L,\theta)$.  This is equivalent to the concept of grading used below in the example of homological mirror symmetry for elliptic curves.  The shift $(L,\theta)[n]:= (L\theta+n)$ mirrors the $n\ith$ shift functor on complexes of coherent sheaves.  Notice that the shift $(L,\theta)\mapsto (L\theta+1)$ is equivalent to reversing the orientation of $L$ induced by $\vol_L$ ($\vol_L\mapsto-\vol_L$ sends $e^{i\pi\theta}\to e^{-i\theta}$).\\

We can generalize this concept of grading to other Lagrangian submanifolds by defining the \emph{average phase} of $L$, $\phi(L)$, given modulo $\bZ$ by
$$\frac{\int_L\Omega}{\int_L \vol_L} = e^{i\pi\phi(L)}$$
This is clearly invariant under Hamiltonian deformations of $L$ (as this ratio is holomorphic and thus must be constant) and, assuming $L$ is homologous to a special Lagrangian submanifold, matches the restricted concept of grading given above.\\



\newpage

\bibliographystyle{hplain}
\bibliography{main}

\begin{thebibliography}{10}

\bibitem{Avbe}
M.F. {Atiyah}.
\newblock {Vector Bundles over an Elliptic Curve}.
\newblock {\em Proc. Lond. Math. Soc.}, s3-7(1):414--452, 1957.

\bibitem{ArrAntican}
D.~{Auroux}.
\newblock {Mirror symmetry and T-duality in the complement of an anticanonical
  divisor}.
\newblock June 2007, 0706.3207.

\bibitem{Ballard}
M.~R. {Ballard}.
\newblock {Meet homological mirror symmetry}.
\newblock January 2008, 0801.2014.

\bibitem{BottTu}
R.~Bott and L.W. Tu.
\newblock {\em Differential Forms in Algebraic Topology}.
\newblock Graduate Texts in Mathematics. Springer, 1982.

\bibitem{Calabi54}
E.~{Calabi}.
\newblock {The space of K\"{a}hler metrics}.
\newblock In {\em Proceedings of the International Congress of Mathematicians},
  pages 206--207, 1954.

\bibitem{Calabi57}
E.~{Calabi}.
\newblock {\em {On K\"{a}hler manifolds with vanishing canonical class}}, page
  78–89.
\newblock Princeton Mathematical Series. 1957.

\bibitem{CHS85}
P.~{Candelas}, G.~T. {Horowitz}, A.~{Strominger}, and E.~{Witten}.
\newblock {Vacuum configurations for superstrings}.
\newblock {\em Nuclear Physics B}, 258:46--74, 1985.

\bibitem{CL}
O.~{Cornea} and F.~{Lalonde}.
\newblock {Cluster Homology}.
\newblock August 2005, arXiv:math/0508345.

\bibitem{daSilva}
A.C. {da Silva}.
\newblock {\em Lectures on symplectic geometry}.
\newblock Lecture notes in mathematics. Springer, 2001.

\bibitem{Ef}
A.~I. {Efimov}.
\newblock {Homological mirror symmetry for curves of higher genus}.
\newblock {\em ArXiv e-prints}, July 2009, 0907.3903.

\bibitem{Farkas}
H.M. Farkas and I.~Kra.
\newblock {\em Riemann surfaces}.
\newblock Graduate texts in mathematics. Springer-Verlag, 1992.

\bibitem{Forster}
O.~Forster and B.~Gilligan.
\newblock {\em Lectures on Riemann Surfaces}.
\newblock Graduate Texts in Mathematics. Springer, 1981.

\bibitem{FuII}
K.~{Fukaya}.
\newblock {Floer Homology and Mirror Symmetry II}.
\newblock {\em Adv. Stud. Pure Math.}, 34:31–127, 2002.

\bibitem{FuAbel}
K.~{Fukaya}.
\newblock {Mirror symmetry of abelian varieties and multi-theta functions}.
\newblock {\em J. Algebraic Geom.}, (3):527--551, 2002.

\bibitem{FOOO2009}
K.~Fukaya, Y.-G. Oh, H.~Ohta, and K.~Ono.
\newblock {\em Lagrangian Intersection Floer Theory: Anomaly and Obstruction}.
\newblock American Mathematical Soc., 2009.

\bibitem{GHAlGeo}
P.~Griffiths and J.~Harris.
\newblock {\em Principles of Algebraic Geometry}.
\newblock Wiley Classics Library Wiley Classics Library. John Wiley \& Sons,
  2011.

\bibitem{gunning1967lectures}
R.C. Gunning.
\newblock {\em Lectures on Vector Bundles over Riemann Surfaces. (MN-6)}.
\newblock Mathematical Notes. Princeton University Press, 1967.

\bibitem{HartResDua}
R.~Hartshorne.
\newblock {\em Residues and Duality}.
\newblock Lecture Notes in Mathematics. Springer, 1966.

\bibitem{HartAlgGeo}
R.~Hartshorne.
\newblock {\em Algebraic Geometry}.
\newblock Graduate Texts in Mathematics. Springer, 1977.

\bibitem{HatcherAT}
A.~Hatcher.
\newblock {\em Algebraic Topology}.
\newblock Cambridge University Press, 2001.

\bibitem{HatcherVBK}
A.~{Hatcher}.
\newblock {\em Vector Bundles and K-Theory}.
\newblock 2009.

\bibitem{Hitch97}
N.~{Hitchin}.
\newblock {The moduli space of special Lagrangian submanifolds}.
\newblock November 1997, arXiv:dg-ga/9711002.

\bibitem{Hitch99}
N.~{Hitchin}.
\newblock {Lectures on Special Lagrangian Submanifolds}.
\newblock July 1999, arXiv:math/9907034.

\bibitem{clayMS}
K.~Hori, S.~Katz, A.~Klemm, R.~Pandharipande, R.~Thomas, C.~Vafa, R.~Vakil, and
  E.~Zaslow.
\newblock {\em Mirror Symmetry}.
\newblock Clay Mathematics Monographs. American Mathematical Society, 2003.

\bibitem{huybrechts2004complex}
D.~Huybrechts.
\newblock {\em Complex Geometry: An Introduction}.
\newblock Universitext (1979). Springer, 2004.

\bibitem{KKOY}
A.~{Kapustin}, L.~{Katzarkov}, D.~{Orlov}, and M.~{Yotov}.
\newblock {Homological Mirror Symmetry for manifolds of general type}.
\newblock {\em ArXiv e-prints}, apr 2010, 1004.0129.

\bibitem{kashiwara2002sheaves}
M.~Kashiwara and P.~Schapira.
\newblock {\em Sheaves on Manifolds: With a Short History "Les debuts de la
  theorie des faisceaux" by Christian Houzel}.
\newblock Grundlehren Der Mathematischen Wissenschaften. Springer, 2002.

\bibitem{Keller}
B.~{Keller}.
\newblock {Introduction to abelian and derived categories}.
\newblock In {\em Representations of Reductive Groups}, pages 41--62. Cambridge
  University Press, 1998,
  \url{http://www.math.jussieu.fr/~keller/publ/index.html}.

\bibitem{Ko94}
M.~{Kontsevich}.
\newblock {Homological Algebra of Mirror Symmetry}.
\newblock In {\em Proceedings of the International Congress of Mathematicians},
  1994, arXiv:alg-geom/9411018.

\bibitem{KoSoTorusFibrations}
M.~{Kontsevich} and Y.~{Soibelman}.
\newblock {Homological mirror symmetry and torus fibrations}.
\newblock November 2000, arXiv:math/0011041.

\bibitem{Kreuss}
B.~{Kreussler}.
\newblock {Homological Mirror Symmetry in Dimension One}.
\newblock December 2000, arXiv:math/0012018.

\bibitem{Miranda}
R.~Miranda.
\newblock {\em Algebraic Curves and Riemann Surfaces}.
\newblock Graduate Studies in Mathematics. American Mathematical Society, 1995.

\bibitem{morita2001geometry}
S.~Morita.
\newblock {\em Geometry of Characteristic Classes}.
\newblock Translations of Mathematical Monographs. American Mathematical
  Society, 2001.

\bibitem{MumTata}
D.~Mumford and C.~Musili.
\newblock {\em Tata Lectures on Theta Vol II}.
\newblock Modern Birkh{\"a}user classics. Birkh{\"a}user, 2007.

\bibitem{NN}
A.~Newlander and L.~Nirenberg.
\newblock Complex analytic coordinates in almost complex manifolds.
\newblock {\em The Annals of Mathematics}, 65(3):pp. 391--404, 1957.

\bibitem{Po}
A.~{Polishchuk}.
\newblock {A$_{¡Þ}$-Structures on an Elliptic Curve}.
\newblock {\em Communications in Mathematical Physics}, 247:527--551, 2004,
  arXiv:math/0001048.

\bibitem{PZ}
A.~{Polishchuk} and E.~{Zaslow}.
\newblock {Categorical Mirror Symmetry: The Elliptic Curve}.
\newblock January 1998, arXiv:math/9801119.

\bibitem{SeQuartic}
P.~{Seidel}.
\newblock {Homological mirror symmetry for the quartic surface}.
\newblock October 2003, arXiv:math/0310414.

\bibitem{SeGenus2}
P.~{Seidel}.
\newblock {Homological mirror symmetry for the genus two curve}.
\newblock {\em ArXiv e-prints}, December 2008, 0812.1171.

\bibitem{ThDerived}
R.~P. {Thomas}.
\newblock {Derived categories for the working mathematician}.
\newblock jan 2000, arXiv:math/0001045.

\bibitem{TY2002}
R.P. {Thomas} and S.-T. {Yau}.
\newblock {Special Lagrangians, stable bundles and mean curvature flow}.
\newblock December 2002, arXiv:math/0104197v3.

\bibitem{VarolinRS}
D.~Varolin.
\newblock {\em Riemann Surfaces by Way of Complex Analytic Geometry}.
\newblock Graduate Studies in Mathematics. American Mathematical Society, 2011.

\bibitem{WeylSymp}
H.~Weyl.
\newblock {\em The Classical Groups: Their Invariants and Representations}.
\newblock Princeton Landmarks in Mathematics and Physics. Princeton University
  Press, 1997.

\bibitem{Xin}
Y.L. Xin.
\newblock {\em Minimal Submanifolds and Related Topics}.
\newblock Nankai Tracts in Mathematics. World Scientific, 2003.

\bibitem{Yau77}
Shing-Tung Yau.
\newblock {Calabi's Conjecture and some new results in algebraic geometry}.
\newblock {\em Proc.Nat.Acad.Sci.}, 74:1798--1799, 1977.

\bibitem{Yau78}
Shing-Tung Yau.
\newblock On the ricci curvature of a compact k\"{a}hler manifold and the
  complex monge-ampére equation, i.
\newblock {\em Communications on Pure and Applied Mathematics}, 31(3):339--411,
  1978.

\bibitem{yau2010shape}
S.T. Yau and S.~Nadis.
\newblock {\em The Shape of Inner Space: String Theory and the Geometry of the
  Universe's Hidden Dimensions}.
\newblock Basic Books, 2010.

\end{thebibliography}

\end{document}